\theoremstyle{plain}
\newtheorem{theorem}{Theorem}[section]
\newtheorem{lemma}[theorem]{Lemma}
\newtheorem{proposition}[theorem]{Proposition}
\theoremstyle{definition}
\newtheorem{assumption}[theorem]{Assumption}
\newtheorem{definition}[theorem]{Definition}
\newtheorem{example}[theorem]{Example}
\theoremstyle{remark}
\newtheorem{remark}[theorem]{Remark}
\newcommand{\R}{\mathbb{R}}
\newcommand{\N}{\mathbb{N}}
\renewcommand{\bar}{\overline}
\renewcommand{\tilde}{\widetilde}
\renewcommand{\hat}{\widehat}
\newcommand{\Var}{\operatorname{Var}}
\newcommand{\Cov}{\operatorname{Cov}}
\numberwithin{equation}{section}
\def\Cov{{\rm Cov\,}}
\newcommand{\field}[1]{\mathbb{#1}}
\def\sig{\mathrm{sign}}
\def\H{{\mathbb H}}
\def\b{{\beta}}
\newcommand{\Z}{\field{Z}}
\newcommand{\Corr}{{\rm Corr}}
\def\<{\langle}
\def\>{\rangle}
\def\dTV{{d_{\mathrm{TV}}}}
\def\dW{{d_{\mathrm{W}}}}
\def\SS2{{\mathbb{S}^2}}
\def\E{{\mathbb{ E}}}
\def\P{{\mathbb{P}}}
\def\a{\alpha}
\def\1{\mathds 1}
\def\H{\mathbb H}
\def\HH{\mathbf H}
\begin{document}

\title{The multivariate fractional Ornstein-Uhlenbeck process}

\author[Ranieri Dugo]{Ranieri Dugo}
\address{
University of Rome Tor Vergata,
Department of Economics and Finance,  Via Columbia 2, 00133 Roma, Italy. 
}
\email{ranieri.dugo@students.uniroma2.eu}

\author[Giacomo Giorgio]{Giacomo Giorgio}
\address{
University of Rome Tor Vergata,
Department of Mathematics, 
Via della Ricerca Scientifica 1, I-00133 Roma, Italy}
\email{giorgio@mat.uniroma2.it}

\author[Paolo Pigato]{Paolo Pigato}
\address{
University of Rome Tor Vergata,
Department of Economics and Finance,  Via Columbia 2, 00133 Roma, Italy. 
}
\email{paolo.pigato@uniroma2.it}

\date{\today }

\begin{abstract}

Starting from the notion of multivariate fractional Brownian Motion introduced in [F. Lavancier, A. Philippe, and D. Surgailis. Covariance function of vector self-similar processes. Statistics \&
Probability Letters, 2009] we define a multivariate version of the fractional Ornstein--Uhlenbeck process. This multivariate Gaussian process is stationary, ergodic and allows for different Hurst exponents on each component. We characterize its correlation matrix and its short and long time asymptotics. Besides the marginal parameters, the cross correlation between one-dimensional marginal components is ruled by two parameters. We consider the problem of their inference, proposing two types of estimator, constructed from discrete observations of the process. We establish their asymptotic theory, in one case in the long time asymptotic setting, in the other case in the infill and long time asymptotic setting. The limit behavior can be asymptotically Gaussian or non-Gaussian, depending on the values of the Hurst exponents of the marginal components.
The technical core of the paper relies on the analysis of asymptotic properties of functionals of Gaussian processes, that we establish using Malliavin calculus and Stein's method. 
We provide numerical experiments that support our theoretical analysis and also suggest a conjecture on the application of one of these estimators to the multivariate fractional Brownian Motion.

\end{abstract}

\keywords{Fractional process, multivariate process, ergodic process, long-range dependence, cross-correlation, parameters inference, rough volatility}
\subjclass[2020]{Primary: 60G15, 62M09; secondary: 60G22; 62F12; 60F05.}
\maketitle

\newpage

\section{Introduction} 
In this paper, we define a multivariate version of the fractional Ornstein-Uhlenbeck process (fOU), i.e. the solution to a stochastic differential equation (SDE) with affine drift and constant volatility, driven by a fractional Brownian motion (fBm). We define each component of this multivariate process as the fOU solution to a one-dimensional SDE driven by one component of the multivariate fBm (mfBm) introduced by Lavancier et al. in \cite{lav09}, which allows for different Hurst exponents on each component and for non-trivial interdependencies. The resulting multivariate fOU (mfOU) is a multivariate stationary and ergodic fractional process, with smoothness/regularity degree that can be different on each component. This process has a richer correlation structure than that of the classic Markovian case, in the sense that the correlation between the $i-th$ and the $j-th$ components depends on $\rho_{ij}$, analogous to the correlation coefficient of the Markovian case, and also on a parameter $\eta_{ij}$ that rules the time-reversibility of the process, which is also inherited from the mfBm.

We propose a method of moments estimator for these correlation parameters, $\rho=(\rho_{ij})_{i,j=1,\dots, d}$ and $\eta=(\eta_{ij})_{i,j=1,\dots, d}$, based on discrete observations. We study consistency and the asymptotic law of the rescaled errors, which can be normal or non-normal, depending on the value of the Hurst parameters, as the number of equally spaced observations of the process goes to infinity. This estimator presupposes the knowledge of the parameters of the marginal one-dimensional fOU processes. Even if not ideal, this seems a reasonable setting since the problem of estimating a one-dimensional fOU has already been widely considered in the literature both in theory and practice. A potential problem with this approach is, for example, the estimation of the mean reversion parameter, since errors in the estimation of the marginal processes would propagate in the estimation of the cross-correlation parameters. For this reason, leveraging a short-time expansion of the cross-covariance function, we also propose a modified estimator for $\rho$ and $\eta$ that does not depend on the mean reversion parameters of the one-dimensional marginal processes, for which we show consistency in the infill and long-span asymptotic setting. We also show asymptotic normality for this estimator of $\rho$, for values of the Hurst parameters in a certain interval. Since for this estimator we consider the infill asymptotics, we refer to this second estimator as ``high frequency estimator'' to differentiate it from the first ``low frequency estimator''.

In the one-dimensional case, an analogous derivation leads to two estimators for the volatility parameter of the fOU process, for which we provide the asymptotic theory as above. In particular, one of these two estimators, although derived in a different way, closely resembles the estimator of the volatility of  volatility parameter used in \cite{GJR18}, which was implemented there as a regression.

Finally, we perform a Monte Carlo study on the mfOU to evaluate in finite samples the goodness of the asymptotic theory of our estimators. We also test our ``high frequency estimator'' on the mfBM (that is, the mfOU with vanishing mean reversion) finding that the estimator performs well in the ``rough'' case (i.e., when the Hurst parameters are not too large). Our numerical results are consistent with
what we expect from our theoretical asymptotics, thus confirming their validity and viability in practical applications.

\emph{Related work:} The one-dimensional fOU process has been widely studied, starting from the work by Cheridito et al. \cite{cheridito2003}. To define its multivariate version, we combine it here with the mfBM, introduced in \cite{lav09} and thoroughly investigated in \cite{multivar,AC11,CAA13}. The resulting multidimensional fOU process could be interpreted as the solution to a multivariate fractional SDE, but not in the sense of the standard theory, which assumes the same Hurst parameter in all components \cite{Nualart2002}. The technical mathematical core of our paper relies on Malliavin calculus and Stein's methods for Gaussian processes, for which we refer to \cite{MN24,N13,NP12,PN12,Nua}, and use them to analyse the asymptotic distribution of functionals of stationary Gaussian processes, following \cite{T75, DM79, M81, A94, A00}.

The fOU process is relevant in several applications, notably in volatility modelling, following the groundbreaking work on fractional volatility by \cite{CR98} and on rough volatility by \cite{GJR18}. For other applications of fOU to (rough) volatility modeling we refer to
\cite{GS17,GS18,GS20hedging,GPP23,horvath_jacquier_lacombe_2019,bayer2016pricing,BFGHS}
 and for electricity prices modeling to \cite{mishura2023gaussian, giordano2019fractional}.
Motivated by these applications, the problem of estimating fOU parameters has received considerable attention both in the mathematical \cite{HN10,CL21,KLB02,HNZ19,HH21,XZX11} and the econometrical \cite{Bolko_et_al,WXY23,ES20,BIANCHI2023} community, where a particularly important issue is the estimation of the Hurst regularity parameter \cite{CHLRS2,CHLRS1,FTW,LMPR}. 
In practical scenarios, before using our estimators for the cross-correlation parameters of a mfOU, one should estimate the parameters of the marginals fOUs, following these methods. 
When estimating the Hurst parameter on different log-volatilities time series, one finds different values, all consistently lower than $1/2$. Therefore, a realistic multivariate model for the log-volatility should allow for different values of the Hurst parameters on different components, a feature that can be accommodated by the mfOU process we are proposing here \cite{dgp.eco}, while the available literature \cite{T06,M09}, even in a fractional (rough) setting  \cite{SHI2023173,Rosenbaum_Tomas_2021}, assumes that the Hurst parameter is uniform over the components. 
Besides volatility modelling, multivariate (fractional) time series with a flexible cross memory structure have applications in 
econometrics, physics, physiology, genomics and other sciences \cite{PDHGS06,PDHGS01,WPHS11,PS}. Concerning time-reversibility, the topic has been widely considered in the financial literature, and in general from the point of view of stochastic processes, mostly in the unidimensional setting, see e.g. \cite{zumbach2009time,cordi}.

\medskip

Let us write $\overset{d}{\to}$ to denote convergence in distribution of random variables.
We also denote by $o(f(\alpha))$ a function $g(\alpha)$ such that $g(\alpha)/f(\alpha)\to 0$, as $\alpha\to \bar\alpha$, and by $O(f(\alpha))$ a function $g(\alpha)$ such that $g(\alpha)/f(\alpha)$ is bounded in a neighborhood of $\bar\alpha$.

\medskip

\emph{Outline:} In Section \ref{sec:properties} we define the mfOU process and establish its main properties. In Section \ref{sec:estimation} we propose two types of estimators for its correlation parameters, establishing their asymptotic theory. 
In Section \ref{sec:numerics} we test these results on simulations, confirming the validity of the asymptotic theory and exploring possible extensions. In Section \ref{sec:proofs} we collect proofs and technical material. In the appendixes we recall several useful results and techniques we use in the paper.

\section{Definition of the mfOU process and main properties}\label{sec:properties}
	In this section we introduce  the mfOU process and state its main properties. Our definition of the mfOU is based on the mfBm defined in \cite{AC11,multivar}, that we recall in the next section.

\subsection{The multivariate fractional Brownian motion}\label{sec:mfbm}
We start recalling the definition and some properties of the mfBm, which is fundamental for our work. 
\begin{definition}\label{fBm2}
	Fixed $d\in\N$, the $d$-variate fractional Brownian motion ($d$-fBm) $(B^{H_1}_t, \ldots, B^{H_d}_t)_{t\in \R}$ with $H_i \in (0,1)$ for $i=1,\ldots, d$, is a centered Gaussian process taking values in $\R^d$ such that:
	\begin{itemize}
		\item $B^{H_i}$, for $i=1,\ldots, n$, is a fBm with Hurst index $H_i \in (0,1)$;
		\item it is a self-similar process with parameter $(H_1, \ldots, H_d)$, i.e. 
		$$
		(B^{H_1}_{\lambda t},\ldots,  B^{H_d}_{\lambda t})_{t\in \R}= (\lambda^{H_1} B^{H_1}_t, \ldots, \lambda^{H_n} B^{H_d}_t)_{t\in \R}
		$$ 
		in the sense of finite-dimensional distribution. 
		\item it has stationary increments.
	\end{itemize}
\end{definition}
The cross-covariance functions of a $d$-fBm have a precise form that is described in \cite{lav09,multivar}.
\begin{theorem}\label{cov_nfBm}
	Let $(B^{H_1}_t, \ldots, B^{H_d}_t)_{t\in \R}$, $H_i \in (0,1)$ for $i=1,\ldots, d$, be the $d$-fBm in Definition \ref{fBm2}. The cross-covariance functions have the following representations: 
	\begin{itemize}
		\item for $i\neq j$, if $H_{ij}=H_i+H_j \neq 1$, there exist $\rho_{ij}=\rho_{ji}\in [-1,1]$ and $\eta_{ij}=-\eta_{ji}\in \R$ such that $\rho_{ij}=\mathrm{Corr}(B^{H_i}_1, B^{H_j}_1)$ and 
		\begin{align}\label{cH}
			\Cov(B_t^{H_i}, B_s^{H_j})=\frac{\sigma_i \sigma_j }{2}\big((\rho_{ij}+&\sig(t)\eta_{ij})|t|^{H_{ij}}+(\rho_{ij}-\sig(s)\eta_{ij})|s|^{H_{ij}} \notag\\
			&-(\rho_{ij}-\sig(s-t)\eta_{ij})|s-t|^{H_{ij}}\big)
		\end{align}
		where $\sigma_i^2 =\Var(B^{H_i}_1),\, \sigma_j^2 =\Var(B^{H_j}_1)$ and $\sig:\R \rightarrow \{-1, 1\}$ is given by $\sig(x)=1$ for $x\geq 0$ and $\sig(x)=-1$ for $x<0$;
		\item for  $i\neq j$, if $H_i+H_j = 1$ there exist $\rho_{ij}=\rho_{ji}\in [-1,1]$ and $\eta_{ij}=-\eta_{ji}\in \R$ such that $\rho_{ij}=\mathrm{Corr}(B^{H_i}_1, B^{H_j}_1)$ and
		\begin{align}\label{cH1}
			\Cov(B_t^{H_i}, B_s^{H_j})=\frac{\sigma_i \sigma_j }{2}\big(\rho_{ij}&(|s|+|t|-|s-t|)+ \notag\\
			&+\eta_{ij}(s\log|s|-t\log|t|- (s-t)\log|s-t|)\big) 
		\end{align}
		where $\sigma_i^2 =\Var(B^{H_i}_1),\, \sigma_j^2 =\Var(B^{H_j}_1)$.
	\end{itemize}
\end{theorem}
The covariance structure of the mfBm is subject to numerous constraints due to its joint self-similarity property. This characteristic has been thoroughly examined in a broader context in studies such as \cite{lav09}, followed by more specific investigations in \cite{multivar} and \cite{AC11}. From Theorem \ref{cov_nfBm} one can see that the covariance structure relies on $d^2$ parameters: $(\rho_{ij})_{i,j=1, i\neq j}^d \in [-1,1]$, $(\eta_{ij})_{i,j=1,i\neq j}^d\in \R$ and $(\sigma_i)_{i=1}^d>0$. Here, $\rho_{ij}$ represents the correlation between $B^{H_i}_1$ and $B^{H_j}_1$, forming a symmetric parameter ($\rho_{ij}=\rho_{ji}$). The parameter $\sigma_i$ denotes the standard deviation of $B_1^{H_i}$ while $\eta_{ij}$ is antisymmetric ($\eta_{ij}=-\eta_{ji}$) and linked to the time reversibility of the process, see Remark \ref{rm:timerev} for a definition. 
In \cite{multivar}, the authors also investigated specific parameter choices such as $(\eta_{ij})_{i,j}$ depending on $(\rho_{ij})_{i,j}$ or when $\eta_{ij}=0$ (the time reversible case). We refer to Appendix \ref{sec:app_mfbm} for other properties of the mfBm.

\subsection{Definition of the mfOU process and alternative formulations}
	Using the mfBm defined above and the standard notion of univariate fOU process (recalled in Appendix \ref{sec:1dimfou}), we can now introduce our notion of multivariate fOU process (mfOU).
		\begin{definition}\label{fou}
			Let $d\in\N$, $\a_i,\nu_i>0$ for $i=1,\ldots, d$ and $H_1,\ldots, H_d\in (0,1)$. A multivariate fOU process (mfOU) $Y=(Y^1_t, \ldots, Y^d_t)_{t\in \R}$ is a centered Gaussian process such that, for all $t\in \R$, 
			\begin{equation*}
				Y_t^i=\nu_i\int_{-\infty}^t e^{-\a_i(t-s)}dB_s^{H_i} \qquad{i=1,\ldots, d}
			\end{equation*}
			where $(B^{H_1,\ldots, H_d}_t)_{t\in\R}=(B_t^{H_1},\ldots, B_t^{H_d})_{t\in\R}$ is a mfBm as in Definition \ref{fBm2}, with variance at time $1$ set to $\sigma^2_{1}=\cdots=\sigma^2_{d}=1$ (see Theorem \ref{cov_nfBm}).
		\end{definition}
		The integrals above are meant path-wise, in the Riemann-Stieltjes sense.
		With this definition, each component satisfies separately (is the stationary solution to) the SDE driven by the one-dimensional fBm
		\begin{equation}\label{Langeq.multi}
			dY_t^i = - \alpha_{i}  Y^{i}_t dt + \nu_{i} d B_t^{H_{i}}, \quad i=1,\ldots, d 
		\end{equation}
		where differentials are again to be interpreted in the sense of path-wise integration in the Riemann-Stieltjes sense (see \cite{cheridito2003}). 
		If $H_i \in (0,1)\setminus  \{1/2\},\, i=1,\ldots, d$  , each process $Y^{i}$ also has the moving average representation
		\begin{align}\label{movAverY}
				&Y^i_{t}=\sum_{j=1}^d \int_{\R} K^j_i(t,s) W_j(ds),
			\end{align}
where $W$ is a $d$-dimensional white noise, meaning that $W=(W_1,\ldots, W_d)$ is a $d$-dimensional Gaussian white noise with zero mean, independent components and covariance given by $\E[W_i(ds)W_j(ds)]=\delta_{ij}ds$, and 		
\begin{align*}
				&K^j_i(t,s)= \nu_i\Big( M_{ij}^+((t-s)_+^{H_i-\frac 12}-(-s)_{+}^{H_i-\frac 12})+M_{ij}^-((t-s)_-^{H_i-\frac 12}-(-s)_{-}^{H_i-\frac 12}) -\\
				&-\a_i\int_{s}^t e^{-\a_i(t-u)} \Big(M_{ij}^+((u-s)_+^{H_i-\frac 12}-(-s)_{+}^{H_i-\frac 12})+M_{ij}^-((u-s)_-^{H_i-\frac 12}-(-s)_{-}^{H_i-\frac 12}) \Big)du,
			\end{align*}
			and $M_{{ij}}^{\pm}$ given in \cite{multivar}, $i,j=1,\ldots, d$.
This follows from Theorem \ref{MovAverageFBM} and some standard computations. From this representation follows that 
			\begin{align*}
				\Var(Y^i_t)=\sum_{j=1}^d \int_{\R} K_i^j(t,s)^2 ds <\infty,
			\end{align*}
			since $K_i^j(t, \cdot) \in L^2(\R)$ for all $t\in\R$, and so $K_i(t, \cdot)=(K_i^1(t, \cdot),\ldots, K_i^d(t, \cdot)) \in L^2(\R;\R^d)$. 
			
Therefore, \emph{$Y$ is a Gaussian process}.

		\subsection{The covariance function of the mfOU}\label{prop_2fou_section}
		Let us assume from now on that $H_i \in (0,1)\setminus  \{1/2\}, i=1,\ldots, d$.
	The following theorem completely characterizes the mfOU process (which is Gaussian) using mean and covariance. The same result is also formulated using correlation functions instead of covariances. Let us write $H_{ij}:=H_i+H_j$, $i,j=1,\ldots, d$, $i\neq j$. 
		\begin{theorem}\label{StatC}
			The process $Y=(Y^1, \ldots, Y^d)$ is strongly stationary with 
			$
			\E[Y_t]=(0,\dots,0).
			$
			For $i,j\in\{1,\ldots, d\}$ and $t>s$, denoting $I_{ij}(t-s):=\int_0^{t-s} e^{\a_i u} \Big(\int_{-\infty}^{0} e^{\a_j v} (u-v)^{H_{ij}-2} dv\Big) du$, the covariance function $r_{ij}(t-s)=\Cov(Y_t^i, Y_s^j)$, is given as follows:
			\begin{itemize}
				\item when $H_{ij}=H_i+H_j\neq 1$ 
				\begin{align}\label{eq:crosscov}
					r_{ij}(t-s)&=e^{-\a_i(t-s)}\Cov(Y_0^i, Y_0^j)+\nu_i \nu_j e^{-\a_i (t-s)}H_{ij}(H_{ij}-1)\frac{\rho_{ij}+\eta_{ij}}{2} I_{ij}(t-s), 
				\end{align}
where
\begin{equation}\label{lag_0}
			\Cov(Y_0^i,Y^j_0)=\frac{\Gamma(H_{ij}+1)\nu_i\nu_j}{2(\a_i+\a_j)}\Big( (\a_i^{1-H_{ij}}+\a_j^{1-H_{ij}})\rho+(\a_j^{1-H_{ij}}-\a_i^{1-H_{ij}})\eta_{ij}     \Big);
		\end{equation}
				\item when $H_{ij}=H_i+H_j=1$, 
				\begin{align*}
					r_{ij}(t-s)=e^{-\a_i (t-s)}\Cov(Y^i_0,Y^j_0)-\nu_i\nu_j e^{-\a_i(t-s)} \frac{\eta_{ij}}{2}I_{ij}(t-s),
				\end{align*}
where
\begin{equation}\label{1_lag_0}
			\Cov(Y_0^i,Y^j_0)=\frac{\nu_i \nu_j }{\a_i+\a_j}\Big( \rho_{ij} + \frac{\eta_{ij}}{2}(\log \a_j-\log \a_i)\Big).
		\end{equation}
			\end{itemize}
Alternative (more convenient) expressions for the case $i=j$ are given in \cite{cheridito2003} and \cite{GS18}. The correlation $\Corr(Y_t^i, Y_t^j)$ is given by
		\begin{itemize}
			\item for $H_{ij}=H_i+H_j\neq 1$,   
			\begin{align*}
				\Corr(Y_t^i &,Y^j_t)=\frac{\Gamma(H_{ij}+1)}{\sqrt{\Gamma(2H_i+1)\Gamma(2H_j+1)}}\Big(\frac{\a_i^{H_i}\a_j^{H_j}}{\a_i+\a_j} \Big) \Big( (\a_i^{1-H_{ij}}+\a_j^{1-H_{ij}})\rho_{ij}+(\a_j^{1-H_{ij}}-\a_i^{1-H_{ij}})\eta_{ij}     \Big);
			\end{align*} 
			\item for $H_{ij}=H_i+H_j=1$,
			\begin{align*}
				\Corr(Y_t^i &,Y^j_t)=\frac{1}{\sqrt{\Gamma(2H_i+1)\Gamma(2H_j+1)}}\Big(\frac{2\a_i^{H_i}\a_j^{H_j}}{\a_i+\a_j} \Big) \Big( \rho_{ij} + \frac{\eta_{ij}}{2}(\log \a_j-\log \a_i)\Big).
			\end{align*} 
		\end{itemize}
\end{theorem}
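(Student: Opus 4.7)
The plan is to reduce everything to covariance computations, since the moving-average representation \eqref{movAverY} already makes $Y$ Gaussian with mean zero, and stationarity of a centered Gaussian process is equivalent to translation invariance of its covariance. The translation invariance itself is essentially built in: because the mfBm has stationary increments, the kernel $K_i^j(t,u)$ in \eqref{movAverY} is a function of $t-u$ only, so
\[
\Cov(Y_t^i,Y_s^j)=\sum_{k=1}^d\int_{\R}K_i^k(t,u)\,K_j^k(s,u)\,du
\]
depends solely on $t-s$ after a translation of the integration variable. For the autocovariance $i=j$, each marginal $Y^i$ is, by construction, a standard one-dimensional fOU as in \cite{cheridito2003}, whose spectral representation is exactly the formula announced in the theorem; in particular $\Var(Y_0^i)=\nu_i^2\alpha_i^{-2H_i}\Gamma(2H_i+1)/2$, which will be used at the end.

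For the cross-covariance $i\neq j$ with $t>s$, the cleanest route is to apply the Langevin decomposition
\[
Y_t^i=e^{-\alpha_i(t-s)}Y_s^i+\nu_i\int_s^t e^{-\alpha_i(t-u)}\,dB_u^{H_i}
\]
and pair it with $Y_s^j=\nu_j\int_{-\infty}^s e^{-\alpha_j(s-v)}\,dB_v^{H_j}$. Using the Wiener-integral isometry for the mfBm one obtains
\[
r_{ij}(t-s)=e^{-\alpha_i(t-s)}\Cov(Y_0^i,Y_0^j)+\nu_i\nu_j\int_s^t\!\!\int_{-\infty}^s e^{-\alpha_i(t-u)-\alpha_j(s-v)}\,\partial_u\partial_v\gamma_{ij}(u,v)\,du\,dv,
\]
where $\gamma_{ij}(u,v)=\E[B_u^{H_i}B_v^{H_j}]$ is the explicit mfBm cross-covariance recalled in Appendix \ref{sec:mfbm}. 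On the integration domain $u\geq s>v$, so $u-v$ has constant positive sign and the mixed derivative of $\gamma_{ij}$ collapses to a single term proportional to $(u-v)^{H_{ij}-2}$ (to $(u-v)^{-1}$ in the boundary case $H_{ij}=1$); the change of variables $u\mapsto u+s$, $v\mapsto v+s$ pulls out the factor $e^{-\alpha_i(t-s)}$ and reproduces $I_{ij}(t-s)$, giving \eqref{eq:crosscov} and its $H_{ij}=1$ counterpart with the stated $(\rho_{ij},\eta_{ij})$ combination.

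The lag-zero formulas \eqref{lag_0} and \eqref{1_lag_0} are obtained by specializing the same double-integral identity to $t=s=0$. There $u$ and $v$ both range over $(-\infty,0]$, so $\mathrm{sgn}(u-v)$ is no longer constant; this is precisely why \emph{both} $\rho_{ij}$ and $\eta_{ij}$ survive and why the asymmetric combination $\alpha_i^{1-H_{ij}}-\alpha_j^{1-H_{ij}}$ appears, since interchanging the two half-domains swaps the roles of the two mean-reversion exponentials. Finally, dividing the lag-zero covariances by $\sqrt{\Var(Y_0^i)\Var(Y_0^j)}$ using the variance formula from the autocovariance case produces the announced correlation formulas, and the $H_{ij}=1$ variant is obtained in parallel with the logarithmic LPS kernel.

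The main technical obstacle is the cross-covariance step: one has to check that, despite the three sign-carrying terms in the mfBm covariance, the two derivatives, the sign of $u-v$ on the relevant domain, and the change of variables all combine to leave a clean power $(u-v)^{H_{ij}-2}$ weighted by precisely $(\rho_{ij}+\eta_{ij})/2$, and to keep track of the $|x|\log|x|$ corrections at $H_{ij}=1$. Once this identity is in place, the strong stationarity statement and the correlation formulas reduce to bookkeeping.
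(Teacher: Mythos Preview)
Your proposal is correct and follows essentially the same route as the paper: both arguments split $Y_t^i$ via the Langevin relation, pair the remainder against $Y_s^j$, and reduce the cross term to a double integral of $e^{\alpha_i u+\alpha_j v}$ against the second mixed derivative of the mfBm cross-covariance, which on the domain $u>v$ collapses to the single power $(u-v)^{H_{ij}-2}$ with the coefficient $H_{ij}(H_{ij}-1)(\rho_{ij}+\eta_{ij})/2$. The paper states this as an extension of Lemma~\ref{repLemma21} (justified through the pathwise representation \eqref{repY} rather than an abstract ``isometry''), and then writes the computation from the side $\Cov(Y_0^i,Y_s^j)$ before swapping indices, while you compute $\Cov(Y_t^i,Y_s^j)$ directly; the content is the same. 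One small remark: your phrase ``Wiener-integral isometry for the mfBm'' is loose, since the integrals here are pathwise Riemann--Stieltjes; the rigorous derivation goes through \eqref{repY}, expressing everything in terms of $B^{H_i}$ and $B^{H_j}$ and then using $\gamma_{ij}$ directly, exactly as the paper does.
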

The next results look at the asymptotic behavior of the cross-covariance $\Cov(Y^i_t, Y^j_{t+s})$, $i,j=1,\ldots, d$, $i\neq j$, when $s\to +\infty$ and when $s\to 0$. They will be key in the estimation of the cross-correlation parameters presented in Section \ref{sec:estimation}.
			\begin{theorem}\label{cov}
			Let $H_i\in (0,1)\setminus  \{1/2\}$,
			 $i,j\in\{1,\ldots, d\}$ and $H_{ij}=H_i+H_j\neq 1$, $i\neq j$  and $N\in \N$.
			  Then for fixed $t\in \R$, as $s\to \infty$ 
			\begin{align}\label{decCr}
				&\Cov(Y_t^i,Y_{t+s}^j)=\notag \\
				&=\frac{\nu_i\nu_j(\rho_{ij}+\eta_{ji})}{2(\a_i+\a_j)}\sum_{n=0}^N \Big(\frac{(-1)^n}{\a_j^{n+1}}+\frac{1}{\a_i^{n+1}}\Big)  \Big ( \prod_{k=0}^{n+1} (H_{ij}-k) \Big)s^{H_{ij}-2-n}+O(s^{H_{ij}-N-3}).
			\end{align}
		
			When $H_{ij}=1$ we have
			\begin{align}\label{decCr1}
				&\Cov(Y_t^i,Y_{t+s}^j)=\notag \\
				&=-\frac{\nu_i\nu_j\eta_{ij}}{2\a_i\a_j}\frac{1}{s}-\frac{\nu_i\nu_j\eta_{ij}}{2(\a_i+\a_j)}\sum_{n=1}^N \Big(\frac{(-1)^n}{\a_j^{n+1}}+\frac{1}{\a_i^{n+1}}\Big)  \Big( \prod_{k=0}^{n-1} (-k-1) \Big)s^{-1-n}+O(s^{-N-2}).
			\end{align}		\end{theorem}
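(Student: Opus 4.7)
\medskip
\noindent\textbf{Proof plan.} The approach is to start from the closed-form cross-covariance of Theorem~\ref{StatC} and asymptotically expand the double integral $I_{ji}(s)$ that drives its polynomial decay. By symmetry of the covariance and stationarity, $\Cov(Y^i_t, Y^j_{t+s}) = r_{ji}(s)$; applying Theorem~\ref{StatC} with the roles of $i$ and $j$ swapped reduces the question to an expansion of $e^{-\alpha_j s}I_{ji}(s)$, since the initial-value contribution $e^{-\alpha_j s}\Cov(Y^j_0, Y^i_0)$ decays exponentially and is absorbed into any polynomial remainder of order $O(s^{H_{ij}-N-3})$. Setting $g(u) := \int_0^\infty e^{-\alpha_i w}(u+w)^{H_{ij}-2}dw$ and substituting $u\mapsto s-v$ gives $e^{-\alpha_j s}I_{ji}(s) = \int_0^s e^{-\alpha_j v}g(s-v)\,dv$.

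The next step is to Taylor-expand $(s-v+w)^{H_{ij}-2} = s^{H_{ij}-2}(1+(w-v)/s)^{H_{ij}-2}$ via the binomial formula with remainder, substitute, expand $(w-v)^k = \sum_{l=0}^k\binom{k}{l}w^l(-v)^{k-l}$, and use the Gamma integrals $\int_0^\infty e^{-\alpha w}w^\ell dw = \ell!/\alpha^{\ell+1}$ (extending $\int_0^s$ to $\int_0^\infty$ in $v$ at the cost of an exponentially small error). This yields
\[
\int_0^s e^{-\alpha_j v}\int_0^\infty e^{-\alpha_i w}(w-v)^k dw\,dv \;=\; k!\sum_{m=0}^k \frac{(-1)^m}{\alpha_j^{m+1}\alpha_i^{k-m+1}} + (\text{exp.\ small}).
\]
Two clean identities then assemble the announced form. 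First, the algebraic telescoping
\[
(\alpha_i+\alpha_j)\sum_{m=0}^n \frac{(-1)^m}{\alpha_j^{m+1}\alpha_i^{n-m+1}} = \frac{(-1)^n}{\alpha_j^{n+1}} + \frac{1}{\alpha_i^{n+1}},
\]
verified by multiplying out and collapsing the resulting telescoping sum, converts the inner $m$-sum into the target numerator. Second, the falling-factorial collapse $H_{ij}(H_{ij}-1)\binom{H_{ij}-2}{n}n! = \prod_{k=0}^{n+1}(H_{ij}-k)$ absorbs the prefactor $\nu_i\nu_j H_{ij}(H_{ij}-1)(\rho_{ij}+\eta_{ji})/2$ from Theorem~\ref{StatC} into the product appearing in \eqref{decCr}. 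This produces the desired expansion.

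For $H_{ij}=1$ I would start from the alternative formula in Theorem~\ref{StatC}, whose prefactor is proportional to $\eta_{ij}/2$ and has no $H_{ij}(H_{ij}-1)$ factor. The same expansion applies verbatim, using $\binom{-1}{k}k! = (-1)^k k!$ in place of the falling-factorial collapse, and produces \eqref{decCr1}; the isolated leading $1/s$ term corresponds to the $n=0$ contribution, while the higher-order terms match the product $\prod_{k=0}^{n-1}(-k-1) = (-1)^n n!$ displayed in the statement.

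The hard part will be the uniform control of the Taylor remainder in $(1+(w-v)/s)^{H_{ij}-2}$, which is not pointwise small when $w-v$ is comparable to $s$. The saving grace is the pair of exponential weights $e^{-\alpha_i w - \alpha_j v}$, which localize $(w,v)$ on sets of order one: on the bulk region $\{w+v \leq s/2\}$ the standard binomial Taylor remainder estimate gives a contribution of order $O(s^{H_{ij}-N-3})$, while the complementary tail $\{w+v > s/2\}$ contributes only exponentially small terms by the Gaussian-type weights. Combining this with the exponentially small errors from extending $\int_0^s$ to $\int_0^\infty$ in $v$ and from the $e^{-\alpha_j s}\Cov(Y^j_0, Y^i_0)$ initial term yields the claimed $O(s^{H_{ij}-N-3})$ (respectively $O(s^{-N-2})$ when $H_{ij}=1$) remainder.
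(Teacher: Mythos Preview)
Your proposal is correct and takes a genuinely different route from the paper. The paper performs the change of variables $y=v-u,\ z=v+u$ in the double integral, integrates out $z$ explicitly, and reduces the problem to two one-dimensional integrals of incomplete-Gamma type, $e^{-\alpha_j s}\int_1^{\alpha_j s} y^{H_{ij}-2}e^{y}\,dy$ and $e^{\alpha_i s}\int_{\alpha_i s}^{\infty} y^{H_{ij}-2}e^{-y}\,dy$; it then quotes Lemma~2.2 of Cheridito--Kawaguchi--Maejima for the asymptotic expansions of these integrals. Your argument instead keeps the two-dimensional integral and Taylor-expands $(1+(w-v)/s)^{H_{ij}-2}$ directly, using the exponential weights $e^{-\alpha_i w-\alpha_j v}$ to localize to a bulk region $\{w+v\le s/2\}$ where the Taylor remainder is uniformly small, and showing the tail is exponentially negligible. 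Both collapse to the same falling-factorial and partial-fraction identities you wrote down. The paper's route is shorter because it outsources the asymptotics to a known lemma; your route is more self-contained and makes the remainder mechanism fully explicit, at the cost of having to handle the corner $v\approx s,\ w\approx 0$ (where $s-v+w\to 0$ and the integrand blows up for $H_{ij}<1$) --- this is covered by your tail estimate since that corner sits in $\{v>s/2\}$, but it is worth flagging as the one place where the ``exponentially small'' claim needs a line of justification (local integrability of $(s-v+w)^{H_{ij}-2}$ near the corner, times $e^{-\alpha_j s/2}$). A minor slip: the weights are exponential, not ``Gaussian-type''.
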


A consequence of Theorem \ref{cov} is the \emph{cross-covariance-ergodicity of the mfOUs process} (detailed proof in Section \ref{sec:proofs}).		
		\begin{lemma}\label{cross-ergod}
		The multivariate process $Y$ is cross-covariance ergodic, i.e. for all $\tau\in \R$ and $i,j\in\{1,\ldots, d\}$, $i\neq j$, 
				\[
		\hat r_{ij}(\tau):=\frac 1{2T} \int_{-T}^T Y_{t+\tau}^i Y_{t}^j dt 
\to \E[Y_{\tau}^i Y_0^j]
\]
in probability, as $T\to \infty$  (recall $\E[Y_t^i]=0$, $i=1,\ldots, d$, $t\in \R$).
		\end{lemma}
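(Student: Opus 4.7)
The plan is to upgrade convergence in probability to the stronger $L^2$ convergence: since $\E[\hat r_{ij}(\tau)] = \E[Y^i_\tau Y^j_0]$ by Fubini and the strong stationarity established in Theorem \ref{StatC}, it suffices to show that $\Var(\hat r_{ij}(\tau)) \to 0$ as $T \to \infty$. Expanding the variance and using Isserlis' formula for the centered Gaussian vector $(Y^i_{t+\tau},Y^j_t,Y^i_{s+\tau},Y^j_s)$, I obtain
\begin{equation*}
\Cov(Y^i_{t+\tau}Y^j_t,\, Y^i_{s+\tau}Y^j_s) = r_{ii}(t-s)\,r_{jj}(t-s) + r_{ij}(t-s+\tau)\,r_{ji}(t-s-\tau).
\end{equation*}
Substituting into $\Var(\hat r_{ij}(\tau))$ and reducing the double integral by the change of variables $u = t-s$ leads to
\begin{equation*}
\Var(\hat r_{ij}(\tau)) = \frac{1}{2T}\int_{-2T}^{2T}\Bigl(1-\frac{|u|}{2T}\Bigr)\bigl[r_{ii}(u)r_{jj}(u)+r_{ij}(u+\tau)r_{ji}(u-\tau)\bigr]\,du.
\end{equation*}

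Next I would invoke the decay estimates for the covariance functions. The marginal covariances $r_{ii}(u),r_{jj}(u)$ satisfy the classical fOU tail behavior $r_{kk}(u) = O(|u|^{2H_k-2})$ as $|u|\to\infty$, which is $o(1)$ since $H_k<1$. The cross covariances $r_{ij}(u\pm\tau),r_{ji}(u\mp\tau)$ tend to $0$ as $|u|\to\infty$ by Theorem \ref{cov}: they are $O(|u|^{H_{ij}-2})$ when $H_{ij}\neq 1$ and $O(|u|^{-1})$ when $H_{ij}=1$, in both cases vanishing at infinity because $H_{ij}<2$. Therefore the bracketed integrand is bounded on $\R$ and tends to $0$ as $|u|\to\infty$.

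Finally I would conclude by the standard Cesàro-type observation: if $f\colon\R\to\R$ is bounded and $f(u)\to 0$ as $|u|\to\infty$, then $\frac{1}{2T}\int_{-2T}^{2T}(1-|u|/(2T))f(u)\,du\to 0$ as $T\to\infty$ (split the integral at a large $M$ and use the uniform smallness of $f$ outside $[-M,M]$ against the trivial bound on $[-M,M]$). Applying this to the bracketed function gives $\Var(\hat r_{ij}(\tau))\to 0$, and $L^2$ convergence implies convergence in probability, yielding the claim.

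The only nontrivial ingredient is the decay of the cross covariance, which is supplied by Theorem \ref{cov}; everything else is stationarity, Isserlis, and a Cesàro argument. The main point worth being a little careful about is therefore to verify that the Theorem \ref{cov} expansions indeed apply to both of the shifted arguments $u\pm\tau$ uniformly in large $|u|$, which is immediate because the remainders there hold for $|s|$ large and $\tau$ is fixed.
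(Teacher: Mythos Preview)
Your proof is correct and follows essentially the same route as the paper: compute $\Var(\hat r_{ij}(\tau))$ via Isserlis' formula, reduce the double integral to a single integral in the lag variable, and use the long-time decay of the auto- and cross-covariances (Theorems \ref{Decay1} and \ref{cov}) to conclude that the variance vanishes. The one minor difference is in the final step: the paper splits into cases according to whether $H_{ij}<3/2$, $H_{ij}=3/2$, or $H_{ij}>3/2$ and extracts explicit rates $O(T^{-1})$, $O(T^{-1}\log T)$, $O(T^{2H_{ij}-4})$ for $\Var(\hat r_{ij}^T)$, whereas you use the cleaner Ces\`aro argument that only requires boundedness and $f(u)\to 0$. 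Your version is more economical for the lemma as stated; the paper's case analysis additionally yields the convergence rates, which are not needed here but foreshadow the finer analysis of Theorem \ref{VAR_SN}.
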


In the short-lag asymptotic setting, we have the following lemma.
		\begin{lemma}\label{shorttime}
	For all $t\in \R$ and $s\to 0$, when $H_{ij}=H_i+H_j\neq 1$ and $i,j\in\{1,\ldots, d\}$, $i\neq j$, we have that
			\begin{align*}
				&\Cov(Y_t^i, Y_{t+s}^j)\\
				&=\Cov(Y_0^i, Y_0^j)-\nu_i \nu_j \frac{\rho_{ij}-\eta_{ij}}{2}s^{H_{ij}} +\Big(-\a_j \Cov(Y_0^i, Y_0^j)+\a_i^{1-H_{ij}} \Gamma(H_{ij}+1) \nu_i\nu_j\frac{\rho_{ij}-\eta_{ij}}{2}\Big) s+ \\
				&+\frac{(\a_j-\a_i)\nu_i \nu_j }{H_{ij}+1}\frac{\rho_{ij}-\eta_{ij}}{2} s^{1+H_{ij}}+\Big(\frac{\a_j^2}{2} \Cov(Y_0^i, Y_0^j)-\frac 12\nu_i\nu_j \frac{\rho_{ij}-\eta_{ij}}{2} \Gamma(H_{ij}+1)(\a_j\a_i^{1-H_{ij}} -\a_i^{2-H_{ij}})\Big)s^2\\
				&+o(s^{\max\{2,1+H_{ij}\}}).
			\end{align*}
			When $H_{ij}=1$ and $i\neq j$ we have
			\begin{align*}
				&\Cov(Y_t^i, Y_{t+s}^j)=\Cov(Y_0^i, Y_0^j)-\nu_i\nu_j \frac{\eta_{ij}}{2} s\log s +o(s^2\log s).
			\end{align*}
		\end{lemma}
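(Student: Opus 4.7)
The plan is to derive the expansion from the closed-form covariance supplied by Theorem \ref{StatC} through a careful short-time asymptotic analysis. By stationarity and the symmetry of the covariance, $\Cov(Y^i_t, Y^j_{t+s}) = r_{ji}(s)$; applying Theorem \ref{StatC} with the indices $(i,j)$ swapped and using the mfBm identities $H_{ji}=H_{ij}$, $\rho_{ji}=\rho_{ij}$, $\eta_{ji}=-\eta_{ij}$, one obtains, for $H_{ij}\neq 1$,
\[
\Cov(Y^i_t, Y^j_{t+s}) = e^{-\alpha_j s}\Cov(Y^i_0, Y^j_0) + \nu_i \nu_j e^{-\alpha_j s}H_{ij}(H_{ij}-1)\frac{\rho_{ij}-\eta_{ij}}{2}I_{ji}(s),
\]
where $I_{ji}(s) = \int_0^s e^{\alpha_j u}J(u)\,du$ and $J(u) = \int_{-\infty}^0 e^{\alpha_i v}(u-v)^{H_{ij}-2}dv$. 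The task thus reduces to expanding $I_{ji}(s)$ for small $s$.

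The change of variables $t = \alpha_i(u-v)$ in the inner integral identifies $J(u) = \alpha_i^{1-H_{ij}} e^{\alpha_i u}\Gamma(H_{ij}-1,\alpha_i u)$, where $\Gamma(a,x)=\int_x^\infty e^{-t}t^{a-1}dt$ denotes the upper incomplete gamma function. The integration-by-parts identity $(H_{ij}-1)\Gamma(H_{ij}-1,A)=\Gamma(H_{ij},A)-A^{H_{ij}-1}e^{-A}$, combined with the power series of $\gamma(H_{ij},A)=\Gamma(H_{ij})-\Gamma(H_{ij},A)$ and of $A^{H_{ij}-1}e^{-A}$, yields a mixed integer/fractional-power expansion, valid uniformly for $H_{ij}\in(0,2)\setminus\{1\}$:
\[
\alpha_i^{1-H_{ij}}\Gamma(H_{ij}-1,\alpha_i u) = \alpha_i^{1-H_{ij}}\Gamma(H_{ij}-1) - \frac{u^{H_{ij}-1}}{H_{ij}-1} + \frac{\alpha_i u^{H_{ij}}}{H_{ij}} - \frac{\alpha_i^2 u^{H_{ij}+1}}{2(H_{ij}+1)} + O(u^{H_{ij}+2}),
\]
with $\Gamma(H_{ij}-1)$ understood via analytic continuation when $H_{ij}<1$. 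Multiplying by the Taylor series of $e^{\alpha_i u}$ gives the expansion of $J(u)$; then multiplying by $e^{\alpha_j u}$, integrating from $0$ to $s$, and finally multiplying by $e^{-\alpha_j s}=1-\alpha_j s+\alpha_j^2 s^2/2+O(s^3)$ produces the full expansion of $\Cov(Y^i_t, Y^j_{t+s})$. Collecting terms up to order $s^{\max\{2,1+H_{ij}\}}$ and simplifying via $H_{ij}(H_{ij}-1)\Gamma(H_{ij}-1)=\Gamma(H_{ij}+1)$ yields the stated coefficients of $s^0, s, s^{H_{ij}}, s^{1+H_{ij}}, s^2$; notably the compact coefficient $\alpha_j-\alpha_i$ of $s^{1+H_{ij}}$ emerges from a cancellation between the contribution of $\int_0^s e^{\alpha_j u}J(u)du$ and that of the $-\alpha_j s$ factor from $e^{-\alpha_j s}$.

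For the boundary case $H_{ij}=1$ the same strategy applies: the incomplete gamma degenerates to the exponential integral $\Gamma(0,\alpha_i u)=-\log(\alpha_i u)+(\textrm{entire in }u)$, and integrating in $u$ produces a $-s\log s$ leading correction in $I_{ji}(s)$, yielding the claimed logarithmic term. The main technical obstacle is pure bookkeeping: correctly interleaving the integer powers, the fractional powers $s^{H_{ij}+k}$, and (in the case $H_{ij}=1$) the logarithms through each multiplication and the integration, and simplifying the resulting rational functions of $H_{ij}$ to obtain the compact coefficients stated.
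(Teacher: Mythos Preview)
Your proposal is correct and follows essentially the same route as the paper: start from the exact cross-covariance of Theorem~\ref{StatC}, reduce the problem to expanding the double integral $I_{ji}(s)=\int_0^s e^{\alpha_j v}\int_{-\infty}^0 e^{\alpha_i u}(v-u)^{H_{ij}-2}\,du\,dv$ for small $s$, and then combine with the Taylor expansion of $e^{-\alpha_j s}$. The only presentational difference is that you recognise the inner integral as $e^{\alpha_i u}\alpha_i^{1-H_{ij}}\Gamma(H_{ij}-1,\alpha_i u)$ and invoke the standard small-argument expansion of the upper incomplete gamma function, whereas the paper carries out the same expansion by a direct change of variables and Taylor development without naming the special function; the resulting term-by-term computation and the final coefficients are identical.
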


\subsection{Comments}

\begin{remark}
In Definition \ref{fou}, when giving the moving average representation we exclude the possibility that $H_i=1/2$, for $i=1,\ldots, d$. We do so because in \cite{multivar} the moving average representation in \eqref{movAverY} is given for $H_i\neq 1/2$, for all $i\in 1,\ldots, d$. The authors conjecture that an analogous representation holds when there exists $i$ such that $H_i=1/2$, but this is not proved (see Remark 6 in \cite{multivar}).
\end{remark}

\begin{remark} \label{rm:timerev} 
The mfOU is time reversible if and only if $\alpha_{1}=\dots=\alpha_{d}$ and $\eta_{ij}=0$ for all $i,j=1,\dots,d$, as these conditions imply $r_{ij}(\tau)=r_{ji}(\tau)$ for all $i,j=1,\dots,d$.

We recall that time-reversibility amounts to temporal symmetry in the probabilistic structure of a strictly stationary process. More precisely, a process $(Z_t)_{t}$ is said to be time-reversible if the joint distributions of 
	$$
	(Z_t, Z_{t+\tau_1},\ldots, Z_{t+\tau_k})
	$$ 
	and 
	$$(Z_t, Z_{t-\tau_1},\ldots,Z_{t-\tau_k})$$ are equal for all $k\in \N$ and $\tau_1,\ldots, \tau_k \in \R$. 

\end{remark}

\begin{remark}
A special case of the mfOU process, named \emph{causal} because it depends only on the past realizations of the driving white noise, is obtained when the kernel that defines the moving average representation \eqref{movAverY} is characterized by $M_{ij}^{-}=0$. In this case, the cross-covariance depends only on one free parameter, say $\rho_{ij}$, and the other parameter $\eta_{ij}$ is directly deduced by the causality condition. See \cite{multivar} for details.
\end{remark}

		\begin{remark}
		Note that \eqref{Langeq.multi}, for $i=1,\dots,d$, can be read as a $d$-dimensional SDE driven by the mfBm in Section \ref{sec:mfbm}. However, fBm driven SDEs, in a multidimensional framework, have been considered only with a driving fBm noise with the same Hurst coefficient on each component ($H_{1}=H_{2}=\dots=H_{d}$), see e.g. \cite{Nualart2002}, while, to the best of our knowledge, SDEs driven by the mfBm in Section \ref{sec:mfbm} have never been considered. Since our Definition \ref{fou} of process $Y$ does not rely on this interpretation, we avoid further discussion on the topic here, and leave this for future work.
		\end{remark}

\begin{remark}
			Taking $\a_i=\a_j=\a$, $H_i=H_j$, $\nu_i=\nu_j$, $\rho_{ij}=1$ and $\eta_{ij}=0$ in Theorem \ref{cov} and Lemma \ref{shorttime} we recover the long-time asymptotics in the one-dimensional case  in Theorem \ref{Decay1} and the short-time asymptotics in the one-dimensional case in Lemma \ref{shorttime1dim}.
		\end{remark}

\section{Estimation of the correlation parameters}\label{sec:estimation}

In this section we consider the estimation of the cross-correlation parameters $\rho$ and $\eta$ based on discrete observations of $Y$. We assume the marginal one-dimensional parameters to be known, and we consider the correlation parameters between a fixed pair of marginal one-dimensional processes $Y^{1},Y^{2}$. To estimate the correlation parameters in the $d$-dimensional case, the procedure has to be repeated for all possible couples. Practically, in this way we have reduced the estimation problem of a $d$-dimensional fOU to the estimation of a bivariate fOU (2fOU). Therefore, from now on we consider a 2fOU with two correlation parameters $\rho=\rho_{12}=\rho_{21}$ and $\eta_{12}=-\eta_{21}$, that we aim to infer from discrete observations. We also denote $H=H_{12}=H_{1}+H_{2}$.

In Section \ref{Firstkind} we propose an estimator, obtained by inverting the expression for the cross-covariance in \eqref{eq:crosscov} for $\rho$ and $\eta$, as a function of the zero lag cross-covariance and the lagged cross-covariance. We develop an asymptotic theory in long time, relying on the ergodicity of the process. In Section \ref{Secondkind} we propose a variation of the estimator, obtained similarly, but instead of using the exact cross-covariance, we use the short-time approximation in Lemma \ref{shorttime}. The main practical difference in this approach is that the first terms in the asymptotic formula in Lemma \ref{shorttime}  do not depend on the mean reversion parameters $\alpha_{1},\alpha_{2}$. Therefore, one may  hope that this estimator is more robust to a poor estimation of $\alpha_{i}$ in the preliminary estimation of the one-dimensional marginal processes. Moreover, this estimator does not rely on the numerical integration for computing $I_{ij}(\tau)$, which can be delicate for certain choices of the parameters. To develop an asymptotic theory for this second estimator, since we leverage relation \eqref{shorttime}, we have to assume, as before, that the time horizon goes to infinity, but also that the time lag shrinks to $0$ (referred to as high frequency observations or infill asymptotics). To differentiate between these two type of estimator, we refer to the first one as  ``low frequency estimator'', and to the second one as ``high frequency estimator'', since we consider it in the infill asymptotic setting.

		\subsection{Low frequency estimator}\label{Firstkind}
		
		Let us consider the equations for the cross-covariance in \eqref{eq:crosscov} and \eqref{lag_0}. For fixed $t,s \in \R$, inverting the equations for $\rho\pm\eta_{12}$, recalling $\eta_{12}=-\eta_{21}$, we obtain
		\begin{align}\label{r+}
			&\rho+\eta_{12}=2\frac{\Cov(Y_{t+s}^1, Y_t^2)-e^{-\a_1 s}\Cov(Y_t^1, Y_t^2)}{\nu_1\nu_2 H(H-1)e^{-\a_1 s}I_{12}(s)}\\
			&\label{r-}
			\rho-\eta_{12}=2\frac{\Cov(Y_t^1, Y_{t+s}^2)-e^{-\a_2 s}\Cov(Y_t^1, Y_t^2)}{\nu_1\nu_2 H(H-1)e^{-\a_2 s}I_{21}(s)}.
		\end{align}
		Combining \eqref{r+} and \eqref{r-}, it follows that
		\begin{equation}\label{111}
			\rho=a_{1}(s) \,\Cov(Y_t^1,Y_t^2)+a_{2}(s)\,\Cov(Y_{t+s}^1,Y^2_t)+a_3(s)\,\Cov(Y^1_{t}, Y_{t+s}^2)
		\end{equation}
		and
		\begin{equation}\label{222}
			\eta_{12}=b_1(s)\,\Cov(Y_t^1,Y_t^2)+b_2(s)\,\Cov(Y_{t+s}^1,Y^2_t)+b_3(s)\,\Cov(Y_t^1, Y_{t+s}^2).
		\end{equation}
		where 
		\begin{align}\label{coefficients}
			&a_1(s)=-\frac{I_{12}(s)+I_{21}(s)}{\nu_1\nu_2H(H-1) I_{12}(s)I_{21}(s)},\\
			&a_2(s)=\frac{1}{\nu_1\nu_2  H(H-1)e^{-\a_1 s}I_{12}(s)},\notag\\
			&a_3(s)=\frac{1}{\nu_1\nu_2  H(H-1)e^{-\a_2 s}I_{21}(s)},\notag
		\end{align}
	and
		\begin{align}\label{coefficients2}
			&b_1(s)=\frac{I_{12}(s)-I_{21}(s)}{\nu_1\nu_2 H(H-1) I_{12}(s)I_{21}(s)},\\
			&b_2(s)=\frac{1}{\nu_1\nu_2 H(H-1)e^{-\a_1 s}I_{12}(s)},\notag\\
			&b_3(s)=-\frac{1}{\nu_1\nu_2 H(H-1)e^{-\a_2 s}I_{21}(s)}\notag.
		\end{align}

		One can also consider an analogous representation via correlations. The difference is not significant for the asymptotic theory that we develop in the present paper, but could be relevant from the point of view of applications. See next Remark \ref{rm:correlations} for details. 
Motivated by equations \eqref{111} and \eqref{222} and Lemma \ref{cross-ergod} ($Y$ is cross-covariance ergodic), we define the following estimators for $\rho$ and $\eta_{12}$ based on discrete observations, substituting the sample covariances to the theoretical ones.
		\begin{definition}\label{first_kind_estimator}
	Let $s\in \N$. Let us consider $Y_k=(Y_k^1, Y_k^2)$ for $k=0,\ldots, n$. We define 
		\begin{equation}\label{estRHO}
			\hat \rho_n=a_1(s)\,\frac{1}{n}\sum_{j=1}^n Y_j^1 Y_j^2+a_2(s)\,\frac{1}{n}\sum_{j=1}^{n-s }Y_{j+s}^1 Y_j^2+ a_3(s)\,\frac{1}{n}\sum_{j=1}^{n-s} Y_{j}^1 Y_{j+s}^2
		\end{equation}
		and
		\begin{equation}\label{estETA}
			\hat \eta_{12,n}=b_1(s) \,\frac{1}{n}\sum_{j=1}^n Y_j^1 Y_j^2+b_2(s)\,\frac{1}{n}\sum_{j=1}^{n-s} Y_{j+s}^1 Y_j^2+b_3(s)\,\frac{1}{n}\sum_{j=1}^{n-s} Y_{j}^1 Y_{j+s}^2.
		\end{equation}
	where $a_i(s), b_i(s), i=1,2,3$ are given in \eqref{coefficients} and \eqref{coefficients2}.
\end{definition}

		\begin{lemma}\label{bias_lemma}
			Let $n \in \N$. The estimators $\hat \rho_n$ in \eqref{estRHO} and $\hat \eta_{12, n}$ in  \eqref{estETA} are asymptotically unbiased estimators for $\rho$ and $\eta_{12}$ respectively, as $n\to \infty$. 
		\end{lemma}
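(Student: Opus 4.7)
The plan is to compute $\E[\hat\rho_n]$ and $\E[\hat\eta_{12,n}]$ directly and to show that these expectations converge, as $n\to\infty$, to the right-hand sides of \eqref{111} and \eqref{222}, which by construction equal $\rho$ and $\eta_{12}$ for any $s\in\N$.

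First I would exploit that $Y$ is a centered strongly stationary process (Theorem \ref{StatC}), so that for every integer $j$ and every lag $s$,
\begin{equation*}
\E[Y_j^1 Y_j^2]=\Cov(Y_0^1,Y_0^2),\qquad \E[Y_{j+s}^1 Y_j^2]=\Cov(Y_s^1,Y_0^2),\qquad \E[Y_j^1 Y_{j+s}^2]=\Cov(Y_0^1,Y_s^2).
\end{equation*}
Taking expectations term by term in \eqref{estRHO} and using the above identities, I obtain
\begin{equation*}
\E[\hat\rho_n]=a_1(s)\,\Cov(Y_0^1,Y_0^2)+a_2(s)\,\frac{n-s}{n}\,\Cov(Y_s^1,Y_0^2)+a_3(s)\,\frac{n-s}{n}\,\Cov(Y_0^1,Y_s^2),
\end{equation*}
and an analogous formula holds for $\E[\hat\eta_{12,n}]$ with the coefficients $b_i(s)$ in place of $a_i(s)$.

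Next, since $s\in\N$ is fixed, the edge-effect factor $(n-s)/n$ tends to $1$ as $n\to\infty$, so
\begin{equation*}
\E[\hat\rho_n]\longrightarrow a_1(s)\,\Cov(Y_0^1,Y_0^2)+a_2(s)\,\Cov(Y_s^1,Y_0^2)+a_3(s)\,\Cov(Y_0^1,Y_s^2),
\end{equation*}
and the right-hand side is exactly the representation \eqref{111} of $\rho$, obtained by the algebraic inversion of \eqref{eq:crosscov}--\eqref{lag_0}. The same argument, now invoking \eqref{222} and the coefficients in \eqref{coefficients2}, yields $\E[\hat\eta_{12,n}]\to\eta_{12}$.

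There is essentially no obstacle here: the argument is purely a first-moment computation using stationarity and the algebraic identities \eqref{111}--\eqref{222} established earlier. The only mild subtlety is the $(n-s)/n$ boundary correction, which vanishes in the limit because $s$ is kept fixed while $n\to\infty$; no ergodic or Malliavin machinery is needed for this statement (those enter only for the stronger consistency and CLT results treated later in the section).
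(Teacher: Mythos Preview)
Your proof is correct and follows essentially the same approach as the paper: compute the expectation directly using stationarity, obtain the $(n-s)/n$ edge-effect factor on the lagged terms, and let $n\to\infty$ to recover the representation \eqref{111} (respectively \eqref{222}). The paper's proof is line-for-line the same computation, with the same observation that the analogous argument handles $\hat\eta_{12,n}$.
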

		We consider now consistency and asymptotic normality for these estimators. For fixed $s$, let us fix three constant $a_{1},a_{2}, a_{3}$ and introduce
		\begin{equation}\label{nohatS_n}
		\begin{split}
S_n&=\frac {a_1}{n} \sum_{k=1}^n\Big(  Y_k^1Y_k^2-\E[Y_k^1Y_k^2]\Big)+\frac {a_2}{n} \sum_{k=1}^{n-s}\Big(Y_{k+s}^1 Y_k^2-\E[Y_{k+s}^1Y_k^2]\Big)\\
&+\frac {a_3}{n} \sum_{k=1}^{n-s}\Big( Y_k^1 Y_{k+s}^2 -\E[  Y_k^1 Y_{k+s}^2]  \Big).
\end{split}
\end{equation}

Remark that for $a_1=a_1(s)$, $a_2=a_2(s)$ and $a_3=a_3(s)$, with $a_1(s), a_2(s), a_3(s)$ in \eqref{coefficients}, one has $ S_n=\hat\rho_n-\rho$.  For $a_1=b_1(s)$, $a_2=b_2(s)$ and $a_3=b_3(s)$, with $b_1(s), b_2(s), b_3(s)$ in \eqref{coefficients2}, one has $ S_n=\hat\eta_{12,n}-\eta_{12,n}$. So, using $ S_n$, we can express the error of our estimators.

		\begin{theorem}\label{VAR_SN}
			Let $S_n$ be as in \eqref{nohatS_n}. Then, for $H<\frac 32$, 
	\begin{align*}
	&\lim_{n\to +\infty}	\Var(\sqrt{n} S_n)=\Var(a_1Y_0^1Y_0^2+a_2Y_{s}^1Y_0^2+ a_3 Y_0^1 Y_{s}^2)\\
	&+2\sum_{k=1}^{+\infty} \Cov(a_1Y_0^1Y_0^2+a_2Y_{s}^1Y_0^2+ a_3 Y_0^1 Y_{s}^2,a_1Y_k^1Y_k^2+a_2Y_{k+s}^1Y_k^2+ a_3 Y_k^1 Y_{k+s}^2)<+\infty.\\
\end{align*}
			For $H=\frac 32$, as $n\to \infty$, we have
			$$
			\Var(S_n)=O\Big(\frac{\log n}n\Big)
			$$
			and for $H>\frac 32$, as $n\to \infty$, we have
			$$
			\Var(S_n)=O\Big(\frac1{n^{4-2H}}\Big).
			$$
		\end{theorem}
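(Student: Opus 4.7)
The plan is to express $nS_n$ as a sum of a stationary sequence up to a negligible boundary term, apply Isserlis' formula to compute its autocovariance, and then distinguish the three regimes by elementary power-sum estimates.

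First, I would introduce for $k\in\Z$ the stationary sequence
\[
\widetilde Z_k := a_1\bigl(Y_k^1 Y_k^2 - \E[Y_0^1 Y_0^2]\bigr) + a_2\bigl(Y_{k+s}^1 Y_k^2 - \E[Y_s^1 Y_0^2]\bigr) + a_3\bigl(Y_k^1 Y_{k+s}^2 - \E[Y_0^1 Y_s^2]\bigr),
\]
and write $nS_n = \sum_{k=1}^n \widetilde Z_k + R_n$, where $R_n$ collects only $O(s)$ summands from the mismatched upper limits $n-s$ versus $n$, hence $\Var(R_n) = O(1)$. By Cauchy--Schwarz, $|\Cov(\sum \widetilde Z_k, R_n)| \le \sqrt{\Var(\sum \widetilde Z_k)\cdot \Var(R_n)}$, so with $\gamma(k) := \Cov(\widetilde Z_0, \widetilde Z_k)$ the problem reduces to the standard identity
\[
\Var\Bigl(\sum_{k=1}^n \widetilde Z_k\Bigr) = n\gamma(0) + 2\sum_{k=1}^{n-1}(n-k)\gamma(k).
\]

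Second, I would compute $\gamma(k)$ via Isserlis' formula. Since every summand of $\widetilde Z_k$ is a mixed product $Y^1\cdot Y^2$, each covariance appearing in the expansion has the form
\[
\Cov(Y_a^1 Y_b^2,\, Y_c^1 Y_d^2) = r_{11}(a-c)\, r_{22}(b-d) + r_{12}(a-d)\, r_{12}(c-b),
\]
i.e.\ only $r_{11}r_{22}$ or $r_{12}^2$ pairings arise, always at lags of the form $k + O(s)$. Invoking Theorem \ref{cov} for the decay $r_{12}(\tau) = O(|\tau|^{H-2})$ and the one-dimensional analogue $r_{ii}(\tau) = O(|\tau|^{2H_i - 2})$ from Theorem \ref{Decay1}, both types of pairing decay as $|k|^{2H-4}$, yielding the uniform bound
\[
|\gamma(k)| \le C (1+|k|)^{2H-4}.
\]

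Third, the three cases follow by elementary power-sum estimates. If $H<3/2$, then $2H-4<-1$, so $\sum_k |\gamma(k)|<\infty$, and dominated convergence in $\Var(\sqrt n S_n) = \gamma(0) + 2\sum_{k=1}^{n-1}(1-k/n)\gamma(k) + o(1)$ gives the limit $\gamma(0) + 2\sum_{k\ge 1}\gamma(k)$, which by bilinearity of $\Cov$ matches the expression in the statement. If $H=3/2$, the bound $|\gamma(k)|\le C/k$ gives $\sum_{k=1}^{n-1}(n-k)|\gamma(k)| = O(n\log n)$ and hence $\Var(S_n) = O(\log n/n)$. If $H>3/2$, setting $\beta=2H-4\in(-1,0)$, the estimate $\sum_{k=1}^{n-1}(n-k)k^\beta = O(n^{\beta+2})$ gives $\Var(S_n) = O(n^{2H-4}) = O(1/n^{4-2H})$.

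The main obstacle is the uniform bound $|\gamma(k)| \le C(1+|k|)^{2H-4}$. A priori one could fear Isserlis pairings of the form $r_{ii}(k)\,r_{12}(k) \sim k^{2H_i + H - 4}$, which would dominate $k^{2H-4}$ when $H_i > H_j$. The structural observation that rules them out is that $\widetilde Z_k$ contains only mixed products $Y^1\cdot Y^2$, so Isserlis' formula only couples $(Y^1,Y^1)$ with $(Y^2,Y^2)$ or $(Y^1,Y^2)$ with $(Y^1,Y^2)$, and every pairing balances the exponents to exactly $2H-4$. Once this bound is in hand, the rest is bookkeeping.
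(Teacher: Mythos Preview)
Your proposal is correct and follows essentially the same route as the paper: reduce to the stationary sum by absorbing the $O(s)$ boundary terms into a remainder with bounded variance, expand the autocovariance via Isserlis' formula to see that every pairing is either $r_{11}r_{22}$ or a product of two cross-covariances, both of order $|k|^{2H-4}$, and then split the three regimes by summability of $k^{2H-4}$. The only cosmetic point is that the cross-term should be written $r_{12}(\cdot)\,r_{21}(\cdot)$ rather than $r_{12}^2$, since in general $r_{12}\neq r_{21}$ when $\eta_{12}\neq 0$; this does not affect the decay estimate or the conclusion.
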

The weak consistency of $\hat \rho_n$ and $\hat \eta_{12,n}$ follows from Theorem \ref{VAR_SN}.
				\begin{theorem}\label{consistency}
			Let $n\in \N$ and  $\hat \rho_n$, $\hat \eta_{12,n}$ given in \eqref{estRHO} and \eqref{estETA}. Then, for any $H\in (0,2)$, $\hat \rho_n $ and $\hat \eta_{12,n}$ converge to $\rho$ and $\eta_{12}$ in $L^2(\P)$ and so in probability.
		\end{theorem}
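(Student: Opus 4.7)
The plan is to derive Theorem \ref{consistency} as a direct consequence of Lemma \ref{bias_lemma} (asymptotic unbiasedness) together with the variance bounds already collected in Theorem \ref{VAR_SN}, via the standard bias–variance decomposition for $L^{2}$ convergence.

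First I would write, for the estimator $\hat\rho_{n}$ (the argument for $\hat\eta_{12,n}$ is identical),
\begin{equation*}
\E[(\hat\rho_{n}-\rho)^{2}]=\Var(\hat\rho_{n})+\bigl(\E[\hat\rho_{n}]-\rho\bigr)^{2}.
\end{equation*}
The bias term tends to $0$ by Lemma \ref{bias_lemma}; in fact a direct inspection of \eqref{estRHO} and \eqref{111} shows, using stationarity, that $\E[\hat\rho_{n}]-\rho=-\tfrac{s}{n}\bigl(a_{2}(s)\E[Y_{s}^{1}Y_{0}^{2}]+a_{3}(s)\E[Y_{0}^{1}Y_{s}^{2}]\bigr)=O(1/n)$, so its square is $O(1/n^{2})$.

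Second, for the variance I would observe that, again by stationarity, the centered estimator $\hat\rho_{n}-\E[\hat\rho_{n}]$ coincides with the quantity $S_{n}$ defined in \eqref{nohatS_n} with coefficients $a_{i}=a_{i}(s)$; hence $\Var(\hat\rho_{n})=\Var(S_{n})$. Theorem \ref{VAR_SN} then yields in the three regimes:
\begin{itemize}
\item if $H<3/2$, $\Var(S_{n})=O(1/n)$;
\item if $H=3/2$, $\Var(S_{n})=O(\log n/n)$;
\item if $3/2<H<2$, $\Var(S_{n})=O(1/n^{4-2H})$, and the exponent $4-2H$ is strictly positive exactly on the range $H\in(0,2)$.
\end{itemize}
In each case $\Var(S_{n})\to 0$, and combining with the $O(1/n^{2})$ bias contribution gives $\E[(\hat\rho_{n}-\rho)^{2}]\to 0$. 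The same applies to $\hat\eta_{12,n}$ by taking $a_{i}=b_{i}(s)$ in \eqref{nohatS_n}. Convergence in probability then follows from convergence in $L^{2}(\P)$ via Markov's inequality.

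There is no real obstacle here, since the heavy lifting has already been done in Theorem \ref{VAR_SN}; the only point that requires a moment of care is the verification that $\hat\rho_{n}-\E[\hat\rho_{n}]$ is exactly $S_{n}$ (rather than $\hat\rho_{n}-\rho$, which is what naturally appears in the statement) and that the residual term $\E[\hat\rho_{n}]-\rho$ is of order $1/n$, so it does not compete with the variance rate in any of the three regimes of $H$. This confirms that the admissible range $H\in(0,2)$ claimed in the theorem is precisely the range on which the exponent $4-2H$ in Theorem \ref{VAR_SN} remains positive.
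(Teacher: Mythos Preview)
Your proposal is correct and follows exactly the same route as the paper: combine asymptotic unbiasedness (Lemma \ref{bias_lemma}) with the variance bounds of Theorem \ref{VAR_SN} via the bias--variance decomposition to obtain $L^{2}$ convergence. Your write-up is in fact more careful than the paper's, which simply states these two ingredients in one sentence; in particular, your observation that $S_{n}=\hat\rho_{n}-\E[\hat\rho_{n}]$ (rather than $\hat\rho_{n}-\rho$, as the paper loosely writes just after \eqref{nohatS_n}) and that the discrepancy is the $O(1/n)$ bias is a genuine clarification.
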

From next Lemma \ref{gammaAn} also follows the convergence in distribution of $\sqrt{n}S_n$ to a Gaussian random variable, see the detailed proof in Section \ref{sec:proofs}.
		\begin{theorem}\label{S_n_conv}
			Let $H<	\frac 32$ and $N\sim\mathcal N(0,\sigma^2)$, where $\sigma^2=\underset{n\to+\infty}{\lim}\Var(\sqrt{n}S_n)\in (0,+\infty)$. Then, as $n\to \infty$, 
			$$
			\sqrt{n}S_n \overset{d}{\to} N. 
			$$
		\end{theorem}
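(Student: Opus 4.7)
The plan is to place $\sqrt n\, S_n$ in the second Wiener chaos and invoke the fourth--moment theorem for Gaussian functionals (Nualart--Peccati), which is the natural framework given the Malliavin--Stein setup already used throughout the paper. By the moving average representation \eqref{movAverY}, each marginal can be written as $Y^i_t = I_1(g^i_t)$ for $g^i_t=(K_i^1(t,\cdot),K_i^2(t,\cdot))\in L^2(\R;\R^2)$, hence
\begin{equation*}
Y^i_{t+\tau} Y^j_t - \E[Y^i_{t+\tau} Y^j_t] \;=\; I_2\bigl( g^i_{t+\tau}\,\widetilde\otimes\, g^j_t\bigr),
\end{equation*}
where $\widetilde\otimes$ is the symmetric tensor product. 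Summing the three pieces in \eqref{nohatS_n}, $\sqrt n\, S_n = I_2(f_n)$ for explicit symmetric kernels $f_n$, so the sequence lives in a fixed Wiener chaos.

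With this representation in hand, Theorem \ref{VAR_SN} already yields the variance convergence $\E\bigl[(\sqrt n S_n)^2\bigr] = \|f_n\|^2 \to \sigma^2 \in (0,\infty)$ under the assumption $H<3/2$. Thus by the fourth--moment theorem for the second chaos (or equivalently by Lemma \ref{gammaAn}, which appears to be the reformulation the authors rely upon), it suffices to prove the vanishing of the single non-trivial contraction, $\|f_n \otimes_1 f_n\| \to 0$. Expanding this norm produces a quadruple sum indexed by $k_1,k_2,k_3,k_4 \in \{1,\dots,n\}$ of products of four covariance factors of the form $r_{i_a i_b}(k_a - k_b + \varepsilon_{ab} s)$, divided by $n^2$, where each $(i_a,i_b,\varepsilon_{ab})$ ranges over the nine combinations arising from the three terms composing $S_n$.

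The key estimate is then obtained from the cross-covariance decay of Theorem \ref{cov}: $|r_{ij}(\tau)|\lesssim |\tau|^{H-2}$ as $|\tau|\to\infty$, together with the behaviour near the diagonal described by Lemma \ref{shorttime}. The resulting bound on the contraction is controlled by $n^{-2}\sum|k_a-k_b|^{2H-4}$-type sums, which are $o(1)$ precisely when $2H-4 < -1$, i.e.\ $H<3/2$, matching the regime of the theorem. The main obstacle will be the combinatorial bookkeeping of the nine cross-terms in $f_n\otimes_1 f_n$ and ensuring that off-diagonal contributions (where two pairs of indices are far apart but one pair is close) are handled correctly; conceptually, however, this is simply an upgrade of the variance estimate of Theorem \ref{VAR_SN} from a single covariance factor (summability for $H<2$) to a product of two (summability for $H<3/2$), and no genuinely new ingredient is needed beyond the already-established covariance asymptotics.
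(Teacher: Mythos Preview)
Your proposal is correct and follows the paper's approach essentially verbatim: represent $\sqrt n\,S_n$ (up to the negligible boundary correction $\overline S_n-S_n$) as a double Wiener--It\^o integral $I_2(\theta_n)$, use Theorem~\ref{VAR_SN} for the variance convergence, and then invoke the fourth-moment theorem via the contraction criterion $\|\theta_n\otimes_1\theta_n\|\to 0$, which is exactly the content of Lemma~\ref{gammaAn}. The only point where your sketch is slightly loose is the heuristic ``$n^{-2}\sum|k_a-k_b|^{2H-4}$-type sums'': the four covariance factors actually carry \emph{different} exponents ($2H_1-2$, $2H_2-2$, or $H-2$ depending on whether the factor is $r_{11}$, $r_{22}$, or $r_{12}/r_{21}$), and when $\max\{H_1,H_2\}>1/2$ the direct integral comparison fails and the paper instead rewrites the cyclic sum as a discrete convolution and applies Young's inequality with carefully chosen exponents---but since you correctly defer to Lemma~\ref{gammaAn} for this step, there is no gap.
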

From Theorem \ref{S_n_conv}, follows the asymptotic normality of the estimators.
		\begin{theorem}\label{CLT_hat_rho_eta}
		Assume $H<\frac 32$.
			Let $\hat \rho_n$ in \eqref{estRHO} and $\hat \eta_n$ in \eqref{estETA}. Let $\sigma_\rho^2=\underset{n\to+\infty}{\lim}\Var(\sqrt{n}(\hat \rho_n-\rho))>0$ and $\sigma_\eta^2=\underset{n\to+\infty}{\lim}\Var(\sqrt{n}(\hat \eta_{12, n}-\eta_{12}))>0$. Then
	$$
			\sqrt{n}(\hat \rho_n-\rho)\overset{d}{\to} N_\rho
			$$
			and 
			$$
			\sqrt{n}(\hat \eta_{12,n}-\eta_{12})\overset{d}{\to} N_\eta
			$$
			where $N_\rho\sim \mathcal N(0,\sigma_\rho^2)$ and $N_\eta\sim\mathcal N(0,\sigma_\eta^2)$. Moreover 
			\begin{equation*}
				\sqrt{n}(\hat \rho_n- \rho, \hat \eta_{12, n}-\eta_{12}) \overset{d}{\to} (N_\rho, N_\eta)
			\end{equation*} 
			where $\Cov(N_\rho, N_\eta)=\lim_{n\to \infty} n\E[(\hat \rho_n-\rho)(\hat \eta_{12, n}- \eta_{12})]$.
		\end{theorem}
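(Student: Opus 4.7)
The plan is to deduce all three statements from Theorem \ref{S_n_conv}, exploiting the fact that $S_n$ defined in \eqref{nohatS_n} is \emph{linear} in the coefficient triple $(a_1,a_2,a_3)$, and then combining this with the Cram\'er--Wold device.

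For the two marginal statements, I would record that substituting $(a_1,a_2,a_3)=(a_1(s),a_2(s),a_3(s))$ into \eqref{nohatS_n} gives $S_n=\hat\rho_n-\rho$ (as noted after the definition of $S_n$), while substituting $(a_1,a_2,a_3)=(b_1(s),b_2(s),b_3(s))$ gives $S_n=\hat\eta_{12,n}-\eta_{12}$. Theorem \ref{S_n_conv} applied with each of these two choices (both of which have strictly positive limit variance, by hypothesis) yields
\[
\sqrt{n}(\hat\rho_n-\rho)\overset{d}{\to} N_\rho\sim\mathcal{N}(0,\sigma_\rho^2),\qquad \sqrt{n}(\hat\eta_{12,n}-\eta_{12})\overset{d}{\to} N_\eta\sim\mathcal{N}(0,\sigma_\eta^2).
\]

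For the joint convergence I would appeal to the Cram\'er--Wold device: it suffices to show, for every $(\lambda_1,\lambda_2)\in\mathbb{R}^2$, that the linear combination $\lambda_1\sqrt{n}(\hat\rho_n-\rho)+\lambda_2\sqrt{n}(\hat\eta_{12,n}-\eta_{12})$ converges in distribution to a centered Gaussian. By linearity this combination is itself $\sqrt{n}\,S_n$ for the coefficient triple $\lambda_1(a_1(s),a_2(s),a_3(s)) + \lambda_2(b_1(s),b_2(s),b_3(s))$, so Theorem \ref{VAR_SN} guarantees the existence and finiteness of the limit variance $v(\lambda_1,\lambda_2)$, and Theorem \ref{S_n_conv} delivers the required Gaussian limit whenever $v(\lambda_1,\lambda_2)>0$; degenerate directions $v(\lambda_1,\lambda_2)=0$ are handled by Chebyshev's inequality, which yields convergence in probability to zero. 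Expanding $v$ as a quadratic form in $(\lambda_1,\lambda_2)$ and polarizing, the cross term is
\[
\lim_{n\to\infty} n\,\E\bigl[(\hat\rho_n-\rho)(\hat\eta_{12,n}-\eta_{12})\bigr],
\]
which must therefore exist, be finite, and equal $\Cov(N_\rho,N_\eta)$, as stated.

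I do not anticipate a genuine technical obstacle: all the analytic work, namely the variance asymptotics of Theorem \ref{VAR_SN} and the Malliavin/Stein central limit theorem for $S_n$ contained in Theorem \ref{S_n_conv}, has already been done, and this proof is essentially a linear-algebraic packaging of those results via Cram\'er--Wold.
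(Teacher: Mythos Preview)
Your argument is correct. For the marginal statements you do exactly what the paper does: invoke Theorem~\ref{S_n_conv} with the two specific coefficient triples.

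For the joint convergence you take a different route from the paper. The paper observes that $\sqrt{n}(\hat\rho_n-\rho)$ and $\sqrt{n}(\hat\eta_{12,n}-\eta_{12})$ are both double Wiener--It\^o integrals (elements of the second chaos), verifies that the cross-covariance $n\,\E[(\hat\rho_n-\rho)(\hat\eta_{12,n}-\eta_{12})]$ converges, and then invokes the Peccati--Tudor theorem (Theorem~\ref{jointconvergence}), which says that for vectors of multiple integrals componentwise Gaussian convergence automatically upgrades to joint Gaussian convergence. You instead exploit the linearity of $S_n$ in $(a_1,a_2,a_3)$ to reduce every Cram\'er--Wold direction to another instance of Theorem~\ref{S_n_conv}, handling degenerate directions by Chebyshev. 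Your route is more elementary in that it avoids the chaos-specific Peccati--Tudor result, at the small cost of a case split on $v(\lambda_1,\lambda_2)$; the paper's route is the standard one in the fourth-moment literature and would continue to work even if the two components were not both of the form $S_n$ for a common functional. Here both approaches are equally short.
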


A result analogous to Theorem \ref{VAR_SN} holds in the case $H=3/2$, with rescaling given by $\sqrt{n/\log(n)}$ instead of $\sqrt{n}$ (see next Theorem \ref{CLT_hat_rho_eta32}). As a consequence we have the following results for our estimators.

\begin{theorem}\label{CLT_hat_rho_eta32}
		 Assume $H=\frac 32$.
			Let $\hat \rho_n$ in \eqref{estRHO} and $\hat \eta_n$ in \eqref{estETA}. Let $\sigma_\rho^2=\underset{n\to+\infty}{\lim}\Var(\sqrt{n/\log(n)}(\hat \rho_n-\rho))>0$ and $\sigma_\eta^2=\underset{n\to+\infty}{\lim}\Var(\sqrt{n/\log(n)}(\hat \eta_{12, n}-\eta_{12}))>0$.  Then
			$$
			\sqrt{\frac{n}{\log n}} (\hat \rho_n-\rho)\overset{d}{\to} N_\rho
		$$
		and
			$$
		\sqrt{\frac{n}{\log n}} (\hat \eta_{12,n}-\eta_{12})\overset{d}{\to} N_\eta
		$$
			where $N_\rho\sim \mathcal N(0,\sigma_\rho^2)$ and $N_\eta\sim\mathcal N(0,\sigma_\eta^2)$. Moreover 
			\begin{equation*}
				\sqrt{\frac{n}{\log n}}(\hat \rho_n- \rho, \hat \eta_{12, n}-\eta_{12}) \overset{d}{\to} (N_\rho, N_\eta)
			\end{equation*} 
			where $\Cov(N_\rho, N_\eta)=\lim_{n\to \infty} (n/\log n)\E[(\hat \rho_n-\rho)(\hat \eta_{12, n}- \eta_{12})]$
		\end{theorem}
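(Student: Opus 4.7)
The approach is to mirror the Malliavin--Stein / fourth moment strategy used to establish Theorem \ref{CLT_hat_rho_eta} in the regime $H<3/2$, adapting it to the critical rescaling $\sqrt{n/\log n}$ that is forced on us by the logarithmic decay rate in the variance of $S_n$ at $H=3/2$ (see Theorem \ref{VAR_SN}). Since each sample covariance appearing in $S_n$ is a centred quadratic functional of the Gaussian vector $(Y^{1},Y^{2})$, the random variable $\sqrt{n/\log n}\,S_n$ belongs to the second Wiener chaos of the Gaussian Hilbert space associated with $Y$, and can be written as $I_{2}(f_n)$ for a symmetric kernel $f_n$. The joint CLT will follow from the Peccati--Tudor multivariate fourth moment theorem for vectors of second--chaos variables, once we have (i) convergence of the rescaled covariance matrix and (ii) marginal CLTs, the latter being equivalent to the vanishing of the contractions $\|f_n\otimes_1 f_n\|$.

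\textbf{Step 1 (variance asymptotics).} First I would establish the analog of Theorem \ref{VAR_SN} at $H=3/2$, i.e.\ that $\frac{n}{\log n}\Var(S_n)$ converges to a finite constant. The key input is the long--lag expansion of Theorem \ref{cov}, which at $H=3/2$ gives $r_{ij}(k)\sim C_{ij}\,k^{H-2}=C_{ij}\,k^{-1/2}$ as $k\to\infty$. Expanding $\Var(S_n)$ as a bilinear form in covariances, the dominant contributions are sums of the form $n^{-2}\sum_{k,l=1}^{n}r_{ij}(k-l)r_{i'j'}(k-l)$, which at $H=3/2$ behave like $n^{-2}\sum_{k=1}^{n}(n-k)\,k^{-1}\sim \log n/n$. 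Tracking the constant in front yields the limits $\sigma_\rho^2$, $\sigma_\eta^2$, and also the cross--limit $\Cov(N_\rho,N_\eta)$, via the analogous bilinear computation with coefficients $a_i(s)\,b_j(s)$. Positivity of $\sigma_\rho^2,\sigma_\eta^2$ is part of the hypothesis.

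\textbf{Step 2 (marginal CLTs).} Once $\Var(I_2(f_n))\to\sigma^2\in(0,\infty)$, asymptotic normality follows from the Nualart--Peccati fourth moment theorem provided $\|f_n\otimes_1 f_n\|\to 0$. The squared contraction is a fourfold sum of products of covariances $r_{ij}$; after rescaling by $n/\log n$, it is bounded by a constant times $(\log n)^{-2}\,n^{-2}\sum_{k_1,\dots,k_4}\prod_{a\ne b}|r_{\cdot\cdot}(k_a-k_b)|$. Inserting the bound $|r_{ij}(k)|\lesssim k^{-1/2}$ at $H=3/2$, the resulting fourfold sum can be estimated by a standard grouping of pair differences; the outcome is a bound of order $(\log n)^{-1}=o(1)$. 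This delivers the marginal convergences $\sqrt{n/\log n}(\hat\rho_n-\rho)\overset{d}{\to}N_\rho$ and $\sqrt{n/\log n}(\hat\eta_{12,n}-\eta_{12})\overset{d}{\to}N_\eta$.

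\textbf{Step 3 (joint convergence) and main obstacle.} The vector $\sqrt{n/\log n}(\hat\rho_n-\rho,\hat\eta_{12,n}-\eta_{12})$ lies in a fixed chaos (the second), so by the Peccati--Tudor theorem the marginal CLTs from Step 2, together with the convergence of the covariance matrix from Step 1, imply the stated joint convergence to $(N_\rho,N_\eta)$. The main technical obstacle is exactly the borderline summability at $H=3/2$: the squared covariance $r_{ij}(k)^2$ is only marginally non--integrable, so both $\Var(S_n)$ and $\|f_n\otimes_1 f_n\|$ inherit logarithmic factors. Showing that the contraction norm grows \emph{strictly} slower (by one power of $\log n$) than the variance, so that $\|f_n\otimes_1 f_n\|/\|f_n\|^{2}\to 0$, is the delicate point, requiring one to retain and cancel logarithmic terms in the fourfold sum rather than absorb them into constants as one can safely do when $H<3/2$.
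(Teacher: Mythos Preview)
Your strategy is exactly the paper's: write the rescaled error as $I_2(\theta_n)$, verify $\Var(I_2(\theta_n))\to\sigma^2$ using the $k^{-1}$ behaviour of the products $r_{11}(k)r_{22}(k)$ and $r_{12}(k)r_{21}(k)$, show $\|\theta_n\otimes_1\theta_n\|\to 0$ via an integral bound on the fourfold sum, apply the fourth moment theorem for the marginals, and then Theorem~\ref{jointconvergence} for the joint statement.

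One point to correct: your claim that at $H=3/2$ one has $r_{ij}(k)\sim C_{ij}k^{-1/2}$ for \emph{all} $i,j$ is false for the autocovariances. Theorem~\ref{cov} covers only $i\neq j$; for $i=j$ you need Theorem~\ref{Decay1}, which gives $r_{ii}(k)\sim C_i k^{2H_i-2}$, and since $H_1+H_2=3/2$ forces $\max\{H_1,H_2\}\ge 3/4$, the bound $|r_{ii}(k)|\lesssim k^{-1/2}$ fails whenever $H_i>3/4$. What is true (and sufficient) is that each \emph{product} appearing in the variance and in the contraction has total exponent $2H-4=-1$, resp.\ $4H-8=-2$; this is what the paper exploits. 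Concretely, after rescaling one bounds
\[
\frac{1}{n^2(\log n)^2}\sum_{k_1,\ldots,k_4=1}^{n}\!\!\!|r_{11}(k_1-k_2)r_{22}(k_2-k_3)r_{11}(k_3-k_4)r_{22}(k_4-k_1)|
\le \frac{C}{(\log n)^2}\int_{[0,1]^4}\prod|y_i-y_{i+1}|^{2H_{\cdot}-2}\,dy,
\]
and the integral is finite because $H=3/2$ with $H_1,H_2\in(1/2,1)$ forces each exponent $2H_i-2>-1$. So the contraction is $O((\log n)^{-2})$, not $O((\log n)^{-1})$ as you wrote, and the ``delicate cancellation of logarithms'' you anticipate is not needed: once you normalise so that $\Var(I_2(\theta_n))\to\sigma^2$, the contraction simply goes to zero by this crude integral bound. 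Track the individual exponents rather than forcing a common $k^{-1/2}$ and the argument goes through as written.
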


When $H>\frac 32$ we have to consider a different rescaling of our estimators. Let us define 
	\begin{align}\label{newSn}
	\tilde S_n&=n^{2-H}\Big(\frac 1n\sum_{k=1}^n\big(  a_1Y_k^1Y_k^2+a_2Y_{k+s}^1 Y_k^2+a_3Y_k^1 Y_{k+s}^2 -\rho  \big)\Big)\notag\\
	&=n^{1-H}\sum_{k=1}^{n}\big(  a_1 Y_k^1Y_k^2+a_2Y_{k+s}^1 Y_k^2+a_3Y_k^1 Y_{k+s}^2 -\rho  \big).
\end{align}
 Adopting the approach outlined in \cite[\S 7.3]{N13}, we prove the following theorem. 
	\begin{theorem}\label{cumulantNC}
	Let $\tilde S_n$ be defined in \eqref{newSn} and $\kappa_p(\tilde S_n)$ be the cumulant of order $p$ of $\tilde S_n$. Let $a_1, a_2, a_3$ be such that $a_1+a_2+a_3\neq0$. Then, for all $p\geq 2$ 
	\begin{align*}
		&\lim_{n\to+\infty} \kappa_p(\tilde S_n)\\
		&=2^{p-1}(p-1)!\hat C_p(a_1, a_2, a_3)\sum_{i_1,\ldots, i_p=1}^2 \sum_{\underset{i_2\neq j_1,\ldots, i_p\neq j_{p-1}, i_1\neq j_p}{j_1,\ldots, j_p=1}}^2\int_{[0,1]^p}  z_{i_1 j_1}(x_1, x_2) z_{i_2 j_2}(x_2, x_3)\ldots z_{i_p j_p}(x_p, x_1) dx
	\end{align*}
	where 
	$$
	z_{11}(x,y)=  \frac{2\nu_1^2H_1(2H_1-1)}{\a_1^2} |x-y|^{2H_1-2}
	$$
	$$
	z_{22}(x,y)= \frac{2\nu_2^2H_2(2H_2-1)}{\a_2^2} |x-y|^{2H_2-2},
	$$
	$$
	z_{12}(x, y)=  \frac{2\nu_1\nu_2H(H-1) }{\a_1\a_2} \times \begin{cases}
		(\rho-\eta_{12})(x-y)^{H-2} \quad{x>y}\\
		(\rho+\eta_{12})  (y-x)^{H-2} \quad{x\leq y}
	\end{cases}
	$$
	$$
	z_{21}(x, y)=  \frac{2\nu_1\nu_2H(H-1) }{\a_1\a_2} \times \begin{cases}
		(\rho+\eta_{12})(x-y)^{H-2} \quad{x>y}\\
		(\rho-\eta_{12})  (y-x)^{H-2} \quad{x\leq y}
	\end{cases}
	$$
	and $\hat C_p(a_1, a_2, a_3)=(a_1+a_2+a_3)^p$. 
	Additionally
	\begin{align*}
		&\lim_{n\to +\infty} \frac{\Var(\tilde S_n)}{(a_1+a_2+a_3)^2}=\frac{64\Big((\rho^2-\eta_{12}^2)H^2(H-1)^2+4H_1H_2(2H_1-1)(2H_2-1)\Big)}{\a_1^{2}\a_2^{2}(2H-3)(2H-2)}>0.
	\end{align*}
\end{theorem}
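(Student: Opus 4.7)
The plan follows the strategy of \cite[\S 7.3]{N13}, based on the trace representation of cumulants of second-order Wiener chaos elements. The first step is to recognize that $\tilde S_n$ (up to a deterministic constant, which does not affect $\kappa_p$ for $p\ge 2$) is a centered quadratic Gaussian functional: writing $T_n = \sum_{k=1}^n (a_1 Y_k^1 Y_k^2 + a_2 Y_{k+s}^1 Y_k^2 + a_3 Y_k^1 Y_{k+s}^2)$, each summand is a product of two centered Gaussians with distinct superscripts, so $T_n-\E[T_n]$ admits the representation $\mathbf{Y}^\top A_n \mathbf{Y} - \E[\mathbf{Y}^\top A_n \mathbf{Y}]$, where $\mathbf{Y}$ is the Gaussian vector indexed by $(k,i)$ with $k\in\{1,\ldots,n+s\}$, $i\in\{1,2\}$, and $A_n$ is a sparse symmetric matrix whose nonzero entries are $a_1/2$, $a_2/2$ or $a_3/2$ at the positions encoding the three bilinear summands. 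The classical cumulant formula for centered quadratic Gaussian forms then yields
\begin{equation*}
\kappa_p(\tilde S_n) \;=\; n^{p(1-H)}\, 2^{p-1}(p-1)!\, \operatorname{tr}\bigl((A_n\Gamma_n)^p\bigr),
\end{equation*}
where $\Gamma_n$ is the covariance matrix of $\mathbf{Y}$.

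Next I would expand the trace cyclically. Since $A_n$ vanishes unless the two superscripts are complementary in $\{1,2\}$,
\begin{equation*}
(A_n\Gamma_n)_{(m,i),(l,j)} \;=\; \tfrac{1}{2}\Bigl[a_1\, r_{3-i,j}(m-l) + a_2\, r_{3-i,j}(m-l+\sigma^{(i)}_2) + a_3\, r_{3-i,j}(m-l+\sigma^{(i)}_3)\Bigr]
\end{equation*}
for appropriate shifts $\sigma^{(i)}_2, \sigma^{(i)}_3 \in\{-s,+s\}$. In the asymptotic regime $|m-l|$ of order $n$, the $O(1)$ shifts by $s$ contribute only lower-order terms and this quantity is asymptotic to $\tfrac{a_1+a_2+a_3}{2}\, r_{3-i,j}(m-l)$. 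Taking the product over the $p$ cyclic factors produces the announced constant $\hat C_p(a_1,a_2,a_3) = (a_1+a_2+a_3)^p$.

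The third step is the passage to a Riemann sum: setting $m_t = n x_t$, the long-lag asymptotics of Theorem~\ref{cov} (for off-diagonal covariances) and the one-dimensional fOU asymptotics recalled in Appendix~\ref{sec:1dimfou} (for $r_{11}$ and $r_{22}$) give $r_{ab}(n(x-y)) \sim n^{H_{ab}-2}\,\tilde c_{ab}(x,y)$, where $\tilde c_{ab}$ is the piecewise function which, after bookkeeping of the normalization, coincides with the functions $z_{i_m j_m}$ in the statement. The combinatorial identity $\sum_{t=1}^p H_{3-i_t,i_{t+1}} = pH$, valid for every $(i_t) \in \{1,2\}^p$, ensures that every term in the cyclic sum scales as $n^{p(H-1)}$, which precisely cancels the prefactor $n^{p(1-H)}$. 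The double sum $\sum_{(i_m)}\sum_{(j_m)}[j_m\ne i_{m+1}]$ in the statement is in bijection with the unconstrained sum over $(i_t)\in\{1,2\}^p$ via $(i_m,j_m)=(3-i_t,i_{t+1})$, so the cycle structure matches.

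The main technical obstacle is the rigorous passage from the discrete cyclic sum to the $p$-fold integral $\int_{[0,1]^p}\prod_t z_{i_t j_t}(x_t,x_{t+1})\,dx$. The integrand is singular on the diagonals $\{x_t = x_{t+1}\}$, and integrability holds precisely when $2H-3 > 0$, i.e.\ exactly the assumed regime $H>3/2$. One must control (i) the contribution of configurations in which some $|m_t-m_{t+1}|$ is small, so that the asymptotic $r_{ab}(\tau) \sim C|\tau|^{H_{ab}-2}$ fails, (ii) the error from replacing the $s$-shifted covariances by the unshifted leading term, and (iii) boundary effects near $m_t\in\{1,n\}$. For $p=2$, direct inspection of the four cases $(i_1,i_2)\in\{1,2\}^2$ decomposes the limiting variance into two ``crossing'' terms of type $z_{12}z_{12}$ and $z_{21}z_{21}$, producing the $(\rho^2-\eta_{12}^2)H^2(H-1)^2$ contribution, and two ``parallel'' terms of type $z_{11}z_{22}$, producing the $H_1H_2(2H_1-1)(2H_2-1)$ contribution; each integral reduces to $\int_{[0,1]^2}|x_1-x_2|^{2H-4}\,dx = 2/((2H-3)(2H-2))$, and assembling the numerical factors yields the stated asymptotic variance.
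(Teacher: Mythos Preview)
Your approach is essentially the same as the paper's. The quadratic-form cumulant formula $\kappa_p=2^{p-1}(p-1)!\operatorname{tr}((A_n\Gamma_n)^p)$ and the paper's Wiener--chaos formula $\kappa_p(I_2(\theta_n))=2^{p-1}(p-1)!\sum_{h_1,\ldots,h_p}\prod_t\langle\theta_n,e_{h_t}\otimes e_{h_{t+1}}\rangle$ unwind to the same cyclic sum $\sum_{k_1,\ldots,k_p}\prod_t r_{a_t b_t}(k_t-k_{t+1})$; your packaging via a matrix $A_n$ is just a different bookkeeping of the same object. Your combinatorial bijection $(i_m,j_m)\leftrightarrow(3-i_t,i_{t+1})$ and the exponent identity $\sum_t H_{3-i_t,i_{t+1}}=pH$ are exactly the constraints the paper records in its bullet list on the couples $(r_i^2,r_{i+1}^1)$, and your Riemann-sum passage with the long-lag asymptotics is the paper's step ``$B_n$''. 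The variance computation for $p=2$ follows the same decomposition into two crossing and two parallel terms.

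The one point where the paper supplies an argument you only name is your item~(i)--(ii): the control of configurations where some $|k_{t+1}-k_t|$ is bounded (so the asymptotic replacement $r_{ab}(\cdot)\sim C|\cdot|^{H_{ab}-2}$ and the elimination of the $s$-shifts are not valid termwise). The paper does this by an explicit split $\kappa_p=A_n+B_n$, where $A_n$ collects all cycles with at least one $|k_{i+1}-k_i|\le s+2$; it bounds $A_n$ by fixing the short link, absorbing the remaining $p-2$ factors into operator norms $\bigl\|n^{1-H}\sum_k f_k^{i}\otimes f_k^{j}\bigr\|$ (which are $O(1)$ when $H>3/2$), and is left with an $O(n)$ count of pairs $(k_1,k_p)$ with $|k_1-k_p|\le s+2$, giving $A_n=O(n^{3-2H})\to 0$. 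This is the only substantive technical ingredient beyond the Riemann-sum heuristic, and it is what turns your plan into a proof.
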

In Theorem \ref{cumulantNC} we prove that the sequence of the cumulants of $\tilde S_n$ converges for all $p\geq 2$ to a well determined limit. We also prove that the limit of the sequence of the variances (cumulants of order two) is strictly positive. This is enough to determine the speed of convergence of the estimators, but not asymptotic normality, that now 
 depends on whether for certain values of the real parameters $\rho$ and $\eta_{12}$ we have that $\lim_{n\to \infty} \kappa_4(\tilde S_n)=0$ or not. In our numerical experiments (cf. Figures \ref{fig:clt_lf} and \ref{fig:nclt_lf}) we observe a limit behavior that is clearly asymmetric, pointing to non-normality. However, we cannot exclude that for some choices of $\rho$ and $\eta_{12}$ the limit behavior could be normal.
\begin{theorem}\label{th:nogaus}
		Let $\hat \rho_n$ in \eqref{estRHO} and $\hat \eta_n$ in \eqref{estETA}. Let us assume that $H>\frac 32$. Let $z_{11}, z_{22}, z_{12}$ and $z_{21}$ be the functions given in Theorem \ref{cumulantNC}. Then 
		\begin{align*}
			&n^{2-H}(\hat\rho_{n}-\rho)\overset{d}{\to} I_2(f_\rho)+N_{\rho}\\
			&n^{2-H}(\hat\eta_{12,n}-\eta_{12})\overset{d}{\to} I_2(f_\eta)+N_{\eta}
		\end{align*}
		where $f_\rho, f_\eta \in L^2(\R^2; \R)$, $N_{\rho}, N_{\eta}$ are Gaussian random variables such that $N_{\rho}$ is independent of $I_2(f_\rho)$ and $N_{\eta}$ is independent of $I_2(f_{\eta})$. Here, $I_{2}$ denotes the double Wiener-It\^o integral, see Definition \ref{def:mul.int}. 
		\end{theorem}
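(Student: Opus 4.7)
We prove the result for $\hat\rho_n$; the argument for $\hat\eta_{12,n}$ and for the joint limit is analogous. The strategy is to combine the cumulant convergence of Theorem \ref{cumulantNC} with the general chaos-decomposition theory for sequences in the second Wiener--It\^o chaos.

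Using the moving-average representation \eqref{movAverY}, each $Y_k^i$ is a first-order Wiener--It\^o integral $I_1(K_i(k,\cdot))$ against a $d$-dimensional white noise. The product formula for multiple integrals then gives
\[
Y_k^i Y_\ell^j - \E[Y_k^i Y_\ell^j] = I_2\bigl(K_i(k,\cdot)\,\widetilde\otimes\, K_j(\ell,\cdot)\bigr),
\]
where $\widetilde\otimes$ denotes the symmetrized tensor product. Summing over $k$ and rescaling, we obtain $\tilde S_n^\rho := n^{2-H}(\hat\rho_n - \rho) = I_2(g_n^\rho)$, with $g_n^\rho$ a symmetric $L^2$ kernel formed from the $a_i(s)$-weighted tensor products of the $K_i$; a parallel representation holds for $\tilde S_n^\eta := n^{2-H}(\hat\eta_{12,n} - \eta_{12}) = I_2(g_n^\eta)$. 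Theorem \ref{cumulantNC} yields convergence of all cumulants $\kappa_p(\tilde S_n^\rho)$ for $p\geq 2$ to finite limits. Since the second chaos is hypercontractive, bounded variance entails tightness and convergence of all moments, so any subsequential weak limit is uniquely determined by its cumulants.

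To identify this limit as $I_2(f_\rho) + N_\rho$ with $N_\rho$ an independent centered Gaussian, we extract a subsequence along which $g_n^\rho$ converges weakly in $L^2$ to some $f_\rho$ and decompose $g_n^\rho = h_n^\rho + k_n^\rho$ with $h_n^\rho \to f_\rho$ strongly and $k_n^\rho \rightharpoonup 0$. Then $I_2(h_n^\rho) \to I_2(f_\rho)$ in $L^2(\P)$, and we aim to verify the Nualart--Peccati fourth-moment criterion $\|k_n^\rho \otimes_1 k_n^\rho\|_{L^2} \to 0$, which forces $I_2(k_n^\rho) \overset{d}{\to} N_\rho$ with $\Var(N_\rho) = \sigma^2 - \|f_\rho\|_{L^2}^2$, where $\sigma^2 = \lim_n \Var(\tilde S_n^\rho) > 0$ by Theorem \ref{cumulantNC}. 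The asymptotic orthogonality $\langle h_n^\rho, k_n^\rho\rangle_{L^2} \to 0$, together with standard arguments for joint convergence in the second Wiener chaos, yields independence of $I_2(f_\rho)$ and $N_\rho$; uniqueness of the limit (forced by the cumulant convergence) then removes the subsequence.

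The main obstacle is the fourth-moment condition for $k_n^\rho$, which requires a delicate separation of long-range and short-range contributions to $g_n^\rho$. The cross-kernels $z_{12}, z_{21}$ of Theorem \ref{cumulantNC}, with exponent $H-2$, encode the non-summability of cross-covariances responsible for the second-chaos component of the limit, while the auto-kernels $z_{11}, z_{22}$, with exponents $2H_i-2$, contribute the residual Gaussian mass. Identifying $f_\rho\in L^2(\R^2;\R)$ explicitly as the $L^2$-limit of the cross-kernel part of $g_n^\rho$, reading off $\sigma_{N_\rho}^2$ as the residual norm, and checking the contraction $\|k_n^\rho\otimes_1 k_n^\rho\|_{L^2} \to 0$ on the complementary short-range part constitutes the technical heart of the argument.
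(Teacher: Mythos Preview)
Your overall strategy---cumulant convergence from Theorem~\ref{cumulantNC}, tightness, and determination of the limit by moments---coincides with the paper's. The divergence is in how the limit is identified as $I_2(f_\rho)+N_\rho$ with independent Gaussian $N_\rho$.

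The paper does \emph{not} attempt a direct kernel decomposition $g_n^\rho=h_n^\rho+k_n^\rho$ with a fourth-moment verification on the remainder. Instead, it simply invokes a structural result of Nourdin and Poly \cite{PN12}: any distributional limit of a tight sequence in the second Wiener chaos is necessarily of the form $I_2(f)+N$ with $N$ Gaussian and independent of $I_2(f)$. This is a general fact about the closure in law of the second chaos (proved via the spectral representation $\sum_i\lambda_i(N_i^2-1)$), and it bypasses entirely the obstacle you flag. Once one knows the subsequential limit has this form and is moment-determined, the method of moments (Proposition~5.2.2 in \cite{NP12}) upgrades subsequential to full convergence.

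Your proposed route is in principle viable but substantially harder, and the ``main obstacle'' you identify is genuine: there is no evident reason why the weakly-null part $k_n^\rho$ should satisfy $\|k_n^\rho\otimes_1 k_n^\rho\|\to 0$, and your heuristic attribution of the chaos part to the cross-kernels $z_{12},z_{21}$ and the Gaussian part to the auto-kernels $z_{11},z_{22}$ is not how the decomposition actually works---the limiting cumulants in Theorem~\ref{cumulantNC} mix all four $z_{ij}$ in every order $p$, so the split is not along those lines. The cleaner fix is to replace your last two paragraphs by a direct citation of the Nourdin--Poly second-chaos limit theorem.
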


\subsection{High frequency estimator}\label{Secondkind}
	Using the results in Lemma \ref{shorttime}, we derive formulas for $\rho$ and $\eta_{12}$ exploiting the cross-covariances at lag $0, s, -s$, which hold asymptotically as the time lag vanishes, $s\to 0$. In Lemma \ref{shorttime} we consider the expansion of the covariance up to terms of order $1+H$ and $2$, because the next expression \eqref{rho_s} relies on the cancellation of the term of order $1$, so these are the relevant terms in the remainder. 
		\begin{lemma}\label{short-time-inverse}
			 As $s\to 0$, $\rho$ and $\eta_{12}$ satisfy the following formulas:
			for all $H\in(0,2)$, $H\neq 1$, 
			\begin{equation}\label{rho_s}
				\rho=\frac{2\Cov(Y_0^1, Y_0^2)-\Cov(Y_s^1,Y_0^2)-\Cov(Y_0^1,Y_s^2)}{\nu_1\nu_2s^H}+O(s^{\min(1,2-H)})
			\end{equation}
			whereas, only for $H\in(0,1)$, 
			\begin{equation}\label{eta_s}
				\eta_{12}=
				\frac{\Cov(Y_0^1,Y_s^2)-\Cov(Y_s^1,Y_0^2)}{\nu_1\nu_2s^H}	+O(s^{1-H}).
			\end{equation}
		\end{lemma}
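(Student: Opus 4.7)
The plan is to apply Lemma~\ref{shorttime} to the two ordered pairs $(i,j)=(1,2)$ and $(i,j)=(2,1)$, using the symmetry of covariance together with stationarity to identify $\Cov(Y_s^1,Y_0^2)=\Cov(Y_0^2,Y_s^1)$ with the expansion obtained from the second application. Under the conventions $\rho_{12}=\rho_{21}=\rho$ and $\eta_{12}=-\eta_{21}$, the two resulting expansions differ only by flipping the sign of $\eta_{12}$ inside the combinations $(\rho\mp\eta_{12})/2$ and by swapping $\alpha_1\leftrightarrow\alpha_2$ in the coefficients of $s$, $s^{1+H}$, and $s^2$. Write them schematically as
\begin{align*}
\Cov(Y_0^1,Y_s^2)&=\Cov(Y_0^1,Y_0^2)-\nu_1\nu_2\tfrac{\rho-\eta_{12}}{2}s^H+A_{12}s+B_{12}s^{1+H}+C_{12}s^2+o(s^{\max\{2,1+H\}}),\\
\Cov(Y_s^1,Y_0^2)&=\Cov(Y_0^1,Y_0^2)-\nu_1\nu_2\tfrac{\rho+\eta_{12}}{2}s^H+A_{21}s+B_{21}s^{1+H}+C_{21}s^2+o(s^{\max\{2,1+H\}}),
\end{align*}
where the coefficients $A_{ij},B_{ij},C_{ij}$ are read off from Lemma~\ref{shorttime}.

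Next I would form the sum of these two expansions. The $s^H$-terms combine to $-\nu_1\nu_2\rho\, s^H$, since $\tfrac{\rho-\eta_{12}}{2}+\tfrac{\rho+\eta_{12}}{2}=\rho$. The crucial step is to verify that the linear-in-$s$ coefficient $A_{12}+A_{21}$ vanishes. Written out,
\[
A_{12}+A_{21}=-(\alpha_1+\alpha_2)\Cov(Y_0^1,Y_0^2)+\tfrac{\Gamma(H+1)\nu_1\nu_2}{2}\bigl[(\alpha_1^{1-H}+\alpha_2^{1-H})\rho+(\alpha_2^{1-H}-\alpha_1^{1-H})\eta_{12}\bigr],
\]
and substituting the explicit formula \eqref{lag_0} for $\Cov(Y_0^1,Y_0^2)$ shows this is identically zero. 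Solving for $\rho$ and dividing by $\nu_1\nu_2 s^H$ then yields \eqref{rho_s}, with remainder coming from the $s^{1+H}$ and $s^2$ (and $o$-)terms divided by $s^H$, hence of order $O(s)+O(s^{2-H})=O(s^{\min(1,2-H)})$.

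For the difference $\Cov(Y_0^1,Y_s^2)-\Cov(Y_s^1,Y_0^2)$, the constant terms cancel, the $s^H$-terms combine to $\nu_1\nu_2\eta_{12}s^H$ since $-\tfrac{\rho-\eta_{12}}{2}+\tfrac{\rho+\eta_{12}}{2}=\eta_{12}$, and the linear term $A_{12}-A_{21}$ is generically nonzero. Dividing by $\nu_1\nu_2 s^H$ leaves, besides $\eta_{12}$, an error containing a contribution of order $s^{1-H}$. This remainder vanishes as $s\to 0$ only when $H<1$, which is exactly the restriction imposed in \eqref{eta_s}; in that regime $s^{1-H}$ dominates $s$ and $s^{2-H}$, giving the claimed $O(s^{1-H})$ bound.

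The only real obstacle is the cancellation $A_{12}+A_{21}=0$, which is what allows the linear term to drop out of the $\rho$-formula and makes the leading remainder $O(s^{\min(1,2-H)})$ rather than $O(s^{1-H})$; everything else is bookkeeping on the expansion provided by Lemma~\ref{shorttime}.
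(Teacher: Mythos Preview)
Your proof is correct and follows the same approach as the paper, which simply states that the lemma is ``a direct consequence of Lemma~\ref{shorttime}, inverting the asymptotic relations.'' You have filled in the details that the paper omits, in particular the verification that $A_{12}+A_{21}=0$ via \eqref{lag_0}, which is indeed the key step ensuring the remainder in \eqref{rho_s} is $O(s^{\min(1,2-H)})$ rather than $O(s^{1-H})$.
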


Let us consider $n\in\mathbb{N}$ and $T_n>0$, along with the discretization of the time interval $[0,T_n]$ given by $t_k^n=k\frac{T_n}{n}$ for $k=0,\ldots,n$. We denote the time step as $\Delta_n=\frac{T_n}{n}$. Motivated by Lemma \ref{short-time-inverse}, we define the estimators based on $n$ discrete observations
		\begin{equation}\label{hat_rho_n}
			\tilde\rho_n=\frac{1}{\nu_1\nu_2n\Delta_n^H} \sum_{k=0}^{n-1}\Big(Y_{(k+1)\Delta_n}^1-Y_{k\Delta_n}^1\Big)\Big(Y_{(k+1)\Delta_n}^2-Y_{k\Delta_n}^2\Big)
		\end{equation}
		and 
		\begin{equation}\label{hat_eta_n}
			\tilde\eta_{12,n}=\frac{1}{\nu_1\nu_2n\Delta_n^H} \sum_{k=0}^{n-1}\Big(Y_{k\Delta_n}^1Y_{(k+1)\Delta_n}^2-Y_{(k+1)\Delta_n}^1Y_{k\Delta_n}^2\Big).
		\end{equation}
		We consider the asymptotic framework of $\Delta_n\to 0$, as $n\to +\infty$, in order to take advantage of the small lag asymptotic relations \eqref{rho_s} and \eqref{eta_s}. Recall that in the estimators in \eqref{estRHO} and \eqref{estETA} the time-lag was fixed. 
				\begin{proposition}\label{unbiasEst}
			Let $\tilde\rho_n$ and $\tilde\eta_{12,n}$ be the random variables in \eqref{hat_rho_n} and \eqref{hat_eta_n}. If $T_n\to +\infty$ and $\Delta_n\to 0$ as $n\to \infty$, then for $H\in(0,2)\setminus\{1\}$
			$$
			\E[\tilde\rho_n] \underset{n\to +\infty}{\to} \rho
			$$
			and for	$H\in(0,1)$
			$$
			\E[\tilde\eta_{12,n}]\underset{n\to +\infty}{\to} \eta_{12}. 
			$$
			Then $\tilde\rho_n$ and $\tilde\eta_{12,n}$ are asymptotically unbiased estimators for $\rho$ and $\eta_{12}$.
		\end{proposition}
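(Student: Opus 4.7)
The plan is to reduce the expectation to a single short-lag covariance expression using stationarity, and then directly apply Lemma \ref{short-time-inverse}. There is essentially no real asymptotic work here, only an expansion of the expectation followed by substitution.

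First I would use the strong stationarity of $Y$ (Theorem \ref{StatC}) to observe that each summand in \eqref{hat_rho_n} has the same mean:
\begin{equation*}
\E\bigl[(Y^1_{(k+1)\Delta_n}-Y^1_{k\Delta_n})(Y^2_{(k+1)\Delta_n}-Y^2_{k\Delta_n})\bigr]
=\E\bigl[(Y^1_{\Delta_n}-Y^1_0)(Y^2_{\Delta_n}-Y^2_0)\bigr],
\end{equation*}
so the factor $1/n$ simplifies the sum and $T_n$ (equivalently, $n$) drops out of the bias computation. Expanding the product, using $\E[Y^i_t]=0$ and stationarity (so $\Cov(Y^1_{\Delta_n},Y^2_{\Delta_n})=\Cov(Y^1_0,Y^2_0)$), I obtain
\begin{equation*}
\E[\tilde\rho_n]=\frac{2\Cov(Y_0^1,Y_0^2)-\Cov(Y_{\Delta_n}^1,Y_0^2)-\Cov(Y_0^1,Y_{\Delta_n}^2)}{\nu_1\nu_2\,\Delta_n^H}.
\end{equation*}

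At this point I would invoke formula \eqref{rho_s} from Lemma \ref{short-time-inverse} with $s=\Delta_n$, which yields
\begin{equation*}
\E[\tilde\rho_n]=\rho+O\bigl(\Delta_n^{\min(1,2-H)}\bigr).
\end{equation*}
Since $H\in(0,2)\setminus\{1\}$ gives $\min(1,2-H)>0$, and $\Delta_n\to 0$, the error term vanishes and $\E[\tilde\rho_n]\to\rho$. The argument for $\tilde\eta_{12,n}$ is entirely analogous: by stationarity and $\E[Y^i_t]=0$,
\begin{equation*}
\E[\tilde\eta_{12,n}]=\frac{\Cov(Y_0^1,Y_{\Delta_n}^2)-\Cov(Y_{\Delta_n}^1,Y_0^2)}{\nu_1\nu_2\,\Delta_n^H},
\end{equation*}
and applying \eqref{eta_s} with $s=\Delta_n$ gives $\E[\tilde\eta_{12,n}]=\eta_{12}+O(\Delta_n^{1-H})$, which tends to $\eta_{12}$ precisely under the restriction $H\in(0,1)$ stated in the proposition.

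There is no genuine obstacle in this proposition: the entire content is encapsulated in the short-lag expansion of Lemma \ref{short-time-inverse}, which has already been established. The only subtle point to flag in the write-up is the role of the hypotheses on $H$: the restriction $H\neq 1$ in the first statement is needed for \eqref{rho_s} to apply (the case $H=1$ has a different short-time expansion, with a logarithmic correction), and the stronger restriction $H<1$ in the second statement is exactly what forces $\Delta_n^{1-H}\to 0$. The hypothesis $T_n\to\infty$ is not needed for the bias statement itself (stationarity alone kills the $n$-dependence), and is listed in the proposition only to maintain a uniform asymptotic framework with the consistency results that follow.
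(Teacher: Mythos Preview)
Your proof is correct and follows essentially the same approach as the paper: use stationarity to reduce the expectation to a single short-lag covariance expression, then invoke Lemma \ref{short-time-inverse} (equations \eqref{rho_s} and \eqref{eta_s}) with $s=\Delta_n$. Your observation that the hypothesis $T_n\to\infty$ is not actually needed for the bias computation is a nice addition that the paper's proof does not make explicit.
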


Let us consider the following	assumptions.
	\begin{assumption}\label{AssumptionDelta_a}
			As $n\to +\infty$
			\begin{itemize}
				\item[1)] $\Delta_n\to 0$ and
				\item[2)] $n\Delta_n\to +\infty$.
		   \end{itemize}
	   \end{assumption}
   \begin{assumption}\label{AssumptionDelta_b}
 As $n\to+\infty$
	        \begin{itemize}
				\item[3)] $n\Delta_n^2 \to 0$ and
				\item[4)] $n \Delta_n^{4-2H}\to 0$. 
			\end{itemize}

			Note that when $H<1$, we have that $3)$ implies $4)$, while when $H>1$,  we have that  $4)$ implies $3)$. 
		\end{assumption}	
		
Under Assumptions \ref{AssumptionDelta_a}, respectively for $H\in(0,2)\setminus\{1\}$ and for	$H\in(0,1)$, estimators $\tilde \rho_n$ and	 $\tilde \eta_{12, n}$ are consistent.
\begin{theorem}\label{Cons_rho}
Suppose Assumption \ref{AssumptionDelta_a} is in force. As $n\to \infty$, the following convergences hold in $L^2(\P)$ and so in probability.			
	\[
	\begin{split}
	\tilde \rho_n&\to \rho \quad\quad\quad\mbox{ if } H\in(0,2)\setminus \{1\} \\
	\tilde \eta_{12, n}&\to \eta_{12}  \quad\quad\mbox{ if }H\in(0,1)
	\end{split}
	\]				
	\end{theorem}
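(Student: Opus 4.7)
The plan is to prove convergence in $L^2(\P)$, from which convergence in probability follows. Under Assumption \ref{AssumptionDelta_a}, Proposition \ref{unbiasEst} gives $\E[\tilde\rho_n]\to\rho$ for $H\in(0,2)\setminus\{1\}$ and $\E[\tilde\eta_{12,n}]\to\eta_{12}$ for $H\in(0,1)$, so it suffices to show that $\Var(\tilde\rho_n)\to 0$ and $\Var(\tilde\eta_{12,n})\to 0$ in the respective ranges. Writing $\Delta^n_kY^i := Y^i_{(k+1)\Delta_n}-Y^i_{k\Delta_n}$, the summand of $\tilde\rho_n$ is $X_k := \Delta^n_kY^1\,\Delta^n_kY^2$, while the summand of $\tilde\eta_{12,n}$ rearranges as
\[
Z_k \;=\; \det\!\begin{pmatrix} Y^1_{k\Delta_n} & Y^1_{(k+1)\Delta_n}\\ Y^2_{k\Delta_n} & Y^2_{(k+1)\Delta_n}\end{pmatrix}.
\]
Since the joint law is centered Gaussian, Isserlis' theorem reduces $\Cov(X_k,X_l)$ and $\Cov(Z_k,Z_l)$ to short linear combinations of products of two second-order moments of the form $r_{ij}((l-k)\Delta_n)$, $r_{ij}((l-k\pm 1)\Delta_n)$, $\E[Y^i_{k\Delta_n}\Delta^n_lY^j]$ or $\E[\Delta^n_kY^i\Delta^n_lY^j]$.

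The analytic core is the uniform estimate
\[
\bigl|\E[\Delta^n_kY^i\,\Delta^n_lY^j]\bigr| \;\leq\; C\,\Delta_n^{H_i+H_j}\,(1+|k-l|)^{H_i+H_j-2},
\]
proved by writing the LHS as a discrete second difference of $r_{ij}$ at lag $(l-k)\Delta_n$ and bounding it in two complementary regimes: for $|k-l|\Delta_n\lesssim 1$ via the short-lag expansion of Lemma \ref{shorttime} (or its univariate counterpart for $i=j$), which yields the $|\tau|^{H_i+H_j}$ singularity; for $|k-l|\Delta_n\gtrsim 1$ via Theorem \ref{cov}, which gives $r_{ij}''(\tau)=O(|\tau|^{H_i+H_j-4})$, and hence the same bound after a second-order Taylor expansion. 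Inserting this into $\Var(\tilde\rho_n)$, the $\Delta_n^{2H}$ factors cancel and
\[
\Var(\tilde\rho_n) \;\leq\; \frac{C}{n^2}\sum_{k,l=0}^{n-1}(1+|k-l|)^{2H-4},
\]
which is $O(n^{-1})$, $O(n^{-1}\log n)$, $O(n^{2H-4})$ for $H<3/2$, $H=3/2$ and $3/2<H<2$ respectively; all tend to zero.

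For $\tilde\eta_{12,n}$ a naive absolute-value bound fails since $\Var(Z_k)$ is only $O(\Delta_n^{2\min(H_1,H_2)})$. However, writing $r_{ij}(u)=r_{ij}(0)+\phi_{ij}(u)$ and expanding the four bilinear pieces of $\Cov(Z_k,Z_l)$, the constant terms $r_{ii}(0)r_{jj}(0)$ cancel by the antisymmetry of $Z_k$, and the linear-in-$\phi$ contribution collects into coefficients of the form $r_{jj}(0)\,\E[\Delta^n_kY^i\Delta^n_lY^i]$ and $r_{12}(0)\bigl(\E[\Delta^n_kY^1\Delta^n_lY^2]+\E[\Delta^n_kY^2\Delta^n_lY^1]\bigr)$. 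The signed double sum $\sum_{k,l}\E[\Delta^n_kY^i\Delta^n_lY^j] = \E\bigl[(Y^i_{n\Delta_n}-Y^i_0)(Y^j_{n\Delta_n}-Y^j_0)\bigr]$ stays bounded by $4\sup_\tau|r_{ij}(\tau)|$, so this linear contribution to $\sum\Cov(Z_k,Z_l)$ is $O(1)$; the residual quadratic-in-$\phi$ terms are controlled by the uniform bound above. The result is $\Var(\tilde\eta_{12,n}) = O((n\Delta_n^H)^{-2}) + o(1)$, which vanishes under Assumption \ref{AssumptionDelta_a} because $n\Delta_n^H = (n\Delta_n)\Delta_n^{H-1}\to\infty$ for $H<1$.

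The main obstacle is precisely to combine these two mechanisms — the uniform two-regime bound on discrete second differences of $r_{ij}$, merging the fBm-like short-lag expansion of Lemma \ref{shorttime} with the mean-reversion-driven long-lag polynomial decay of Theorem \ref{cov}, and the telescoping signed-sum cancellation — so as to tame what would otherwise be a $\Delta_n^{2\min(H_1,H_2)}$-sized contribution incompatible with the $\Delta_n^{2H}$ normalization used in the definition of $\tilde\eta_{12,n}$.
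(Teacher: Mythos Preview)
Your proof is correct, and it takes a genuinely different route from the paper's. The paper's argument (see the decomposition \eqref{Repr_rho}) uses the explicit SDE structure: writing $Y^i_{(k+1)\Delta_n}=e^{-\alpha_i\Delta_n}Y^i_{k\Delta_n}+\nu_i(B^{H_i}_{(k+1)\Delta_n}-B^{H_i}_{k\Delta_n})+R^i_{(k+1)\Delta_n}$, it expands $\Delta^n_kY^1\Delta^n_kY^2$ into a main mfBm-increment term plus eight remainder terms, and handles the main term by self-similarity of the mfBm; consistency of $\tilde\eta_{12,n}$ is then dismissed as ``similar''. Your approach, by contrast, never invokes the driving mfBm at all: it works purely at the level of the covariance function, combining the short-lag expansion of Lemma~\ref{shorttime} and the long-lag decay of Theorem~\ref{cov} into a single uniform bound on discrete second differences of $r_{ij}$. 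The payoff is a more model-free argument that would transfer to any stationary Gaussian process with the same two-regime covariance behaviour; the cost is that the $\tilde\eta_{12,n}$ case becomes substantially more delicate than in the paper, requiring the telescoping cancellation you describe for the linear-in-$\phi$ part. One clarification: your claim that ``the residual quadratic-in-$\phi$ terms are controlled by the uniform bound above'' is slightly underspecified---those terms also involve products of \emph{first} differences of $r_{ij}$ and products of $\phi_{ij}$ with second differences, so one needs the companion first-difference estimate $|r_{ij}((m+1)\Delta_n)-r_{ij}(m\Delta_n)|\leq C\Delta_n^{H_i+H_j}(1+|m|)^{H_i+H_j-1}$ and the bound $|\phi_{ij}(m\Delta_n)|\leq C\min\{1,(|m|\Delta_n)^{H_i+H_j}\}$, both of which follow from the same two lemmas; with these in hand the quadratic contribution to $\sum_{k,l}\Cov(Z_k,Z_l)$ is indeed $o((n\Delta_n^H)^2)$ as you need.
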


When $H<\frac 32$ we have the following result on  the asymptotic distribution of $\sqrt{n}(\tilde \rho_n-\rho)$.  Let us denote 
			$$
			\sigma_n^2=\Var\Big(\frac{1}{n}\sum_{k=0}^{n-1} (B_{k+1}^{H_1}-B_k^{H_1})(B_{k+1}^{H_2}-B_k^{H_2})\Big).
			$$
		
		\begin{theorem}\label{CLT_tilde_rho}
			Let $H<\frac 32$ and Assumption \ref{AssumptionDelta_a} and \ref{AssumptionDelta_b} be in force.
			Then there exists 
			\begin{align*}
				\lim_{n\to +\infty} \sigma_n^2 =\Var(B_1^{H_1} B_1^{H_2})+2\sum_{k=1}^{+\infty} \Cov\Big(B_1^{H_1} B_1^{H_2}, (B_{k+1}^{H_1}-B_k^{H_1})(B_{k+1}^{H_2}-B_k^{H_2})\Big)=:\sigma^2>0.
			\end{align*}
			Let $N\sim \mathcal N(0,\sigma^2)$. Then
			\begin{equation*}
				\sqrt{n}(\tilde \rho_n-\rho) \overset{d}{\to}N.
			\end{equation*}
		\end{theorem}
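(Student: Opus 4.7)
The plan is to decompose each fOU increment into an fBm increment and a mean-reversion remainder, show that the remainder part vanishes fast enough in the $\sqrt{n}$-scaling (this is where Assumption \ref{AssumptionDelta_b} enters), and then invoke a second-chaos / Breuer--Major style CLT for the resulting product of Gaussian increments after rescaling by joint self-similarity of the mfBm.

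First, directly from Definition \ref{fou} (equivalently from \eqref{Langeq.multi}), for $i=1,2$ and $k=0,\ldots,n-1$ one has
\[
Y^i_{(k+1)\Delta_n}-Y^i_{k\Delta_n}=\nu_i\Delta B^{H_i}_k+R^i_k,\qquad R^i_k:=-\alpha_i\int_{k\Delta_n}^{(k+1)\Delta_n}Y^i_s\,ds,
\]
where $\Delta B^{H_i}_k:=B^{H_i}_{(k+1)\Delta_n}-B^{H_i}_{k\Delta_n}$. Substituting into \eqref{hat_rho_n} and expanding the product of increments splits $\tilde\rho_n$ as $\tilde\rho_n=\tilde\rho_n^{(0)}+E_n$, where
\[
\tilde\rho_n^{(0)}:=\frac{1}{\nu_{1}\nu_{2}n\Delta_n^H}\sum_{k=0}^{n-1}\nu_{1}\nu_{2}\Delta B^{H_1}_k\,\Delta B^{H_2}_k
\]
and $E_n$ collects the three remaining bilinear terms $\Delta B^{H_1}_k R^2_k$, $R^1_k\Delta B^{H_2}_k$ and $R^1_k R^2_k$, each divided by $\nu_1\nu_2 n\Delta_n^H$. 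Stationarity of $Y$ gives $\E[(R^i_k)^2]=O(\Delta_n^2)$ while $\E[(\Delta B^{H_i}_k)^2]=\Delta_n^{2H_i}$. A Wick-expansion based $L^2$ bound on the sum defining $E_n$, using the known decay of mfBm-increment covariances, produces
\[
\E[(\sqrt n\,E_n)^2]=O(n\Delta_n^{4-2H})+O(n\Delta_n^2),
\]
which tends to zero by Assumption \ref{AssumptionDelta_b}. Hence $\sqrt{n}(\tilde\rho_n-\tilde\rho_n^{(0)})\to 0$ in $L^2(\P)$.

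Joint self-similarity of the mfBm $(B^{H_1},B^{H_2})$ yields
\[
\bigl(\Delta_n^{-H_i}\Delta B^{H_i}_k\bigr)_{k\geq 0,\,i=1,2}\;\eqlaw\;\bigl(B^{H_i}_{k+1}-B^{H_i}_k\bigr)_{k\geq 0,\,i=1,2},
\]
so that setting $U_k:=(B^{H_1}_{k+1}-B^{H_1}_k)(B^{H_2}_{k+1}-B^{H_2}_k)$ we obtain
\[
\sqrt n\bigl(\tilde\rho_n^{(0)}-\rho\bigr)\;\eqlaw\;\frac{1}{\sqrt n}\sum_{k=0}^{n-1}(U_k-\rho),
\]
where $\E[U_0]=\E[B^{H_1}_1 B^{H_2}_1]=\rho$ is immediate from the mfBm covariance recalled in Appendix \ref{sec:mfbm} (taking $s=t=1$ and $\sigma_1=\sigma_2=1$). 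The sequence $(U_k)_k$ is stationary and each $U_k-\rho$ lies in the second Wiener chaos of the Gaussian noise generating the mfBm. By Isserlis' formula,
\[
\Cov(U_0,U_k)=r_{11}(k)\,r_{22}(k)+r_{12}(k)\,r_{21}(k),
\]
where $r_{ij}(k)$ denotes the lag-$k$ cross-covariance of the unit-length increments of $B^{H_i}$ and $B^{H_j}$. The asymptotics $r_{ii}(k)\sim c_i|k|^{2H_i-2}$ and $r_{ij}(k)\sim c_{ij}|k|^{H-2}$ for $i\neq j$ force both products to decay like $|k|^{2H-4}$, which is summable exactly when $H<3/2$. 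This simultaneously yields the finite series limit $\sigma^2$ in the stated form and the summability required to apply the Breuer--Major theorem (equivalently, the Nualart--Peccati fourth moment theorem) to conclude $\frac{1}{\sqrt n}\sum_k(U_k-\rho)\overset{d}{\to}\mathcal N(0,\sigma^2)$.

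The main obstacle I foresee is the $L^2$-control of $\sqrt n E_n$: although each individual term $\Delta B^{H_i}_k R^j_k$ is easily bounded pointwise by Cauchy--Schwarz, controlling its sum requires a double-index Wick expansion together with an integrated decay estimate on the mfBm increment covariances, and the exponents $n\Delta_n^{4-2H}$ and $n\Delta_n^2$ that emerge from that estimate are precisely what dictates Assumption \ref{AssumptionDelta_b}. Once this estimate is in place, the remainder of the argument is a standard second-chaos CLT, with the bivariate structure entering only via the slightly more intricate Isserlis formula for $\Cov(U_0,U_k)$.
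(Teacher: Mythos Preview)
Your proposal is correct and follows essentially the same approach as the paper's proof. Both arguments decompose each fOU increment into an fBm increment plus a mean-reversion remainder, show that the remainder terms vanish in the $\sqrt{n}$-scaling precisely under Assumption \ref{AssumptionDelta_b}, reduce the leading term by joint self-similarity of the mfBm to a stationary second-chaos functional of unit-step increments, and conclude via the Fourth Moment / Breuer--Major theorem using the $|k|^{2H-4}$ decay of the increment covariances. The only cosmetic difference is that the paper parametrises the decomposition through the semigroup identity $Y^i_{(k+1)\Delta_n}=e^{-\alpha_i\Delta_n}Y^i_{k\Delta_n}+\xi^i_{(k+1)\Delta_n}$ and then further splits $\xi^i$, producing more (but equivalent) remainder terms than your direct SDE-based split $\nu_i\Delta B^{H_i}_k+R^i_k$; your version is arguably cleaner. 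The paper is equally sketchy on the $L^2$ control of the remainder, deferring it to ``straightforward computations'', so your honest flagging of this step as the main obstacle is appropriate.
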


\subsection{Estimating the volatility parameter in the one-dimensional case}

The methods we used above to derive and study our correlation estimators, when applied to the univariate fOU process in \eqref{fOU1}, provide estimators of the volatility parameter $\nu$.
We denote the parameters of the univariate fOU as $\alpha, \nu$ and $\HH$, to avoid confusion in the notation with $H=H_{1}+H_{2}$. 

One can easily verify that
$$
\nu^2=\frac{\Cov(Y_{t+s}, Y_t)-e^{-\a s}\Var(Y_t)}{\HH(2\HH-1)e^{-\a s} I(s)}
$$ 
where 
$$
I(s)=\int_0^s e^{\a u} \int_{-\infty}^0 e^{\a v} (u-v)^{2\HH-2} dv du.
$$
Then, we define the estimator 
\begin{align*}
\hat\nu_n^2& = \frac{1}{n\HH(2\HH-1)e^{-\a s} I(s)} \sum_{k=0}^{n-1} Y_{k+s} Y_k-e^{-\a s} Y_k^2  \\
&=\frac{\overline a_1(s)}{n} \sum_{k=0}^{n-1}Y_{k}^2 +\frac{\overline a_2(s)}{n} \sum_{k=0}^{n-s} Y_{k+s} Y_k 
\end{align*}
where $\overline a_1(s)=\nu^2 a_1(s)$ and $\overline a_2(s)=\nu^2 (a_2(s)+a_3(s))$, where $a_1(s), a_2(s), a_3(s)$ are given in \eqref{coefficients}, taking $\HH=H_1=H_2$, $\a=\a_1=\a_2$, $\nu=\nu_1=\nu_2$, $\rho=1$ and $\eta_{12}=0$. 
\begin{theorem}\label{NU_1}
	Let $\HH\in (0,\frac 12)\cup (\frac 12, 1)$. As $n\to +\infty$,  $\hat \nu_n^2$ is an asymptotically unbiased estimator for $\nu^2$. Moreover 
	$$
	\hat \nu_n^2 \to \nu^2
	$$
	in $L^2(\P)$ and then in probability. 
	\begin{itemize}
	\item
	When $\HH<\frac 34$, we also have that 
	$$
	\sqrt{n}(\hat \nu_n^2- \nu^2) \overset{d}{\to} N
	$$
	where $N\sim \mathcal N(0,\sigma^2)$ and $\sigma^2=\lim_{n\to \infty} \Var(\hat \nu_n^2)>0$. 
	\item
	When $\HH=\frac 34$, we have that
	$$
	\sqrt{\frac{n}{\log n}}(\hat\nu_n^2-\nu^2)\overset{d}{\to} N
	$$
	where $N\sim \mathcal N(0,\sigma^2)$ and $\sigma^2=\lim_{n\to \infty} \Var(\hat \nu_n^2)>0$.
	\item
	When $\HH>\frac 34$, we have that
	$$
n^{2(1-\HH)}(\overline \nu_n^2 -\nu^2)\overset{d}{\to} R^\HH,
$$
where
	$$
	R^\HH=I_2(g)
	\mbox{ 
and }
	g(t,t')=\frac{2(1-e^{-\a s})\nu^2}{\a^{2}e^{-\a s}I(s)\beta(2-2\HH, \HH-1/2)}\int_0^1(u-t)_+^{\HH-\frac 32}(u-t')_+^{\HH-\frac 32} du.
$$ 
\end{itemize}
\end{theorem}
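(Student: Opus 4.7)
The estimator $\hat\nu_n^2$ is the degenerate specialization of the bivariate low-frequency estimator $\hat\rho_n$ of \eqref{estRHO} to the parameter configuration $H_1 = H_2 = \HH$, $\a_1 = \a_2 = \a$, $\nu_1 = \nu_2 = \nu$, $\rho = 1$, $\eta_{12} = 0$, under which $Y^1 = Y^2 = Y$. In this specialization the three sample-covariance terms of \eqref{estRHO} collapse into the two appearing in $\hat\nu_n^2$, the relations $\overline a_1(s) = \nu^2 a_1(s)$ and $\overline a_2(s) = \nu^2(a_2(s) + a_3(s))$ hold exactly, and the bivariate threshold $H = 3/2$ translates to $\HH = 3/4$. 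The plan is therefore to transfer the conclusions of Theorems \ref{VAR_SN}, \ref{S_n_conv}, \ref{CLT_hat_rho_eta32}, \ref{cumulantNC} and \ref{th:nogaus} through this specialization, while verifying strict positivity of the limiting variance and identifying the degenerate non-Gaussian limit.

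First I would handle asymptotic unbiasedness and $L^2$-consistency. Stationarity gives $\E[\hat\nu_n^2] \to \overline a_1(s) r(0) + \overline a_2(s) r(s) = \nu^2$ by the inversion identity from which $\hat\nu_n^2$ is defined. Setting $S_n := \hat\nu_n^2 - \E[\hat\nu_n^2]$, a centered random variable lying in the second Wiener chaos generated by $Y$, the specialization of Theorem \ref{VAR_SN} with $H = 2\HH$ yields $\Var(S_n) = O(1/n)$ for $\HH < 3/4$, $O(\log n / n)$ at $\HH = 3/4$, and $O(n^{4\HH - 4})$ for $\HH > 3/4$; in all three cases $\Var(S_n) \to 0$ throughout $(0,1/2) \cup (1/2, 1)$, which yields $L^2$-consistency.

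For the Gaussian regimes, the specializations of Theorem \ref{S_n_conv} and Theorem \ref{CLT_hat_rho_eta32} give $\sqrt n\, S_n \overset{d}{\to} N$ when $\HH < 3/4$ and $\sqrt{n/\log n}\, S_n \overset{d}{\to} N$ when $\HH = 3/4$. Strict positivity of $\sigma^2$ in each case follows by expressing it as a convergent covariance series whose first term $\Var(\overline a_1(s) Y_0^2 + \overline a_2(s) Y_s Y_0)$ is positive, since $Y_0^2$ and $Y_s Y_0$ are non-collinear elements of the second Wiener chaos for any $s > 0$.

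The main obstacle is the non-Gaussian regime $\HH > 3/4$. Using the one-dimensional moving-average representation $Y_t = \int_{\R} K(t,u)\, dW_u$, the product formula in Wiener chaos expresses $S_n = I_2(\Phi_n)$ for an explicit symmetric kernel $\Phi_n \in L^2(\R^2)$. Following the cumulant approach of Theorem \ref{cumulantNC} and the argument of Theorem \ref{th:nogaus}, one shows that $n^{2(1-\HH)}\Phi_n \to g$ in $L^2(\R^2)$, where $g$ is extracted from the boundary asymptotic $K(t,u) \sim \a^{-1} c_{\HH}(u - t)_+^{\HH - 3/2}$ as $u \to \infty$, with the prefactor $(2(1 - e^{-\a s})\nu^2)/(\a^2 e^{-\a s} I(s))$ coming from combining $\overline a_1(s)$ and $\overline a_2(s)$ and the normalization $\beta(2 - 2\HH, \HH - 1/2)$ arising from the $L^2$-limit of the convolution of two such kernels. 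The isometry of $I_2$ then yields $n^{2(1-\HH)} S_n \overset{d}{\to} I_2(g) = R^\HH$. Unlike in the bivariate Theorem \ref{th:nogaus}, no residual Gaussian term survives: in the degenerate case $H_1 = H_2$ the four kernels $z_{ij}$ of Theorem \ref{cumulantNC} all coincide and share the single slow decay $|x-y|^{2\HH - 2}$, so every $p$-cumulant is already captured by the single second-chaos limit.
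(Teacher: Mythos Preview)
Your reduction to the degenerate bivariate case and your handling of unbiasedness, $L^2$-consistency, and the two Gaussian regimes ($\HH<3/4$ and $\HH=3/4$) are exactly what the paper does: one writes $\hat\nu_n^2=\nu^2\hat\rho_n$ in the specialization $Y^1=Y^2$ and reads off the conclusions of Theorems \ref{consistency}, \ref{S_n_conv}, \ref{CLT_hat_rho_eta32}.

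For $\HH>3/4$ your route diverges from the paper's. You propose to show $n^{2(1-\HH)}\Phi_n\to g$ in $L^2(\R^2)$ and then invoke the $I_2$-isometry; the paper instead applies Theorem \ref{th:nogaus} to get an abstract limit of the form $I_2(f)+N$, computes all its cumulants via the degenerate form of Theorem \ref{cumulantNC} (where every $z_{ij}$ collapses to the single function $f(x,y)=c\,|x-y|^{2\HH-2}$), then verifies by direct calculation that $I_2(g)$ has the same cumulant sequence, and concludes by determinacy of the law from its moments. Your kernel-convergence approach is more direct when it works, but note that it is \emph{not} a consequence of Theorems \ref{cumulantNC}--\ref{th:nogaus}, which only give cumulant convergence; you would need a separate Taqqu--Dobrushin--Major type argument to obtain the $L^2$ limit of the kernels. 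Your heuristic for why the Gaussian piece vanishes (coincidence of the $z_{ij}$) is morally correct but is not how the paper argues: the paper never isolates the Gaussian component, it just matches the full cumulant sequence to that of a pure double integral.

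One small gap: your positivity argument for $\sigma^2$ is not valid as stated. The asymptotic variance is the full series $\Var(Z_0)+2\sum_{k\ge 1}\Cov(Z_0,Z_k)$, and positivity of the lag-zero term alone does not prevent cancellation. The paper does not give a self-contained argument here either (in Theorems \ref{CLT_hat_rho_eta} and \ref{CLT_hat_rho_eta32} positivity is an assumption), so you are not worse off, but you should not claim it follows from non-collinearity of $Y_0^2$ and $Y_sY_0$.
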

The next estimator  was introduced ``in spirit''  in \cite{GJR18} using a linear regression, but no asymptotic theory was given. Here we motivate it using the short lag asymptotic form of the autocovariance function and establish its infill and long-span asymptotic theory. 
Indeed, by Lemma \ref{shorttime1dim} it follows
$$
\nu^2-2\frac{\Var(Y_t)-\Cov(Y_t, Y_{t+s})}{s^{2\HH}}=O(s^{2-2\HH}).
$$
Passing to the empirical counterpart and removing the remainder, one gets the estimator
\begin{equation}\label{nu_est}
\tilde	\nu_n^2=\frac{1}{n\Delta_n^{2\HH}} \sum_{k=0}^{n-1} (Y_{(k+1)\Delta_n}-Y_{k\Delta_n})^2.
\end{equation}
that, as we mentioned, corresponds to the one implemented as a regression in \cite{GJR18}.
\begin{theorem}\label{NU_2}
	Let $\HH\in (0,\frac 12)\cup (\frac 12, 1)$. Supposing that $\Delta_n\to 0$ as $n\to \infty$, then $\tilde\nu_n^2$ is asymptotically unbiased estimator for $\nu^2$, i.e. $\E[\tilde\nu_n^2]\to \nu^2$. If in addition $T_{n}=n\Delta_n\to +\infty$ as $n\to +\infty$, then 
	$$
	\tilde\nu_n^2\to \nu^2
	$$ 
	in $L^2(\P)$ and then in probability.
	Moreover, when $\HH<\frac 34$, supposing also that 
	$n\Delta_n^2\to 0$ and $n\Delta_n^{4-4\HH}\to 0$, we have that
	$$
	\sqrt{n}(\tilde \nu^2_n-\nu^2)\overset{d}{\to} N
	$$  
	where $N\sim \mathcal N(0,\sigma^2)$ and $\sigma^2=\lim_{n\to \infty} \Var(\sqrt{n}(\tilde\nu_n^2-\nu^2))$. 
\end{theorem}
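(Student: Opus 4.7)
The proof specializes the analysis of Theorem \ref{CLT_tilde_rho} to the univariate setting via the substitutions $H_1=H_2=\HH$, $H=2\HH$, $\a_1=\a_2=\a$, $\nu_1=\nu_2=\nu$, $\rho=1$, $\eta_{12}=0$. Set $X_k^n := Y_{(k+1)\Delta_n}-Y_{k\Delta_n}$, a centered stationary Gaussian sequence, so that $\tilde\nu_n^2 = (n\Delta_n^{2\HH})^{-1}\sum_{k=0}^{n-1}(X_k^n)^2$.

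\emph{Asymptotic unbiasedness and $L^2$-consistency.} By stationarity, $\E[(X_k^n)^2]=2(\Var(Y_0)-\Cov(Y_0,Y_{\Delta_n}))$. The univariate short-lag expansion of Lemma \ref{shorttime1dim} (in which the linear-in-$s$ coefficient turns out to vanish once the explicit formula $\Var(Y_0)=\nu^2\Gamma(2\HH+1)/(2\a^{2\HH})$ is plugged in) gives
\[
\Var(Y_0)-\Cov(Y_0,Y_s)=\tfrac{\nu^2}{2}s^{2\HH}+O\bigl(s^{\min\{2,1+2\HH\}}\bigr),
\]
so that $\E[\tilde\nu_n^2]-\nu^2=O(\Delta_n^{\min\{2-2\HH,1\}})\to 0$ as soon as $\Delta_n\to 0$. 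For the variance, Wick's formula for Gaussians yields $\Cov((X_j^n)^2,(X_k^n)^2)=2(\E[X_j^n X_k^n])^2$, hence
\[
\Var(\tilde\nu_n^2)=\frac{2}{n^2\Delta_n^{4\HH}}\sum_{j,k=0}^{n-1}\bigl(\E[X_j^n X_k^n]\bigr)^2.
\]
The covariance $\E[X_j^n X_k^n]$ behaves like $\Delta_n^{2\HH}$ times a fBm-increment covariance when $|k-j|\Delta_n$ is bounded, and inherits the polynomial decay of the fOU autocovariance (Theorem \ref{Decay1}) when $|k-j|\Delta_n$ is large. Combining these two regimes yields $\Var(\tilde\nu_n^2)\to 0$ under Assumption \ref{AssumptionDelta_a}, giving $L^2$-consistency and therefore convergence in probability.

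\emph{Central limit theorem for $\HH<3/4$.} Decompose
\[
\sqrt{n}(\tilde\nu_n^2-\nu^2)=R_n+S_n,\qquad R_n:=\sqrt{n}\bigl(\E[\tilde\nu_n^2]-\nu^2\bigr),\qquad S_n:=\frac{1}{\sqrt{n}\Delta_n^{2\HH}}\sum_{k=0}^{n-1}\bigl((X_k^n)^2-\E[(X_k^n)^2]\bigr).
\]
By the first step, $R_n=O(\sqrt{n}\Delta_n^{\min\{2-2\HH,1\}})$, which tends to $0$ exactly under Assumption \ref{AssumptionDelta_b}: the condition $n\Delta_n^2\to 0$ dominates in the regime $\HH<1/2$, and $n\Delta_n^{4-4\HH}\to 0$ dominates in the regime $\HH>1/2$. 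The fluctuation term $S_n$ is a Hermite-rank-$2$ quadratic functional of a stationary Gaussian sequence and therefore lives in the second Wiener chaos. Since $\Delta_n^{-2\HH}\Cov(X_0^n,X_k^n)$ converges to the fBm-increment covariance $\tfrac12(|k+1|^{2\HH}-2|k|^{2\HH}+|k-1|^{2\HH})\sim \HH(2\HH-1)|k|^{2\HH-2}$, which is square-summable in $k$ precisely when $\HH<3/4$, the Breuer--Major theorem (equivalently, the fourth moment theorem applied through the Malliavin--Stein machinery cited in the introduction) delivers both $\Var(S_n)\to\sigma^2>0$ and vanishing of the fourth cumulant. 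Hence $S_n\overset{d}{\to}\mathcal N(0,\sigma^2)$, and the desired CLT follows from Slutsky's theorem.

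\emph{Main obstacle.} The delicate step is controlling uniformly in $n$ the discrepancy between the increments $X_k^n$ of the fOU and those of the driving fBm, so as to transfer the Breuer--Major CLT to the fOU setting while keeping the bias negligible. This requires a careful matching of short- and long-lag expansions of the fOU covariance: Assumption \ref{AssumptionDelta_b} is tailored to absorb the leading bias corrections stemming from the $s^{1+2\HH}$ and $s^2$ terms in Lemma \ref{shorttime1dim}, while the threshold $\HH<3/4$ is the classical obstruction to square-summability of the Hermite-rank-$2$ covariances.
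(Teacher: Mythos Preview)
Your proposal is correct and follows essentially the same route as the paper: specialize the bivariate high-frequency theory to $H_1=H_2=\HH$, $\alpha_1=\alpha_2$, $\nu_1=\nu_2$, $\rho=1$, $\eta_{12}=0$. The paper's own proof is slightly more streamlined in that it simply records the identity $\tilde\nu_n^2=\nu^2\tilde\rho_n$ and then invokes Theorems~\ref{Cons_rho} and~\ref{CLT_tilde_rho} directly, so the ``main obstacle'' you flag (transferring the Breuer--Major CLT from fBm increments to fOU increments) is exactly what the decomposition \eqref{Repr_rho} in the proof of Theorem~\ref{CLT_tilde_rho} already takes care of.
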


\subsection{Comments}

\begin{remark}[The estimators in practice]
When using estimators $\hat\rho$ and $\hat\eta$ in practical situations, where the univariate marginal parameters are not known, one should first estimate the univariate parameters with some consistent estimators, and then plug-in these estimates in the estimators $\hat\rho$ and $\hat\eta$. 
In the case of the scale coefficients, $\nu_i$, we expect that substituting consistent estimators in place of the actual parameters should not to be a problem,
 since the $\nu_i$'s enter the expressions for $\hat\rho$ and $\hat\eta$ multiplicatively. So, consistency should still hold.
However, the speed of mean-reversion coefficients and the Hurst exponents parametrize the integrand in  $I_ij(t-s)$, so that it is hard to say whether consistency is preserved. 

Alternatively, based on the convergence of the empirical cross covariances in Lemma \ref{cross-ergod}, one can for example use the Generalized Method of Moments, which infers all the parameters at once by considering an overdetermined system of moment conditions (cross-covariances), and which delivers a lower asymptotic variance \cite{hansen1982large,dgp.eco}. 
\end{remark}

\begin{remark}[Convergence of the error]
Together, Theorems \ref{CLT_hat_rho_eta}, \ref{CLT_hat_rho_eta32} and \ref{th:nogaus} imply that \\ $O((\E|\hat{\rho}_{n} -\rho|^{2})^{1/2}) =n^{-\min\{1/2,2-2H\}\}}$, except for $H=3/2$, in which case a logarithmic correction is present. See Figure \ref{fig:conv_rates_lf}.
\end{remark}

\begin{remark}[On the dependence on the Hurst exponents]
The fact that the behavior in Theorem \ref{th:nogaus} is different depending on the position of $H=H_{1}+H_{2}$ with respect to $3/2$ is in line with known statistical results on fractional processes, where, e.g. in the unidimensional case, the behavior depends on the position of the Hurst parameter with respect to $3/4$. For example, in \cite{HN10,HNZ19}, several statistical estimators for the parameters of a uni-dimensional fOU  are examined, finding central limit theorems when the Hurst parameter is below $3/4$, slower speed of convergence and Rosenblatt limit when the Hurst parameter is above $3/4$. In our situation, one can interpret the change of behavior as that when $H<3/2$ the memory is weak enough so that in the limit we get Gaussian fluctuations, while when $H>3/2$ the strong memory is visible in the limit, which appears more ``slowly''
and is non-Gaussian. $H=3/2$ is the ``phase transition''. See also \cite{Pipiras_Taqqu_2017} for more on this phenomenon.
\end{remark}

\begin{remark}
The Gaussian results such as the in Theorem \ref{CLT_hat_rho_eta} could be used to compute standard errors and associated confidence intervals. To do so, the asymptotic variance can be computed using the moving block bootstrap for functionals of (long-memory) dependent sequences with Gaussian limit given in \cite{A94}. 
\end{remark}

\begin{remark}[Estimators based on sample correlations]\label{rm:correlations}
The estimators in Definition \ref{first_kind_estimator} are motivated by \eqref{111} and \eqref{222}. Analogous expressions can be obtained based on sample correlations instead of sample covariances. Indeed, starting from Theorem \ref{StatC}, after normalizing \eqref{r+} and \eqref{r-} with $\sqrt{\Var(Y^{1}_{0})\Var(Y^{2}_{0})}$, one can easily write\begin{itemize}
	\item when $i=j$
	\begin{align*}
		\Corr(Y_t^i, Y_s^i)= \frac{\a_i^{2H_i}\sin \pi H_i}{2\pi} \int_{-\infty}^{\infty} e^{i(t-s)x} \frac{|x|^{1-2H_i}}{\a_i^2+x^2}dx;
	\end{align*} 
	\item when $i\neq j$ and $H=H_1+H_2\neq 1$, 
	\begin{align}\label{corr_0_exp}
		\Corr(Y_t^i,& Y_s^j)\notag\\&=e^{-\a_i(t-s)}\Corr(Y_0^1, Y_0^2)+\frac{H(H-1)\a_1^{H_1} \a_2^{H_2} e^{-\a_i (t-s)}}{\sqrt{\Gamma(2H_1+1)\Gamma(2H_2+1)}}(\rho+\eta_{ij}) I_{ij}(t-s);
	\end{align}
	\item when $i\neq j$ and $H=H_1+H_2=1$, 
	\begin{align*}
		\Corr(Y_t^i,& Y_s^j)=e^{-\a_i (t-s)}\Corr(Y^1_0,Y^2_0)- \frac{\a_1^{H_1}\a_2^{H_2} e^{-\a_i(t-s)}}{\sqrt{\Gamma(2H_1+1)\Gamma(2H_2+1)}} \eta_{ij} I_{ij}(t-s);
	\end{align*}
\end{itemize}
 which is explicit once the marginal parameters are know, following \cite{cheridito2003}.
		In this case we have
		\begin{align*}
			\rho&=-\frac{\sqrt{\Gamma(2H_1+1)\Gamma(2H_2+1)}}{2\a_1^{H_1}\a_2^{H_2}H(H-1)}\Big(\frac{1}{I_{12}(s)}+\frac{1}{I_{21}(s)}\Big)\Corr(Y_t^1, Y_t^2)\\
			&+\frac{\sqrt{\Gamma(2H_1+1)\Gamma(2H_2+1)}}{2\a_1^{H_1}\a_2^{H_2}H(H-1)}\frac{e^{\a_1 s}}{I_{12}(s)}\Corr(Y^1_{t+s},Y^2_t)\\
			&+\frac{\sqrt{\Gamma(2H_1+1)\Gamma(2H_2+1)}}{2\a_1^{H_1}\a_2^{H_2}H(H-1)}\frac{e^{\a_2 s}}{I_{21}(s)}\Corr(Y^1_t, Y^2_{t+s})\\
		\end{align*}
		and
		\begin{align*}
			\eta_{12}&=-\frac{\sqrt{\Gamma(2H_1+1)\Gamma(2H_2+1)}}{2\a_1^{H_1}\a_2^{H_2}H(H-1)}\Big(\frac{1}{I_{12}(s)}-\frac{1}{I_{21}(s)}\Big)\Corr(Y_t^1, Y_t^2)\\
			&+\frac{\sqrt{\Gamma(2H_1+1)\Gamma(2H_2+1)}}{2\a_1^{H_1}\a_2^{H_2}H(H-1)}\frac{e^{\a_1 s}}{I_{12}(s)}\Corr(Y^1_{t+s},Y^2_t)\\
			&-\frac{\sqrt{\Gamma(2H_1+1)\Gamma(2H_2+1)}}{2\a_1^{H_1}\a_2^{H_2}H(H-1)}\frac{e^{\a_2 s}}{I_{21}(s)}\Corr(Y^1_t, Y^2_{t+s}).\\
		\end{align*}
Moreover, if we suppose a priori that we know that $\eta=0$, we have the simpler representation for $\rho$
		\begin{align*}
			\rho=\frac{\sqrt{\Gamma(2H_1+1)\Gamma(2H_2+1)}}{\Gamma(H+1)}\frac{\a_1+\a_2}{\a_1^{H_1}\a_2^{H_2}(\a_1^{1-H}+\a_2^{1-H})} \Corr(Y_t^1, Y_t^2).
		\end{align*}
Substituting sample correlations in place of theoretical correlations, we can obtain estimators for $\rho$ and $\eta$ similar to $\hat{\rho}_{n}$ and $ \hat\eta_{12,n}$. Let us denote these estimators based on sample correlations instead of covariances by $\hat{\rho}_{n,c}$ and
$\hat\eta_{12,n,c}$.	
		
		Let us denote by $\bar{\Var}$ and $\bar{\Cov}$ sample variances and covariances and consider $\hat{\rho}_{n,c}$ (a similar discussion would apply to $ \hat\eta_{12,n}$). We can then write, for suitable functions $\gamma_{1}(s),\gamma_{2}(s),\gamma_{3}(s)$, 
\[
\hat{\rho}_{n,c} = \sqrt{\frac{{\Var}(Y^{1}_{0}){\Var}(Y^{2}_{0})}{\bar{\Var}(Y^{1}_{0})\bar{\Var}(Y^{2}_{0})}}\,\big(
\gamma_{1}(s)
\bar\Cov(Y_t^1, Y_t^2)
+
\gamma_{2}(s)\bar\Cov(Y^1_{t+s},Y^2_t)
+
\gamma_{3}(s)\bar\Cov(Y^1_t, Y^2_{t+s})\big).
\]		
The second factor in the product above is exactly in the form  \eqref{nohatS_n} and can be handled as in the proof of Theorem \ref{VAR_SN}. 
Up to logarithmic corrections, it also holds that $O(\bar{\Var}(Y^{i}_{0})-{\Var}(Y^{i}_{0}))=n^{-\min\{1/2,2-2H_{i}\}}$, for 
		$i=1,2$. Putting together these estimates, with standard estimations one can check that
		\[		 
O((\E|\hat{\rho}_{n,c} -\rho|^{2})^{1/2}) =n^{-\min\{1/2,2-2\max\{H_{1},H_{2}\}\}}
		\]
		and the order of the error depends on $2\max\{H_{1},H_{2}\}$ instead of $H=H_{1}+H_{2}$ (cf. Theorems \ref{CLT_hat_rho_eta}, \ref{CLT_hat_rho_eta32}, \ref{th:nogaus}). So, the order of the error is the same for $\hat{\rho}_{n}$ and $\hat{\rho}_{n,c}$, except when  $H_{1}\neq H_{2}$ and  $\max\{H_{1}, H_{2}\}>3/4$, in which case the estimator based on covariances should perform better than the one based on correlations, in terms of the order of the error.
		\end{remark}

\begin{remark}

In Theorem \ref{th:nogaus}, if $\rho$ and $\eta_{12}$ are such that
			$$
			\sum_{i_1, i_2, i_3, i_4=1}^2 \sum_{\underset{i_2\neq j_1,i_3\neq j_2, i_4\neq j_{3}, i_1\neq j_4}{j_1,\ldots, j_4=1}}^2\int_{[0,1]^4}  z_{i_1 j_1}(x_1, x_2) z_{i_2 j_2}(x_2, x_3) z_{i_3, j_3}(x_3, x_4)z_{i_4 j_4}(x_4, x_1) dx =0,
			$$
			then $f_\rho=f_\eta=0$ and 
			\begin{align}\label{Norm>32} 
			&n^{2-H}(\hat \rho_n-\rho, \hat \eta_{12,n}-\eta_{12})\overset{d}{\to } ( N_{\rho}, N_{\eta})
			\end{align}
			where $(N_\rho, N_{\eta})\sim N(0, \Sigma)$ is a 2-dimensional Gaussian variable, with covariance matrix given by
			\begin{align*}
				&\Sigma_{11}=\lim_{n\to \infty}\Var(n^{2-H}(\hat\rho_n-\rho)), \\
				&\Sigma_{22}=\lim_{n\to \infty}\Var(n^{2-H}(\hat\eta_{12,n}-\eta_{12})),\\
				& \Sigma_{12}=\Sigma_{21}=\lim_{n\to \infty} n^{4-2H}\Cov(\hat\rho_n-\rho,\hat\eta_{12,n}-\eta_{12}).
			\end{align*}
In Theorem \ref{th:nogaus} we expect this not to be the case (so, $f_\rho$ and $f_\eta$ are not $0$) for most of the choices of $\rho$ and $\eta_{12}$. Indeed, there is no reason to assume the Gaussian asymptotics in \eqref{Norm>32} is in force when $H>3/2$. This is indirectly confirmed by our numerical experiments, which display a non-Gaussian, asymmetric limit distribution when $H>3/2$ (see Figure \ref{fig:nclt_lf}). 
\end{remark}

		\begin{remark}
		We define in \eqref{hat_rho_n} and \eqref{hat_eta_n} estimators for $\rho$ and $\eta_{12}$ based on \eqref{rho_s} and \eqref{eta_s}. One main difference of these estimators compared to the ones defined in \eqref{estRHO} and \eqref{estETA} is that they do not depend on $\a_1$ and $\a_2$. Since we estimate the correlation parameters $\rho$ and $\eta_{12}$ supposing the parameters of the one-dimensional marginals to be known, this can be useful if the previous estimate of $\a_1$ and $\a_2$ is poor, as for example in the case of volatility time series (see \cite{WXY23,dgp.eco}).

For $\eta_{12}$, when $H>1$, we could consider the linear term in $s$ and invert it, resulting in a different formulation.  However, this estimator would depend on $\a_1$ and $\a_2$. Consequently, in the high frequency setting, for the estimator for $\eta_{12}$ we confine our discussion to  $H<1$. Note that this should not be a significant limitation if we plan to use the estimators on log-volatility time series, where we expect $H=H_1+H_2<1$, see for example \cite{GJR18}. Moreover, we exclude the case $H=1$ from our analysis, because for this singular value of the $H$ parameter the time scaling is different and involves a logarithm, see Lemma \ref{shorttime}. 
		\end{remark}

\begin{remark}[On the high-frequency and long-span hypothesis] 
Assumption \ref{AssumptionDelta_a} is common in statistics of continuous time processes. Part 1) has the clear interpretation that the time lag between observations has to vanish as $n\to \infty$, which is the so called high frequency observations setting, while part 2) has the interpretation that the time horizon $n\Delta_n$ goes to infinity, which is the so called long-span asymptotics. In our case, both are in force, as for example in \cite{kessler,alaya_kebaier_2013,amorino_glotier,mazzonetto2020drift}. Assumption \ref{AssumptionDelta_b} is stronger and requires that the time lag vanishes fast enough with respect to the speed at which the time horizon goes to infinity. Assumptions of this type (in particular, analogous to part 3)) are also common when proving asymptotic normality and speed of convergence results;  Part 4) is similar in spirit, but it involves the Hurst exponent $H$, reflecting the fractional nature of our setting. See again \cite{kessler,alaya_kebaier_2013,amorino_glotier,mazzonetto2020drift}. 
\end{remark}

\begin{remark}
We develop the asymptotic theory of the estimators in \eqref{rho_s} and \eqref{eta_s} under the high frequency hypothesis $\Delta_{n} \to 0$. However, the estimator should work if the linear relation between $\Cov(Y_t^i, Y_{t+s}^j)$ and $s^H$ is (approximately) in force in the data. For example, for certain realized volatility time series we observe linearity for time lags up to 90 days in \cite{dgp.eco}.
\end{remark}

\section{Simulation of the process and implementation of the estimators}\label{sec:numerics}

In this section we evaluate the asymptotic results of Section \ref{sec:estimation} with a Monte Carlo study on trajectories of finite length. It is possible to simulate the mfOU process exactly using the explicit covariance function given in Theorem \ref{prop_2fou_section} and the Cholesky method for multivariate Gaussian random variables. However, when simulating very long trajectories, we encounter some instabilities with this approach due to the numerical integration needed to compute the covariance.  Alternatively, the process can be simulated using the Euler-Maruyama scheme, after having simulated the underlying mfBM (for example using the 
algorithm\footnote{We downloaded the code for simulating the mfBm at https://sites.google.com/site/homepagejfc/software}
 in \cite{multivar}). We do not encounter any difficulty in generating arbitrarily long trajectories with this second method.  

The figures we present here are obtained with the exact simulation method, which performs well with 
our parameter choices for producing trajectories up to $T=400$. This time horizon seems long enough to observe the long time asymptotic behavior, while avoiding the introduction of the discretization error of the Euler-Maruyama scheme. One can reduce this discretization error using finer partitions in the simulation than in the observation grid, at the price of increasing the computational load. We present here only the results based on the exact simulation. However, we obtained comparable results in the approximate simulation setting. We also obtained similar results for the correlations-based estimators in Remark \ref{rm:correlations}.

All figures are based on $M=10^5$ simulated trajectories of length $n=T_n=400$, for a given set of parameters. 

We first present the estimator based on low-frequency observations studied in Section \ref{Firstkind}. Theorem \ref{CLT_hat_rho_eta} and Theorem \ref{th:nogaus} establish the rate of convergence and the limit distributions of the rescaled estimation errors when $H<\frac{3}{2}$ and $H>\frac{3}{2}$, respectively. We reproduce these results numerically in Figures \ref{fig:clt_lf} and \ref{fig:nclt_lf}. \\
Figure \ref{fig:clt_lf} shows the estimation errors for $\hat\rho$ and $\hat\eta_{12}$ obtained  when $H=H_{1}+H_{2}=0.3$. On the left, the slope of the linear relationship of the logarithm of the root mean squared estimation error (RMSE) as a function of the logarithm of the length of the trajectory confirms the theoretical rate of convergence $\sqrt{n}$. On the right, we have the superposition of a centered Gaussian with the densities of the errors, rescaled with $\sqrt{n}$, for varying length of the trajectory.

\begin{figure}[ht]
\centering
\includegraphics[scale = 0.5]{"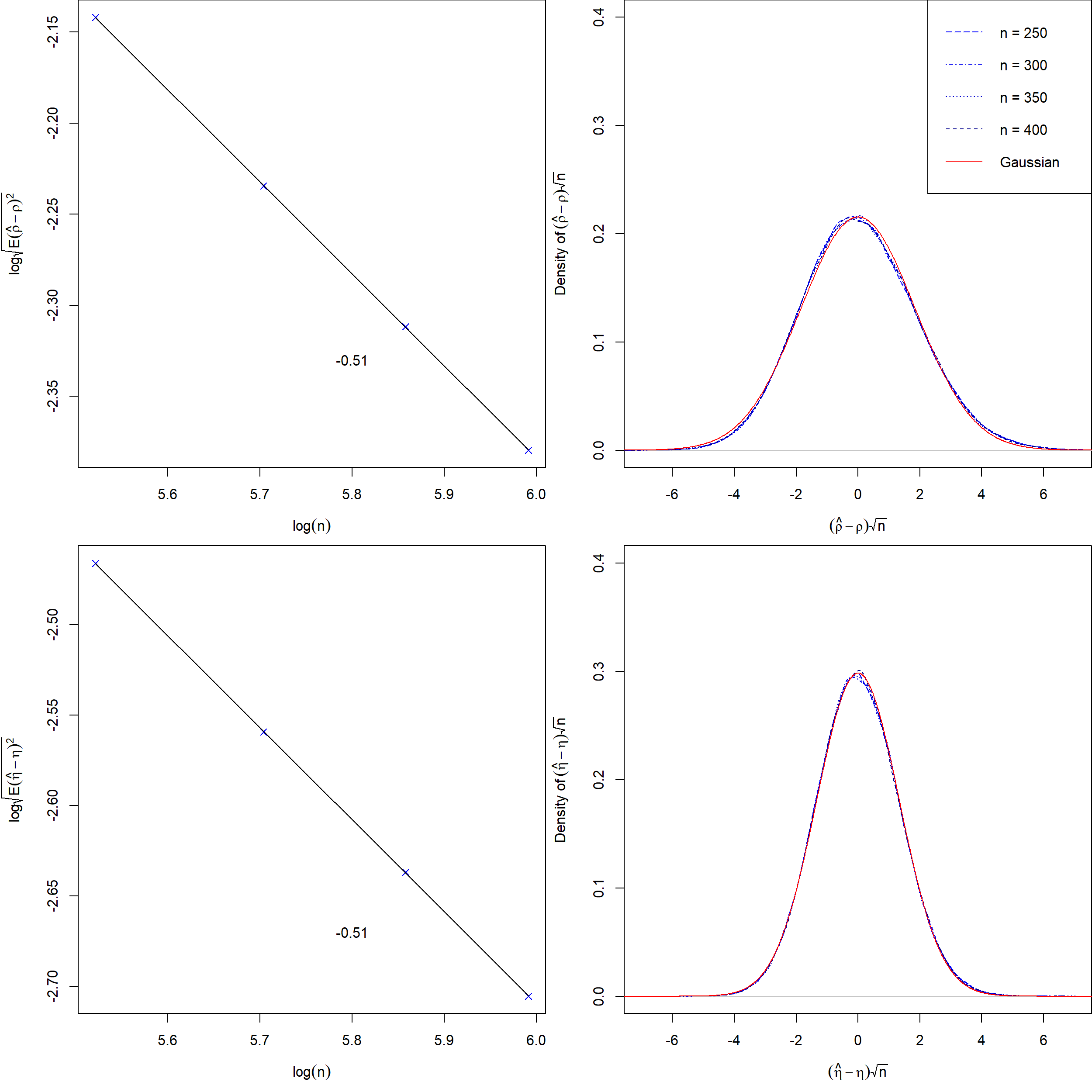"}
\captionof{figure}{Logarithm of the RMSE vs logarithm of the length of the trajectory (left) and superposition of the densities of the rescaled estimation errors of the low frequency estimator for varying length of the trajectory (right) for $\hat\rho$ (top) and $\hat\eta$ (bottom) - Simulation parameters: $\rho = 0.5,\ \eta = 0.2,\ H_1=0.1,\ H_2 = 0.2,\ \alpha_1=\alpha_2=0.5,\ \nu_1=\nu_2=1,\ n=T_n=400,\ M = 10^5$.
\label{fig:clt_lf}}
\end{figure}

A similar display in Figure \ref{fig:nclt_lf} for $H=H_{1}+H_{2}=1.7$ validates the results in Theorem \ref{th:nogaus}. Indeed, the estimated rate of convergence for $\hat\rho$ is now close to $n^{H-2}$ (it remains higher for $\hat\eta_{12}$) and the limit densities for both $\hat\rho$ and $\hat\eta$ are asymmetric. 
The densities overlap when rescaled by $n^{H-2}$ and appear skewed, confirming the convergence in law to a non Gaussian random variable. \\

\begin{figure}[ht]
\centering
\includegraphics[scale = 0.5]{"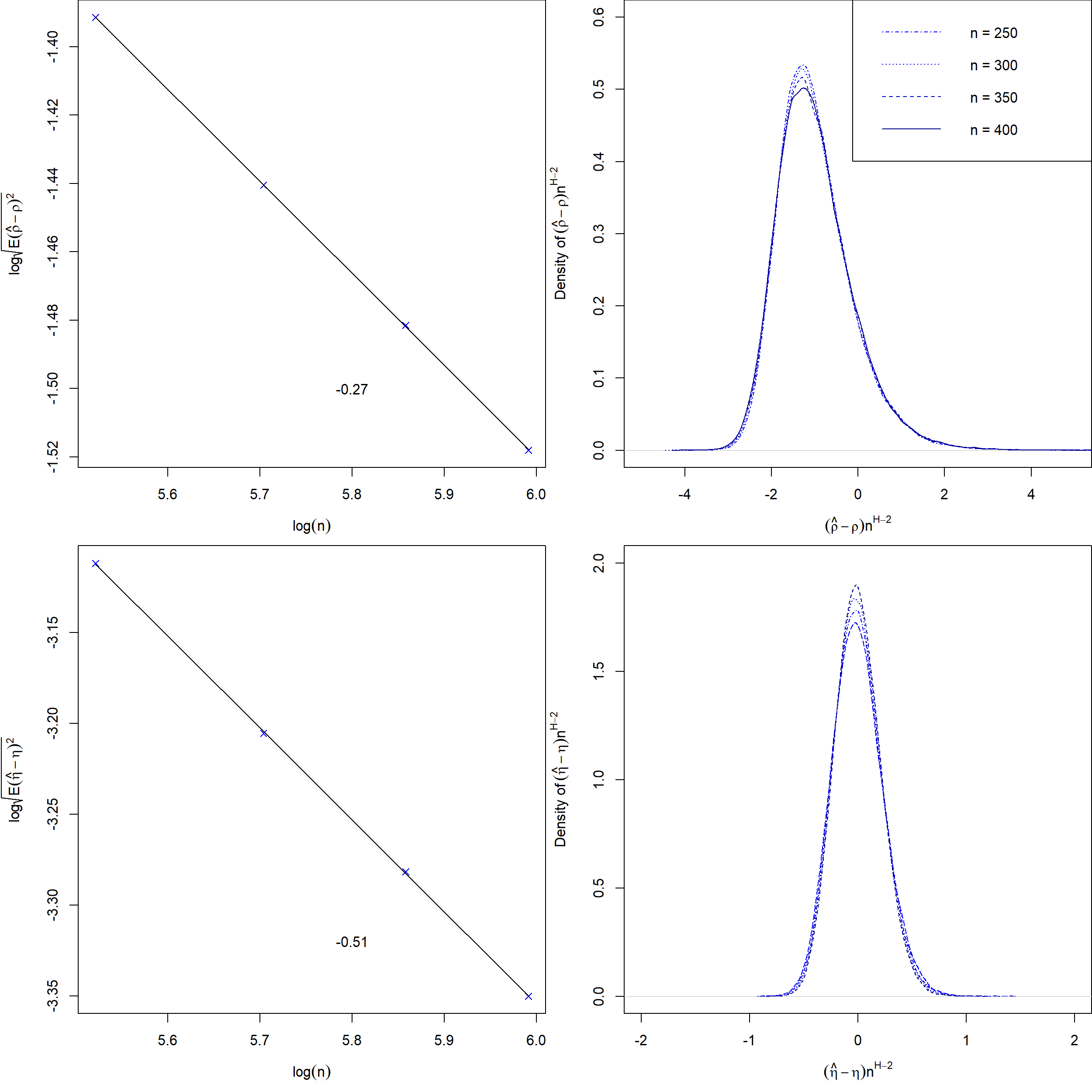"}
\captionof{figure}{Logarithm of the RMSE vs logarithm of the length of the trajectory (left) and superposition of the densities of the rescaled estimation errors of the low frequency estimators for varying length of the trajectory (right) for $\hat\rho$ (top) and $\hat\eta$ (bottom) - Simulation parameters: $\rho = 0.5,\ \eta = 0.2,\ H_1=0.8,\ H_2 = 0.9,\ \alpha_1=\alpha_2=0.5,\ \nu_1=\nu_2=1,\ n=T_n=400,\ M = 10^5$.
\label{fig:nclt_lf}}
\end{figure}

Figure \ref{fig:conv_rates_lf} shows the estimated rates of convergence of the estimation errors for varying values of $H$. The Monte Carlo estimates are obtained using the linear relationship between log-RMSE and the logarithm of the number of observations, as on the left-hand side of Figure \ref{fig:clt_lf} and Figure \ref{fig:nclt_lf}. The broken line reflects the prediction of Theorem \ref{CLT_hat_rho_eta} ($\sqrt{n}$ when $H\le\frac{3}{2}$) and Theorem \ref{th:nogaus} ($n^{2-H}$ when $H>\frac{3}{2}$), indicating that the convergence becomes very slow as $H\to 2$. The theoretical rate is matched closely by $\hat\rho$, while for  $H>\frac{3}{2}$,  in our finite sample experiment, $\hat\eta_{12}$ seems to converge faster than expected .  
\begin{figure}[ht]
\centering
\includegraphics[scale = 0.5]{"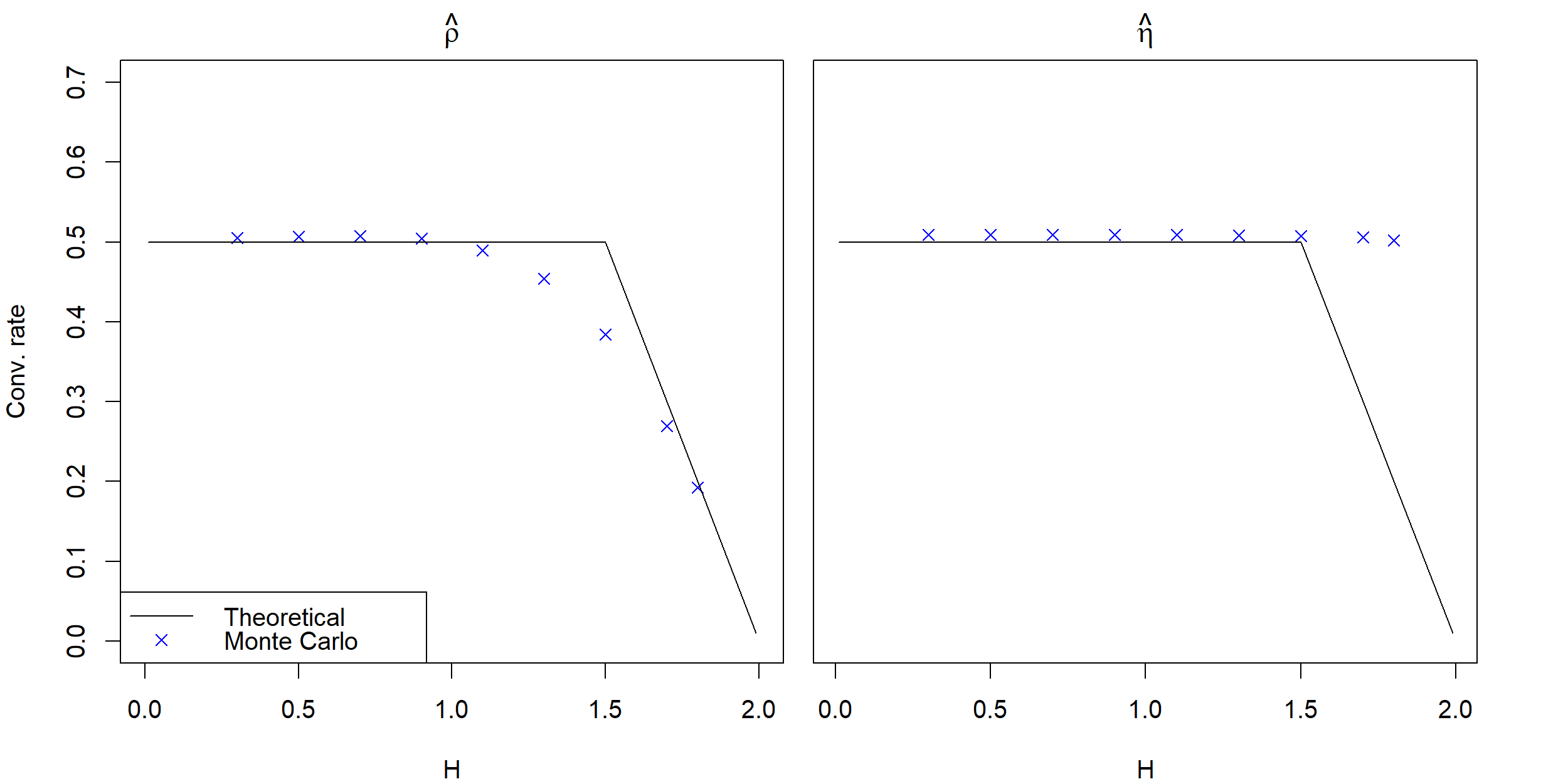"}
\captionof{figure}{Rates of convergence in the central and non-central limit theorems for $\hat\rho$ (left) and $\hat\eta$ (right) based on low-frequency observations - Simulation parameters: $\rho = 0.5,\ \eta = 0.2,\ H_2 = H_1+0.1,\ H = H_1+H_2,\ \alpha_1=\alpha_2=0.5,\ \nu_1=\nu_2=1,\ n=T_n=400,\ M = 10^5$.
\label{fig:conv_rates_lf}}
\end{figure}

\medskip

We now present the estimators  based on high-frequency observations studied in Section \ref{Secondkind}. In this setting, Theorem \ref{CLT_tilde_rho} only gives the speed of convergence and asymptotic normality for $\tilde\rho$ when $H\le\frac{3}{2}$. To approach the high frequency setting, instead of shrinking the time lag between observations we consider a small mean reversion parameter $\alpha_1=\alpha_2=0.1$ (this can be seen to be equivalent, cf. \cite{dgp.eco}), and we still take a fixed time lag. Figure \ref{fig:clt_hf} shows indeed that the central limit theorem holds for $\tilde\rho$ when $H=0.5$, analogously to the previous figures.  \\

\begin{figure}[ht]
\centering
\includegraphics[scale = 0.5]{"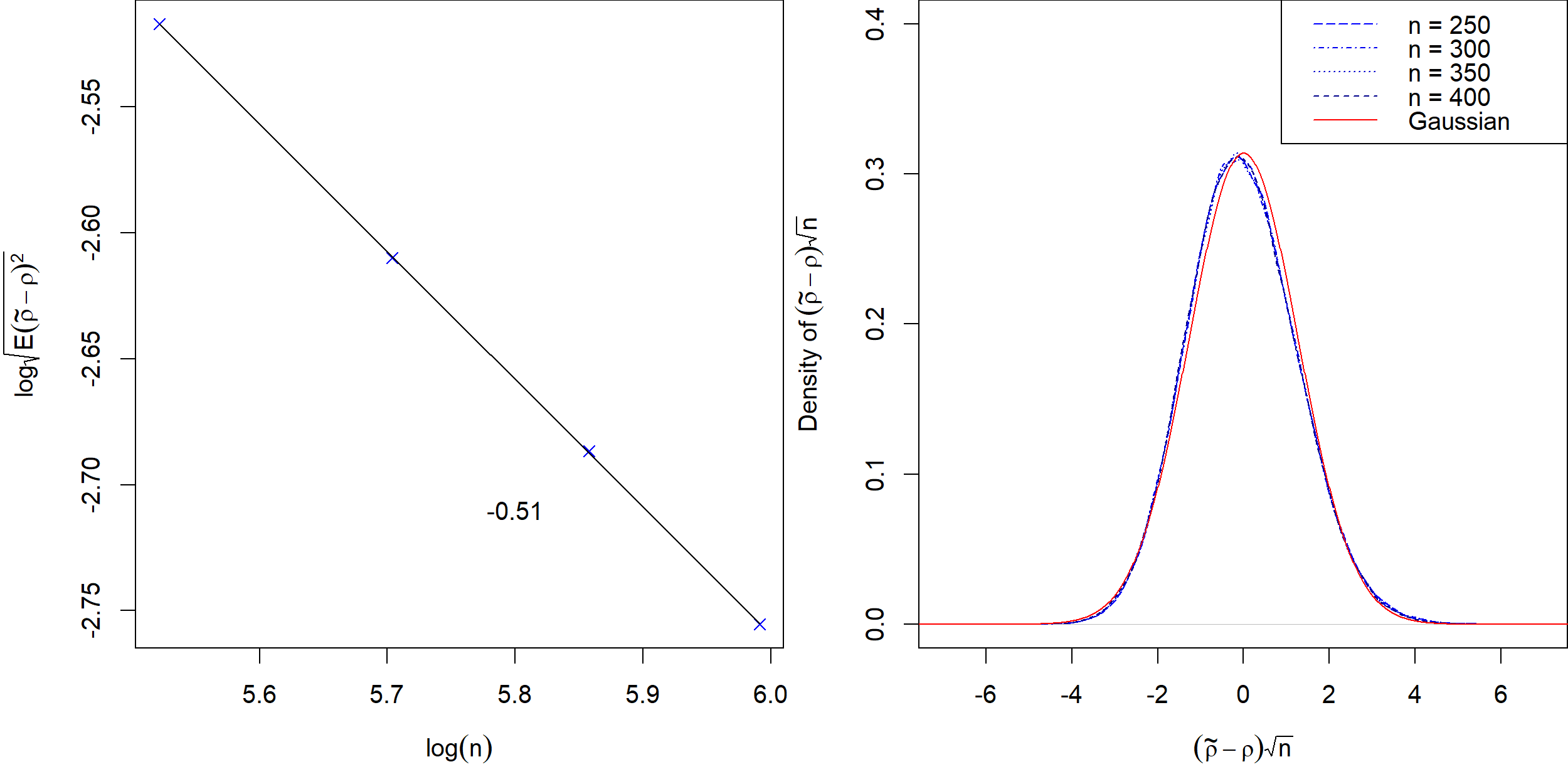"}
\captionof{figure}{Logarithm of the RMSE vs logarithm of the length of the trajectory (left) and superposition of the densities of the rescaled estimation errors for varying length of the trajectory (right) for $\tilde\rho$ - Simulation parameters: $\rho = 0.5,\ \eta = 0.2,\ H_1=0.2,\ H_2 = 0.3,\ \alpha_1=\alpha_2=0.1,\ \nu_1=\nu_2=1,\ n=T_n=400,\ M = 10^5$.
\label{fig:clt_hf}}
\end{figure}

Figure \ref{fig:conv_rates_hf} shows the convergence rates of the error for $\tilde\rho$ and $\tilde\eta$ when $H$ varies. Those of $\tilde\rho$ are close to $\sqrt{n}$ when $H\le\frac{3}{2}$ and seem close to $n^{H-2}$ when $H>\frac{3}{2}$. The latter is not covered by our asymptotic theory but it remains a reasonable conjecture by analogy with the low-frequency case $\hat\rho$. The speed of convergence for $\tilde\eta_{12}$ is not covered by our asymptotic theory. Numerically, the estimator seems to converge even when $H>1$, in which case even the consistency was not established. However, the speed of convergence seems lower than the one for $\tilde \rho$, for all values of $H$.  \\

\begin{figure}[ht]
\centering
\includegraphics[scale = 0.5]{"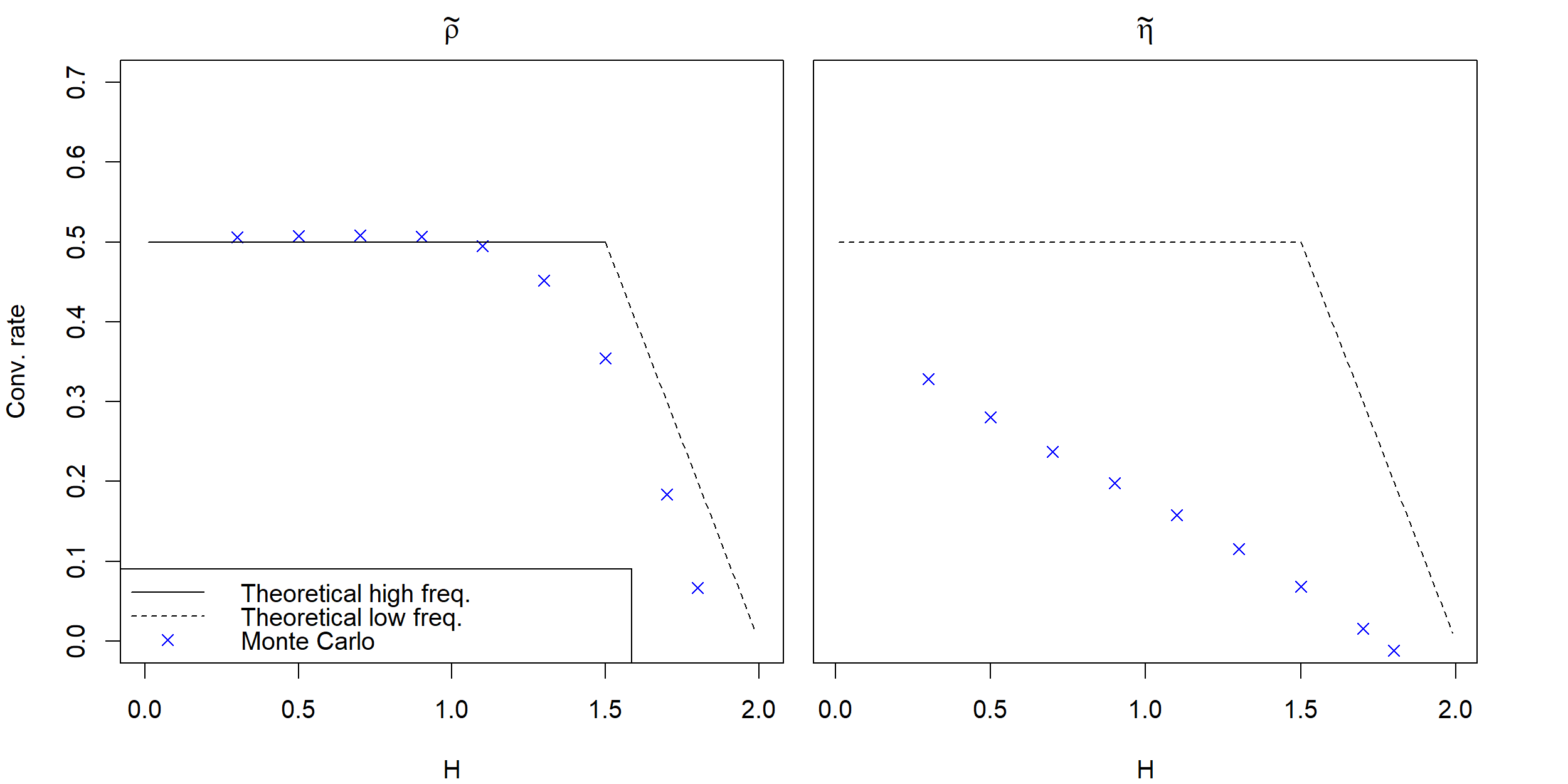"}
\captionof{figure}{Rates of convergence for $\tilde\rho$ (left) and $\tilde\eta$ (right) - Simulation parameters: $\rho = 0.5,\ \eta = 0.2,\ H_2 = H_1+0.1,\ H = H_1+H_2,\ \alpha_1=\alpha_2=0.1,\ \nu_1=\nu_2=1,\ n=T_n=400,\ M = 10^5$. The continuous line is the theoretical rate of convergence of this (high frequency) estimator, where we could compute it. We have reported as dotted line also the speed of convergence from Figure \ref{fig:conv_rates_lf} for the sake of completeness.
\label{fig:conv_rates_hf}}
\end{figure}

Due to the fact that the first two terms in the expansion of the cross-covariance in Lemma \ref{shorttime}, are the same terms that one obtains expanding for $(\alpha_1,\alpha_2)\to 0$, and in the limit for $(\alpha_1,\alpha_2)\to 0$ we have functional convergence of the mfOU to the mfBM (shown in \cite{dgp.eco}), we experiment with $\tilde\rho$ and $\tilde\eta$ in estimating the correlation parameters of the mfBM, i.e. the case $(\alpha_1,\alpha_2)=(0,0)$. Note that this process is not stationary.

The results that we obtained on simulations in this setting are favourable for small $H$. Indeed, as shown in Figure \ref{fig:conv_rates_mfbm}, the estimators $\tilde\rho$ and $\tilde\eta_{12}$ seem to work for $0 < H=H_{1}+H_{2} < 1$, which is half the range of the case for mfOU. In addition, the rate of convergence suggested by our numerical experiments seems to be $\sqrt{n}$ when $H<\frac{1}{2}$ and $n^{1-H}$ when $\frac{1}{2}\le H\le 1$. 

Figure \ref{fig:conv_rates_mfbm} suggests that our high-frequency estimator could be a good alternative to the estimator proposed in \cite{AC11} for the correlation parameters of the mfBM when $H$ is believed to be small, which for example is usually the case in log volatility time series \cite{GJR18}, since in the implementation in  \cite{AC11} the estimator for $\eta$ did not seem to clearly identify the sign of the parameter due to the high sensitivity to the choice of the dilation parameter in the filtering step. 

\begin{figure}[ht]
\centering
\includegraphics[scale = 0.5]{"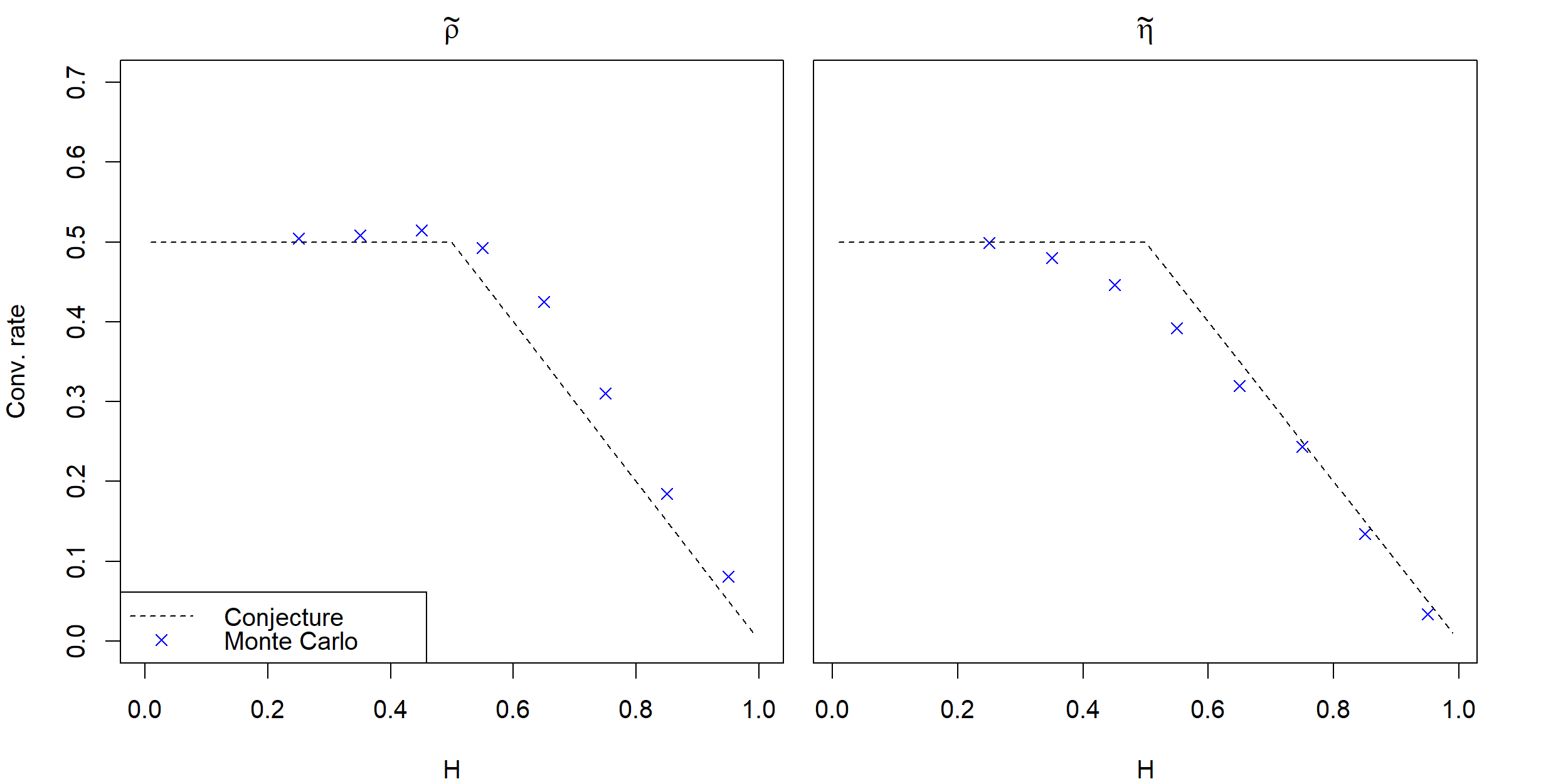"}
\captionof{figure}{Rates of convergence for $\tilde\rho$ and $\tilde\eta$ applied to the mfBM - Simulation parameters: $\rho = 0.5,\ \eta = 0.2,\ H_2 = H_1+0.05,\ H = H_1+H_2,\ \nu_1=\nu_2=1,\ n=T_n=400,\ M = 10^5$
\label{fig:conv_rates_mfbm}}
\end{figure}

For the sake of clarity, in all the plots in this section the rates of convergence are shown as a function of $H=H_{1}+H_{2}$, with $H_1-H_2$ fixed. However, we carried out similar experiments on a triangular grid for $H_1$ and $H_2$ and the results are consistent with those presented here. 

\section{Summary and outlook}

In this paper,  we have introduced a multivariate version of the fractional Ornstein Uhlenbeck (mfOU) process, extending the univariate fOU to multiple dimensions, building on the multivariate fractional Brownian motion (mfBm) framework from \cite{lav09,multivar}. This mfOU is a stationary, ergodic Gaussian process where each component has its own Hurst exponent, allowing for different degrees of roughness across dimensions. The cross-correlation between components is governed by two distinct parameters: the usual contemporaneous correlation $\rho$, and also a parameter $\eta$ that captures time-reversibility or asymmetry in the inter-component dynamics. We thoroughly characterized the process, analyzing its 
covariance and correlations structure and its short- and long-time asymptotics. 

We have then proposed two types of estimators for $\rho$ and $\eta$, constructed from discrete observations. Both assume known marginal fOU parameters and target the cross-correlation parameters. The first is obtained via inversion of the correlation function. Its fixed-lag, long-span asymptotic behavior (Gaussian or non-Gaussian) depends on the combined Hurst exponents. The second is obtained via inversion of the asymptotic correlation function, and it is a ``high-frequency'' estimator in the sense that we observe its asymptotic behavior in the vanishing-lag, long-span setting, when the Hurst exponents are in a certain range. The second estimator has the advantage of not depending on marginal mean-reversion parameters.

The analysis relies on Malliavin calculus and Stein's method to handle Gaussian functionals. We also performed a thorough Monte Carlo simulation study, confirming the asymptotic theory and supporting a conjecture on applying the high-frequency estimator to the pure mfBm case.

The mfOU process can be effectively used in Quantitative Finance to model a system of log-volatilities as shown in \cite{dgp.eco}. There, we estimate the model on multivariate realized volatility time series obtained from 5-minute price increments with a Generalized Method of Moments that identifies all the parameters at once by leveraging on a number of lags of the cross-covariance function. A related study was simultaneously carried out by \cite{bibinger}, where the authors employ a mfBm to forecast multivariate realized volatility time series. A possible extension would be modeling volatility itself with the exponential of the mfOU process within an asset pricing model. At that point, it might be possible to obtain a parametric covariance function of the asset prices, which can be useful in mainstream financial applications, e.g. solving the Markowitz problem in closed form.

\section{Proofs and technical results}\label{sec:proofs}
In this section we collect the proofs of our results. 

\begin{proof}[Proof of Theorem \ref{StatC}]
	
	Starting from the mfBm $(B^{H_1},\ldots, B^{H_d})$, let us define $B_t^{h,H_i}=B_{t+h}^{H_i}-B_h^{H_i}$, for $i=1,\ldots, d$, for all $h>0$. From the stationarity of the increments we have that
	$$
	\E[B_t^{h, H_i} B_{s}^{h, H_j}]=\E[(B_{t+h}^{H_i}-B_h^{H_i})(B_{s+h}^{H_j}-B_h^{H_j})]=\E[B_t^{H_i}B_s^{H_j}].
	$$
	Then, the vector $(B^{h, H_1},\ldots, B^{h, H_d})$ is a mfBm with the same covariance structure as $(B^{H_1},\ldots, B^{H_d})$. It follows that $dB_t^{h,H_i}=d B_{t+h}^{H_i}$ and so
	\begin{align*}
		\Cov(Y_{t+s}^i, Y_t^j)&=\E\Big[ \nu_1\int_{-\infty}^{t+s} e^{-\alpha_i(t+s-u)} dB_u^{H_i}\nu_2 \int_{-\infty}^{t} e^{-\alpha_j(t-v)} dB_v^{H_j}\Big]\\
		&=\E\Big[ \nu_1\int_{-\infty}^{t+s} e^{-\alpha_i(t+s-u)} dB_{u-t}^{t, H_i}\nu_2 \int_{-\infty}^{t} e^{-\alpha_j(t-v)} dB_{v-t}^{t, H_j}\Big]\\
		&=\E\Big[\nu_1\int_{-\infty}^s e^{-\alpha_i (s-u)} dB_u^{H_i} \int_{-\infty}^0 e^{\alpha_j v} dB_v^{H_j} \Big]\\
		&=\Cov(Y_s^i, Y_0^j).
	\end{align*}
	Then the mfOU is stationary. 
	Now, we can extend Lemma \ref{repLemma21} from the univariate fBm to the mfBm. Let $-\infty\leq a <b \leq c < d<+\infty$. Then, for $i,j\in \{1,\ldots, d\}, i\neq j$, 
	\begin{itemize}
		\item[1)] if $H_{ij}=H_i+H_j\neq 1$ we have
		\begin{align}\label{2}
			\E\Big[\int_a^b  e^{\a_j u}dB_u^{H_j}   &\int_c^d  e^{\a_i v}dB_v^{H_i} \Big]=H_{ij}(H_{ij}-1)\frac {\rho_{ij}+\eta_{ij}}{2}\int_c^d e^{\a_i v} \Big( \int_a^b  e^{\a_j u}(v-u)^{H_{ij}-2} du \Big) dv;
		\end{align}
		
		\item[2)]
		if $H_{ij}=H_i+H_j=1$ we have
		\begin{align}
			\E\Big[\int_a^b  e^{\a_j u}dB_u^{H_j}   &\int_c^d  e^{\a_i v}dB_v^{H_i} \Big] =-\frac{\eta_{ij}}{2}\int_c^d e^{\a_i v} \Big( \int_a^b  e^{\a_j u}(v-u)^{-1} du\Big) dv.
		\end{align}
		
	\end{itemize} 
	The above formulas follow from Lemma \ref{repLemma21}. 
	It follows that, for $t>s$, 
	\begin{align*}
	&	r_{ij}(t-s)=\\ &=\Cov(Y^i_{t}, Y^j_{s})=\Cov(Y^i_{t-s}, Y^j_0)=\nu_i\nu_j \E\Big[\int_{-\infty}^0 e^{\a_j u} dB^{H_j}_u \int_{-\infty}^{t-s} e^{-\a_i(t-s-v)} dB_v^{H_i}\Big]\\
		&=\nu_i\nu_j e^{-\a_i (t-s)}\Big(\E\Big[\int_{-\infty}^0  e^{\a_j u} dB^{H_j}_u \int_{-\infty}^0 e^{\a_i v} dB_v^{H_i}\Big]+\E\Big[\int_{-\infty}^0 e^{\a_j u} dB^{H_j}_u \int_{0}^{t-s} e^{\a_i v} dB_v^{H_i}\Big]\Big)\\
		&= e^{-\a_i (t-s)} \Cov(Y^i_0,Y^j_0) +\nu_i\nu_j e^{-\a_i (t-s)} H_{ij}(H_{ij}-1)\frac{\rho_{ij}+\eta_{ij}}{2} \int_0^{t-s} e^{\a_i v} \Big( \int_{-\infty}^0 e^{\a_j u}(v-u)^{H_{ij}-2} du\Big) dv.
	\end{align*}
	When $H_{ij}=1$, analogously 
	\begin{align*}
		r_{ij}(t-s)&=\Cov(Y^i_{t}, Y^j_{s})=\Cov(Y^i_{t-s}, Y^j_0)=\nu_i\nu_j \E\Big[\int_{-\infty}^0 e^{\a_j u} dB^{H_j}_u \int_{-\infty}^{t-s} e^{-\a_i(t-s-v)} dB_v^{H_i}\Big]\\
		&=\nu_i\nu_j e^{-\a_i(t- s)}\Cov(Y_0^i, Y_0^j)-\nu_i\nu_j e^{-\a_i (t-s)}\frac{\eta_{ij}}{2}\int_{-\infty}^{0} e^{\a_i v} \Big( \int_{0}^{t-s}  e^{\a_j u}(v-u)^{-1} du\Big) dv.
	\end{align*}
\end{proof}

\begin{proof}[Proof of Theorem \ref{cov}]
	We have that (see \cite{G24}[Lemma 5.4.4] for the details of the computation)
	\begin{align*}
		\Cov(Y_t^{i},Y_{t+s}^{j})&=\Cov(Y_0^{i},Y_{s}^{j})=\E\Big[\nu_i\int_{-\infty}^0 e^{\a_j u}dB_u^{H_i}\nu_j\int_{-\infty}^s e^{-\a_j(s-v)}dB_v^{H_j}\Big]\\
		&=\nu_i \nu_j e^{-\a_j s}\E\Big[ \int_{-\infty}^0 e^{\a_i u}dB_u^{H_i}\int_{-\infty}^{\frac{1}{\a_j}} e^{\a_j v}dB_v^{H_j}+\int_{-\infty}^0 e^{\a_i u}dB_u^{H_i}\int_{\frac{1}{\a_j}}^s e^{\a_j v}dB_v^{H_j}\Big]\\
		&=\nu_i\nu_j e^{-\a_j s}d_{H_{ij}}\int_{\frac{1}{\a_j}}^s e^{\a_j v} \int_{-\infty}^0  e^{\a_i u}(v-u)^{H_{ij}-2} du dv+O(e^{-\a_j s}).
	\end{align*}
	The constant $d_{H_{ij}}$ is equal to $H_{ij}(H_{ij}-1)\frac{\rho_{ij}-\eta_{ij}}{2}$ when $H_{ij}\neq 1$ and $d_{H_{ij}}=\frac{\eta_{ij}}{2}$ when $H_{ij}=1$. By employing the change of variables $y = v - u$ and $z = v + u$, we derive
	\begin{align*}
		\Cov(Y_t^{i},Y_{t+s}^{j})&=\frac{\nu_i \nu_j d_{H_{ij}}}{2} e^{-\a_j s}\Big( \int_{\frac{1}{\a_j}}^s y^{H_{ij}-2} e^{\frac{\a_j-\a_i}{2}y}  \int_{\frac{2}{\a_j}-y}^{y} e^{\frac{\a_j+\a_i}{2}z} dzdy+\\
		&+\int_s^{+\infty} y^{H_{ij}-2} e^{\frac{\a_j-\a_i}{2}y} \int_{\frac{2}{\a_j}-y}^{2s-y} e^{\frac{\a_j+\a_i}{2}z} dzdy \Big)+O(e^{-\a_j s})\\
		&=\frac{\nu_i \nu_j d_{H_{ij}}}{(\a_j+\a_i)}e^{-\a_j s}\Big( \int_{\frac{1}{\a_j}}^s y^{H_{ij}-2}e^{\a_j y}dy +e^{(\a_j+\a_i)s}\int_s^{+\infty} y^{H_{ij}-2}e^{-\a_i y}dy\\
		& -e^{\frac{\a_j+\a_i}{\a_j}}\int_{\frac{1}{\a_j}}^{+\infty} y^{H_{ij}-2} e^{-\a_i y}dy \Big)+O(e^{-\a_j s})\\
		&=\frac{\nu_i \nu_j d_{H_{ij}}}{(\a_j+\a_i)}\Big( \frac{1}{\a_j^{H_{ij}-1}}e^{-\a_j s}\int_1^{\a_j s} y^{H_{ij}-2}e^y dy+\frac{1}{\a_i^{H_{ij}-1}} e^{\a_i s}\int_{\a_i s}^{+\infty} y^{H_{ij}-2}e^{-y}dy\Big)+O(e^{-\a_j s})
	\end{align*}
	and, by Lemma 2.2 in \cite{cheridito2003}, we have
	\begin{align*}
		\Cov(Y_t^{i},Y_{t+s}^{j})&=\frac{\nu_i \nu_j d_{H_{ij}}}{(\a_j+\a_i)} \Big( \frac{1}{\a_j}s^{H_{ij}-2}+\sum_{n=1}^N \frac{(-1)^n}{\a_j^{n+1}}  \big[ \prod_{k=0}^{n-1} (H_{ij}-2-k) \big]s^{H_{ij}-2-n}\\
		&+\frac{1}{\a_i}s^{H_{ij}-2}+\sum_{n=1}^N \frac{1}{\a_i^{n+1}}  \big( \prod_{k=0}^{n-1} (H_{ij}-2-k) \big)s^{H_{ij}-2-n}\Big)+O(s^{H_{ij}-N-3})\\
		&=\frac{\nu_i\nu_j(\rho_{ij}-\eta_{ij})}{(\a_i+\a_j)}\sum_{n=0}^N \Big(\frac{(-1)^n}{\a_j^{n+1}}+\frac{1}{\a_i^{n+1}}\Big)  \big( \prod_{k=0}^{n+1} (H_{ij}-k) \big)s^{H_{ij}-2-n}+O(s^{H_{ij}-N-3}).
	\end{align*}
	Exchanging $Y^i$ and $Y^j$ we obtain the general form of the cross-covariance in \eqref{decCr}. For $H_{ij}=1$, analogous computations lead to \eqref{decCr1}.
\end{proof}

\begin{proof}[Proof of Lemma \ref{cross-ergod}]
	For $\tau>0$ we define 
	$
	\hat r_{ij}(\tau):=\frac 1{2T} \int_{-T}^T (Y_{t+\tau}^i-\E[Y_{t+\tau}^i])(Y_{t}^j-\E[Y_t^j]) dt
	$ 
	and we prove that $\hat r_{ij}^T(\tau)\underset{T\to+\infty}\to \E[Y_{\tau}^i Y_0^j]$ in probability. Clearly $\E[\hat r_{ij}^T(\tau)]=\E[Y^i_{\tau}Y^j_0]$. We study the variance.
	\begin{align*}
		\Var(\hat r_{ij}^T)&=\frac 1{4T^2}\int_{-T}^T \int_{-T}^T\Big( \Cov(Y_t^i, Y_s^i)\Cov(Y_t^j, Y_s^j)+\Cov(Y_{t+u}^i, Y_s^j)\Cov(Y_t^j, Y_{s+u}^i)	\Big)dt ds\\
		&=\frac 1T \int_{-2T}^{2T} \Big(1-\frac{|t|}{2T}\Big)\Big(\Cov(Y_t^i, Y_0^i)\Cov(Y_t^j, Y_0^j)+\Cov(Y_{t+\tau}^i,Y_0^j)\Cov(Y_{t-\tau}^i, Y_0^j)\Big)dt. 
	\end{align*}
	By Theorem \ref{Decay1} and Theorem \ref{cov}, when $H_{ij}=H_i+H_j<\frac 32$ the integrand is in $L^1(\R)$, then $\Var(\hat r_{ij}^T)\to 0$ as $T\to +\infty$. If $H_{ij}=\frac 32$, the integrand is $O(1/T)$ as $T\to \infty$, then the integral is $O(\log T)$, whereas for $H_{ij}>\frac 32$ the integral is $O(T^{2H_{ij}-3})$, then $\Var(\hat r_{ij}^T)=O(T^{2H_{ij}-4})$. In each case $\Var(\hat r_{ij}^T)\to 0$ as $T\to +\infty$. 
\end{proof}

\noindent\begin{proof}[Proof of Lemma \ref{shorttime}]
	Since $Y_{t+s}^j=e^{-\a s} Y_t^j-\a_j\nu_j e^{-\a_js} \int_{t}^{t+s} e^{\a_j u} B_u^{H_j} du$, we have
	\[
	\begin{split}
	&\Cov(Y^i_t, Y^j_{t+s})\\&=e^{-\a_j s} \Cov(Y^i_0,Y^j_0) -\nu_i\nu_j e^{-\a_j s} H_{ij}(1-H_{ij})\Big(\frac{\rho_{ij}-\eta_{ij}}{2}\Big) \int_0^s e^{\a_j v}  \int_{-\infty}^0 e^{\a_i u}(v-u)^{H_{ij}-2} du dv
	\end{split}
	\]
	for $t,s \in \R, s>0$. The first step of the proof is to develop the integral in the above equation. Let $i,j\in \{1,\ldots, d\}$, $i\neq j$, $H_{ij}\in(0,2)$. Using the suitable change of variables in the integral, and developing the integrand using Taylor's formula, we have that, for $H_{ij}\neq 1$ 
	
	\begin{align*}
		&\int_0^s dv\int_{-\infty}^0 e^{\a_i u +\a_j v}(v-u)^{H_{ij}-2} du\\
		&=\frac{s^{H_{ij}}}{H_{ij}(1-H_{ij})} - \frac{\a_i^{1-H_{ij}}\Gamma(H_{ij})}{1-H_{ij}}s+\frac{(H_{ij}\a_j+\a_i)}{H_{ij}(1-H_{ij})(H_{ij}+1)}s^{H_{ij}+1}\\
		&-\frac{(\a_i+\a_j)\a_i^{1-H_{ij}}}{2(1-H_{ij})}\Gamma(H_{ij}) s^2+\Big(\frac{\a_j^2}{6(1-H_{ij})} -\frac{(H_{ij}-3)\a_i\a_j}{6H_{ij}(1-H_{ij})}\\
		&+\Big(\frac{\alpha_i^2-\alpha_i\alpha_j+\alpha_j^2}{2(1-H_{ij})(2+H_{ij})}+\frac{\alpha_i\alpha_j}{H_{ij}(1-H_{ij})}+\frac{\alpha_i^2}{2H_{ij}(1+H_{ij})}\Big)s^{H_{ij}+2}+o(s^{H_{ij}+2}).
	\end{align*}
	while for $H_{ij}=1$
	$$
	\int_0^s  dv\int_{-\infty}^0 e^{\a_i u+\a_j v} (v-u)^{-1} du=-s\log s-\frac{\a_j-\a_i}{2}s^2\log s +o(s^2\log s).
	$$

	Denoting $K_{ij}= \frac{\rho_{ij}-\eta_{ij} }{2}$, we obtain
	\begin{align*}
		&\Cov(Y_t^i, Y_{t+s}^j)=\Cov(Y_0^i, Y_0^j)-K_{ij}\nu_i \nu_j s^{H_{ij}}+\Big(-\a_j \Cov(Y_0^i, Y_0^j)-\a_i^{1-H_{ij}} \Gamma(H_{ij}+1) \nu_i\nu_j K_{ij}\Big) s+ \\
		&-\frac{(\a_j-\a_i)\nu_i \nu_j }{H_{ij}+1} K_{ij} s^{1+H_{ij}}+\Big(\frac{\a_j^2}{2} \Cov(Y_0^i, Y_0^j)-\frac 12\nu_i\nu_j K_{ij} \Gamma(H_{ij}+1)(\a_j\a_i^{1-H_{ij}} -\a_i^{2-H_{ij}})\Big)s^2 -\\
		&+o(s^{\max{H_{ij}, H_{ij}+1}}) .
	\end{align*}

	When $H=H_i+H_j=1$, we have
	\begin{align*}
		&\Cov(Y_t^i, Y_{t+s}^j)-\Cov(Y_0^i, Y_0^j)\\
		&=-\a_j s \Cov(Y_0^i , Y_0^j)+o(s)+\nu_i\nu_j e^{-\a_j s} \frac{\eta_{ij}}{2}\int_0^s e^{\a_j v} \int_{-\infty}^0 e^{\a_i u} \frac 1{v-u} du dv +o(s^3)\\
		&=-\a_j s \Cov(Y_0^i , Y_0^j)+o(s)-\nu_i\nu_j \frac{\eta_{ij}}{2} s\log s +o(s^2\log s)\\
		&=-\nu_i\nu_j \frac{\eta_{ij}}{2} s\log s +o(s^2\log s).
	\end{align*}
\end{proof}

From now on, we prove the results in Section \ref{sec:estimation}. Therefore, as discussed, we can assume to be in dimension $d=2$. We also denote $H=H_{12}=H_1+H_2$ and $\rho_{12}=\rho_{21}=\rho$. 
Let us define
$$
\overline S_n=\frac{a_1}{n}\sum_{j=1}^n Y_{j}^1 Y_j^2+\frac{a_2}{n}\sum_{j=1}^{n} Y_{j+s}^1 Y_j^2+\frac{a_3}{n}\sum_{j=1}^n Y_{j}^1 Y_{j+s}^2
$$
and
\begin{align*}
	R_n=\overline S_n-S_n = \frac{a_2}{n}\sum_{j=n-s+1}^n Y_{j+s}^1 Y_j^2 +\frac{a_3}{n}\sum_{j=n-s+1}^n Y_j^1 Y_{j+s}^2. 
\end{align*}
We notice that, when $H<\frac 32$ 
\begin{align*}
 &\sqrt{n}\E[\overline S_n-S_n] \to 0\\
 &n\Var(\overline S_n- S_n) \to 0
\end{align*}
then $\sqrt{n}(\overline S_n-S_n){\to} 0$ in probability. When  $H=\frac 32$ then $\sqrt{{n}/{\log n}}(\overline S_n-S_n){\to} 0$ in probability. and when $H>\frac 32$ then $n^{1-H}(\overline S_n-S_n){\to} 0$ in probability.. Then we can prove the following results for $\overline S_n$ and they will hold for $S_n$ as well.  
\begin{proof}[Proof of Lemma 3.2]
		Computing the expectation of $\hat \rho_{n}$, we have
		\begin{align*}
			\E[\hat \rho_n]&= \frac{a_1(s)}{n}\sum_{j=1}^n \E[Y_j^1 Y_j^2]+\frac{a_2(s)}{n}\sum_{j=1}^{n-s} \E[Y_{j+s}^1 Y_j^2]+\frac{a_3(s)}{n}\sum_{j=1}^{n-s} \E[Y_{j}^1 Y_{j+s}^2]\\
			&=\frac{a_1(s)}{n}\sum_{j=1}^n \Cov(Y_0^1, Y_0^2)+\frac{a_2(s)}{n}\sum_{j=1}^{n-s} \Cov(Y_s^1,Y_0^2)+\frac{a_3(s)}{n}\sum_{j=1}^{n-s} \Cov(Y_0^1,Y_s^2)\\
			&=a_1(s)\, \Cov(Y_0^1, Y_0^2)+a_2(s)\frac{n-s}{n}\, \Cov(Y_s^1,Y_0^2)+a_3(s)\frac{n-s}{n}\, \Cov(Y_0^1,Y_s^2)\\
			&\to \rho.
		\end{align*}
		Similarly, it holds that $\E[\hat \eta_{12,n}]\to\eta_{12}$.
		
\end{proof}
From now on we denote $r_{11}(k)=\E[Y_k^1 Y_0^1]$, $r_{22}(k)=\E[Y_k^2Y_0^2]$, $r_{12}(|k|)=\E[Y_{|k|}^1 Y_0^2]$ and $r_{21}(|k|)=\E[Y_0^1 Y_{|k|}^2]$. 
\begin{proof}[Proof of Theorem \ref{VAR_SN}]

	The variance can be written as \begin{align*}
			&\Var(\sqrt n S_n)\\
			&=\frac { a_1^2}{n}\sum_{k,h=1}^n \Big( r_{11}(|k-h|) r_{22}(|k-h|)
			+r_{12}(|k-h|)r_{21}(|k-h|)\Big)+\\
			&+\frac { a_2^2}{n}\sum_{k,h=1}^n \Big( r_{11}(|k-h|) r_{22}(|k-h|)+
			r_{12}(|k+s-h|)r_{21}(|k-s-h|)\Big)\\
			&\cdots\\
			&+\frac { a_2 a_3}{n}\sum_{k,h=1}^n \Big( r_{11}(|k+s-h|) r_{22}(|k-s-h|)
			+r_{12}(|k-h|)r_{21}(|k-h|)\Big).
		\end{align*}
		We omit to write all the sums. Then, the variance of $S_n$ is a sum of sequences of the form 
		$$
		\frac 1{n}\sum_{k,h=1}^n r_{11}(|k-h|)r_{22}(|k-h|) \quad{\text{or }\,\,\,\,\, \frac 1{n}\sum_{k,h=1}^n r_{12}(|k-h|)r_{21}(|k-h|)}.
		$$
		We can have that the variables in functions $r_{ij}$ are shifted with the constant factor $s$, but the asymptotic behaviour does not change, so we can reduce the analysis to the above sequences. Then 
		\begin{align*}
			&\frac{1}{n} \sum_{k,h=1}^n r_{11}(|k-h|)r_{22}(|k-h|)= \sum_{|\tau|\leq n} \Big(1-\frac{|\tau|}n\Big)r_{11}(\tau)r_{22}(\tau)+\frac{1}{n}\\
			&=\sum_{\tau=1}^{\infty} \Big(1-\frac{|\tau|}n\Big)r_{11}(\tau)r_{22}(\tau)\1_{|\tau|\leq n}+\frac1n=\frac 2n \sum_{\tau=1}^{\infty} \Big(1-\frac{\tau}n\Big)r_{11}(\tau)r_{22}(\tau)\1_{1\leq\tau\leq n}+\frac1n 
		\end{align*}
		By Theorem \ref{Decay1} we have that, when $\tau\to \infty$, $r_{ii}(\tau)=O(\tau^{2H_i-2})$, then $r_{11}(\tau)r_{22}(\tau)=O(\tau^{2H-4})$, and it is summable when $H<\frac 32$. Then, using dominated convergence, 
		$$
		\lim_{n \to 
			+\infty}2\sum_{\tau=1}^{\infty} \Big(1-\frac{\tau}n\Big)r_{11}(\tau)r_{22}(\tau)\1_{\tau<n}+ r_{11}(0)r_{22}(0)=2\sum_{\tau=1}^{\infty} r_{11}(\tau)r_{22}(\tau)+r_{11}(0)r_{22}(0. 
		$$
		The same argument can be used for the second sequence, recalling that $r_{12}(\tau)=O(\tau^{H-2})$, $r_{21}(\tau)=O(\tau^{H-2})$, then $r_{12}(\tau) r_{21}(\tau)=O(\tau^{2H-4})$. 
		It follows that, when $H<3/2$, $\Var(S_n)=O(1/n)$. 
		Moreover, we easily see that 
		\begin{align*}
			&\lim_{n\to +\infty}	\Var(\sqrt{n} S_n)=\Var(a_1Y_0^1Y_0^2+a_2Y_{s}^1Y_0^2+ a_3 Y_0^1 Y_{s}^2)\\
			&+2\sum_{k=1}^{+\infty} \Cov(a_1Y_0^1Y_0^2+a_2Y_{s}^1Y_0^2+ a_3 Y_0^1 Y_{s}^2,a_1Y_k^1Y_k^2+a_2Y_{k+s}^1Y_k^2+ a_3 Y_k^1 Y_{k+s}^2).\\
		\end{align*}
		
		\noindent When $H= 3/2$, there exists a constant $C>0$ and an integer $N>0$ such that
		\begin{align*}
			&\frac{1}{n} \sum_{|\tau|=0}^n r_{11}(\tau)r_{22}(\tau)=O\Big(\frac1{n}\Big)+\frac{C}{n} \sum_{|\tau|\geq N}^n \frac{1}{\tau}+ \frac{C_2}{n}\sum_{|\tau|\geq N}^n O\Big(\frac1{\tau^{3}}\Big)=O\Big(\frac{\log n}n \Big).
		\end{align*}
		The same holds for $\frac{1}{n} \sum_{|\tau|=0}^n r_{12}(\tau)r_{21}(\tau)$.
		When $H>3/2$, we have
		\begin{align*}
			&\frac{1}{n} \sum_{|\tau|=0}^n r_{11}(\tau)r_{22}(\tau)=O\Big(\frac1{n}\Big)+\frac{C}{n} \sum_{|\tau|\geq N}^n \frac{1}{\tau^{4-2H}}+ \frac{C_2}{n}\sum_{|\tau|\geq N}^n O\Big(\frac1{\tau^{6-2H}}\Big)=O\Big(\frac1{n^{4-2H}} \Big).
		\end{align*}
		Again, the same holds for $\frac{1}{n} \sum_{|\tau|=0}^n r_{12}(\tau)r_{21}(\tau)$.

\end{proof}
\begin{proof}[Proof of Theorem \ref{consistency}]
	Since $\hat \rho_n$ and $\hat\eta_{12,n}$ are asymptotically unbiased and, by Theorem \ref{VAR_SN}, the sequences of their variances tend to $0$, then $\hat \rho_n$ and $\hat\eta_{12,n}$ converge in $L^2(\P)$ and then in probability to $\rho$ and $\eta_{12}$, respectively.
\end{proof}
To prove Theorem \ref{S_n_conv} we use the Malliavin-Stein's method (see \cite{NP12}) and the fourth moment theorem (Theorem \ref{FOURTH}). From now on, for the details of certain computations we refer the reader to Chapter 5, Sections 6 and 7 in \cite{G24}. Theorem \ref{MovAverageFBM} implies that we can write $Y_k^i=\int_{\R} \<  f_k^i(s), W(ds)\>_{\R^2}=I_1( f_k^i)$, where $ f_{k}^i\in L^2(\R; \R^2)$, $i=1,2$, $k\in \N$ and $W$ is a bidimensional Gaussian noise. Here $I_1$ denotes the Wiener-It\^o integral of order $1$ with respect to $W$. Then, by the product formula in \eqref{product_formula}, we have
\begin{align*}
	\overline S_n&=\frac 1n \sum_{k=1}^n\Big(  a_1 Y_k^1Y_k^2+a_2 Y_{k+s}^1 Y_k^2+a_3 Y_k^1 Y_{k+s}^2 -\E[ a_1 Y_k^1Y_k^2+a_2 Y_{k+s}^1 Y_k^2+a_3 Y_k^1 Y_{k+s}^2]  \Big)\\
	&=\frac 1{n} \sum_{k=1}^n\Big(  a_1 I_1(f_k^1)I_1(f_k^2)+a_2 I_1(f_{k+s}^1) I_1(f_k^2)+a_3 I_1(f_k^1)I_1(f_{k+s}^2)\\
	&-\E[a_1 I_1(f_k^1)I_1(f_k^2)+a_2 I_1(f_{k+s}^1) I_1(f_k^2)+a_3 I_1(f_k^1)I_1(f_{k+s}^2)]\Big)\\
	&=\frac 1{n} \sum_{k=1}^n I_2(a_1 f_k^1\tilde \otimes f_k^2+a_2f_{k+s}^1\tilde \otimes f_k^2+a_3f_k^1\tilde \otimes f_{k+s}^2).
\end{align*} 
Theorem \ref{FOURTH} requires that $\Var(\sqrt{n}\overline S_n)\to \sigma^2$ where $\sigma^2$ is a strictly positive and finite constant (we have proved that in Theorem \ref{VAR_SN}) and $\kappa_4(\sqrt{n}\overline S_n)\to 0$, where $\kappa_4$ denotes the cumulant of order $4$, or equivalently $\|\theta_n\otimes_1\theta_n\|_{L^2(\R^2;\R^2)}\to 0$, where $\theta_n=n^{-\frac 12} \sum_{k=1}^n \big(a_1 f_k^1\tilde \otimes f_k^2+a_2f_{k+s}^1\tilde \otimes f_k^2+a_3f_k^1\tilde \otimes f_{k+s}^2\big)
$ is the kernel of $S_n$ as a double Wiener-It\^o integral and $\theta_n \otimes_1 \theta_n$ is the contraction of order $1$ of $\theta_n$ with itself (see \eqref{contraction}). By linearity, $\theta_n \otimes_1 \theta_n =n^{-1}\sum_{k,h=1}^n z_k^s \otimes_1 z_h^s$ where $z_k^s= f_k^1\tilde \otimes f_k^2+a_2f_{k+s}^1\tilde \otimes f_k^2+a_3f_k^1\tilde \otimes f_{k+s}^2$, and so we have to compute contractions of the form $(f_{k}^1\tilde\otimes f_{k}^2)  \otimes_1 (f_{h}^1\tilde\otimes f_{h}^2)$ or analogous expressions, where we just change the functions $f_k^i$ with $f_{k+s}^i$ in a suitable manner. Then, denoting by $(e_r)_{r\in\N}$ an orthonormal basis of $L^2(\R;\R^2)$, we have
\begin{align*}
 (f_{k}^1\tilde\otimes f_{k}^2)  \otimes_1 (f_{h}^1\tilde\otimes f_{h}^2)& =\sum_{r_1, r_2, r_3=1}^{\infty} 
	\<f_{k}^1\tilde\otimes f_{k}^2, e_{r_1} \otimes e_{r_2}\> \<f_{h}^1\tilde\otimes f_{h}^2, e_{r_1} \otimes e_{r_3}\> e_{r_2}\otimes e_{r_3}\\
	&=\sum_{r_2, r_3=1}^{\infty} 
	\Big(\sum_{r_1=1}^{\infty}\<f_{k}^1\tilde\otimes f_{k}^2, e_{r_1} \otimes e_{r_2}\> \<f_{h}^1\tilde\otimes f_{h}^2, e_{r_1} \otimes e_{r_3}\> \Big)e_{r_2} \otimes e_{r_3}\\
	&=\sum_{r_2, r_3=1}^{\infty} 
	q(k,h, r_2, r_3)e_{r_2} \otimes e_{r_3},
\end{align*}
where 
	\begin{align*}
	q(k, h, r_2, r_3)&=\<f_{k}^2, e_{r_2}\>\<f_{h}^2, e_{r_3}\>\<f_{k}^1, f_{h}^1\>+\<f_{k}^2, e_{r_2}\>\<f_{h}^1, e_{r_3}\>\<f_{k}^1, f_{h}^2\>\\
	&+\<f_{k}^1, e_{r_2}\>\<f_{h}^2, e_{r_3}\>\<f_{k}^2, f_{h}^1\>+\<f_{k}^1, e_{r_2}\>\<f_{h}^1, e_{r_3}\>\<f_{k}^2, f_{h}^2\>.
\end{align*}
We recall that $\<f^i_k, f^j_h\>=r_{ij}(k-h)$, where $r_{ij}$ is the covariance (or cross-covariance) of our bivariate process. Then $\|\theta_n \otimes_1\theta_n\|_{L^2(\R;\R^2)}^2$ is a sum of expressions of the forms
\begin{align*}
	&\frac1{n^2}\sum_{k_1, \ldots, k_4=1}^n r_{11}(k_1-k_2)r_{22}(k_2-k_3)r_{11}(k_3-k_4)r_{22}(k_4-k_1),\\
	&\frac1{n^2}\sum_{k_1, \ldots, k_4=1}^n r_{12}(k_1-k_2)r_{12}(k_2-k_3)r_{12}(k_3-k_4)r_{12}(k_4-k_1),\\
	&\frac1{n^2}\sum_{k_1, \ldots, k_4=1}^n r_{12}(k_1-k_2)r_{12}(k_2-k_3)r_{11}(k_3-k_4)r_{22}(k_4-k_1),\\
	&\frac1{n^2}\sum_{k_1, \ldots, k_4=1}^n r_{12}(k_1-k_2)r_{11}(k_2-k_3)r_{21}(k_3-k_4)r_{22}(k_4-k_1),
\end{align*}
or similar to above expressions, with suitable shift of $k_{i+1}-k_i$ determined by the constant $s$. Since $s$ is fixed, the asymptotic behaviour of the sum and the convergence analysis remain unaffected for the second type of terms. In the following lemma we prove that the above expressions tend to $0$.  
	\begin{lemma}\label{gammaAn}
	Let us suppose that $H=H_1+H_2<3/2$. Let us denote by $\gamma_{ij}$, with $i,j\in\{1,2\}$, four real functions such that
	\begin{itemize}
		\item[1)] $|\gamma_{ij}(k)|\leq \gamma_{ij}(0)$;
		\item[2)]  there exists $\ell_{ij}>0$, $i,j\in\{1,2\}$ such that
		$$\lim_{k\to\infty} \frac{|\gamma_{ij}(k)|}{k^p}=
		\begin{cases} 
			\ell_{ii}  \quad{\text{if }i=j,p=2H_i-2}\\
			\ell_{ij}  \quad{\text{if }i\neq j, p=H-2}.
		\end{cases}
		$$
		
	\end{itemize}
	Let $F(k_1,k_2,k_3,k_4)$ be one of the following functions: 
	\begin{align*}
		&\gamma_{11}(k_1-k_2)\gamma_{22}(k_2-k_3)\gamma_{11}(k_3-k_4)\gamma_{22}(k_4-k_1),\\
		&\gamma_{12}(k_1-k_2)\gamma_{12}(k_2-k_3)\gamma_{12}(k_3-k_4)\gamma_{12}(k_4-k_1),\\
		&\gamma_{12}(k_1-k_2)\gamma_{12}(k_2-k_3)\gamma_{11}(k_3-k_4)\gamma_{22}(k_4-k_1),\\
		&\gamma_{12}(k_1-k_2)\gamma_{11}(k_2-k_3)\gamma_{21}(k_3-k_4)\gamma_{22}(k_4-k_1).
	\end{align*}
	Then 
	$$
	A_n=\frac 1{n^2}\sum_{k_1,\ldots, k_4=0}^{n-1} F(k_1,k_2,k_3, k_4) \overset{n\to \infty}{\to} 0. 
	$$

\end{lemma}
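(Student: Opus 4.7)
The plan is to reduce the four-fold sum to a three-fold one via the change of variables $\tau_1 = k_1 - k_2$, $\tau_2 = k_2 - k_3$, $\tau_3 = k_3 - k_4$, which makes $k_4 - k_1 = -(\tau_1+\tau_2+\tau_3)$. For each fixed triple $(\tau_1,\tau_2,\tau_3)$, the number of $(k_1,\ldots,k_4) \in \{0,\ldots,n-1\}^4$ compatible with these differences is at most $n$. Hence
\[
|A_n| \leq \frac{1}{n} \sum_{|\tau_1|,|\tau_2|,|\tau_3| \leq n-1} \prod_{l=1}^{4} |\gamma^{(l)}(\tau_l)|,
\]
where $\tau_4 := -(\tau_1+\tau_2+\tau_3)$ and $(\gamma^{(1)},\ldots,\gamma^{(4)})$ denotes the relevant quadruple among $\{\gamma_{11},\gamma_{12},\gamma_{21},\gamma_{22}\}$ determined by the given form of $F$.

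Next I would invoke the hypotheses to obtain, for each $l$, a uniform bound $|\gamma^{(l)}(k)| \leq C(1+|k|)^{q_l}$ with $q_l \in \{2H_1-2,\,2H_2-2,\,H-2\}$; a direct inspection of the four admissible forms of $F$ listed in the statement shows that in every case $q_1+q_2+q_3+q_4 = 4H-8$. The goal is then to prove the key estimate
\[
S_n := \sum_{|\tau_1|,|\tau_2|,|\tau_3| \leq n-1} \prod_{l=1}^{4} (1+|\tau_l|)^{q_l} = O\!\left(n^{4H-5}\right),
\]
up to possible logarithmic corrections at threshold configurations. This is achieved by comparison with the continuous integral over $[-n,n]^3$ and the scaling $\tau_l = n y_l$, which extracts the factor $n^{3+\sum q_l} = n^{4H-5}$; the residual integral over $[-1,1]^3$, namely $\int \prod_{l=1}^3 |y_l|^{q_l} |y_1+y_2+y_3|^{q_4}\,dy$, is finite due to $H<3/2$ (which controls the singularity along the hyperplane $y_1+y_2+y_3=0$) together with the fact that any exponent $q_l<-1$ corresponds to an absolutely summable factor that is summed out first. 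Combining this estimate with the step-1 bound yields $|A_n| = O(n^{4H-6}) \to 0$, since $H<3/2$.

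The principal technical obstacle is the casework required to handle the various regimes of the individual exponents. When all $q_l > -1$ (which occurs when the Hurst parameters are sufficiently large), the scaling argument applies directly. When some $q_l < -1$ (for instance when some $H_i < 1/2$, giving $2H_i-2<-1$, or when $H<1$ giving $H-2<-1$ for cross terms), the associated sum $\sum_{\tau_l}(1+|\tau_l|)^{q_l}$ is absolutely convergent, so one first sums out that variable—reducing to a lower-dimensional sum with strictly fewer factors—and then applies scaling to the residual. Threshold exponents $q_l=-1$, which can only arise in degenerate configurations (namely $H=1$ or $H_i=1/2$, both excluded by hypothesis), would introduce logarithmic corrections. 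After this case analysis one always recovers $S_n=O(n^{4H-5})$, and the conclusion $A_n\to 0$ follows.
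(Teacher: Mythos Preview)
Your reduction to the three--fold sum over $(\tau_1,\tau_2,\tau_3)$ with $\tau_4=-(\tau_1+\tau_2+\tau_3)$ is correct and indeed saves one factor of $n$, and the idea of distinguishing ``summable'' exponents ($q_l<-1$) from ``scalable'' ones ($q_l>-1$) is the right intuition. However, the scaling step contains a genuine gap.

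You claim that when all $q_l>-1$ the integral
\[
\int_{[-1,1]^3}|y_1|^{q_1}|y_2|^{q_2}|y_3|^{q_3}|y_1+y_2+y_3|^{q_4}\,dy
\]
is finite. This is false in the range $1<H\le 5/4$. All four singular hyperplanes $\{y_l=0\}$ and $\{y_1+y_2+y_3=0\}$ meet at the origin, and a polar--coordinate scaling there shows the radial integral behaves like $\int_0 r^{\,q_1+q_2+q_3+q_4+2}\,dr$; convergence requires $\sum_l q_l>-3$, i.e.\ $4H-8>-3$, i.e.\ $H>5/4$. Concretely, for $F=\gamma_{12}^{\,4}$ with $H\in(1,5/4]$ one has $q_l=H-2\in(-1,-3/4]$ for all $l$: no factor can be ``summed out'', yet the scaling integral diverges, so the step that produces the bound $S_n=O(n^{4H-5})$ breaks down. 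The same happens for $F=\gamma_{11}\gamma_{22}\gamma_{11}\gamma_{22}$ whenever $H_1,H_2\in(1/2,5/8]$.

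The paper handles exactly this difficulty by exploiting the convolution structure rather than a pointwise scaling bound: writing the four--fold sum as $n^{-1}\sum_k(|\gamma^{(1),n}|*|\gamma^{(2),n}|(k))(|\gamma^{(3),n}|*|\gamma^{(4),n}|(k))$ and applying Young's inequality $\|f*g\|_{\ell^s}\le\|f\|_{\ell^p}\|g\|_{\ell^q}$ with carefully chosen $p,q$ (depending on which of $H_1,H_2$ exceed $1/2$). This produces the needed $o(1)$ bound without ever facing the divergent origin singularity. Your argument can be repaired along these lines, but not by the pure scaling--plus--summability dichotomy you describe; some convolution inequality is essential in the intermediate regime.
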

 \noindent \begin{proof} The proof is technical and requires several computations. The idea is to split the analysis according to the functions $F$, and then to the values of $\max\{H_1, H_2\}$ and $\min\{H_1, H_2\}$. When $\max\{H_1, H_2\}<1/2$, we easily obtain the statement from the following observation: being $|\gamma_{ii}(k)|\leq \ell_{ii}$ and $\gamma{ii}(|k|)\sim k^{2H_i-2}$, there exists a constant $C_i>0$ such that $|\gamma_{ii}(k)|\leq C_i k^{2H_i-2}$, $i=1,2$. The same holds for $\gamma_{ij}$, $i,j=1,2, i\neq j$, where $|\gamma_{ij}(k)|\leq C_{ij} k^{H-2}$. Then,
	\begin{align*}
	|A_n|&\leq \frac{1}{n^2}\sum_{k_1,\ldots, k_4=0}^{n-1} |\gamma_{11}(k_1-k_2)\gamma_{22}(k_2-k_3)\gamma_{11}(k_3-k_4)\gamma_{22}(k_4-k_1)| \\
	&\leq \frac{C}{n^2} \int_{[0,n]^4} |x_1-x_2|^{2H_1-2}|x_2-x_3|^{2H_2-2}|x_3-x_4|^{2H_1-2}|x_4-x_1|^{2H_2-2}dx\\
	&=\frac C{n^{6-4H}} \int_{[0,1]^4} |y_1-y_2|^{2H_1-2}|y_2-y_3|^{2H_2-2}|y_3-y_4|^{2H_1-2}|y_4-y_1|^{2H_2-2}dy
\end{align*}
 	and the integral is finite. Then $|A_n|\leq n^{4H-6}\to 0$. The same bound holds for all functions $F$ in the statement of the theorem. 
 	
 	When $\max\{H_1, H_2\}>1/2$, we have to refine the bound. The idea is to write $|A_n|$ as a discrete convolution of the functions $\gamma_{ii}$ and $\gamma_{ij}$. For example, denoting $\gamma_{11}^n (k):=|\gamma_{11}(k)|\1_{|k|< n}$ and $\gamma_{22}^n (k):=|\gamma_{22}(k)|\1_{|k|< n}$, we have  
 	\begin{align*}
 		|A_n|&\leq \frac 1{n^2}\sum_{k_1,k_2, k_3, k_4=1}^{n} |\gamma_{11}(k_1-k_2)\gamma_{22}(k_2-k_3)\gamma_{11}(k_3-k_4)\gamma_{22}(k_4-k_1)| \\
 		&\leq \frac 1{n^2}\sum_{k_1, k_3=1 }^{n} \sum_{k_2, k_4 \in \Z}  |\gamma_{11}(k_1-k_2)\gamma_{22}(k_2-k_3)\gamma_{11}(k_3-k_4)\gamma_{22}(k_4-k_1)|\\
 &\leq \frac{1}{n^2} \sum_{k_1, k_3=1}^n \Big(|\gamma_{11}^n|*|\gamma_{22}^n|(k_1-k_3)\Big)^2\leq \frac{1}{n} \sum_{k=-n}^n \Big(|\gamma_{11}^n|*|\gamma_{22}^n|(k)\Big)^2.
 	\end{align*}
 	 (the example is written for the function $F(k_1, k_2, k_3, k_4)=\gamma_{11}(k_1-k_2)\gamma_{22}(k_2-k_3)\gamma_{11}(k_3-k_4)\gamma_{22}(k_4-k_1)$). Then we apply Young's inequality for convolution: for $p,q, s\geq 1$ such that $\frac 1p+\frac 1q=1+\frac 1s$, we have 
 	$$
 	\|f*g\|_{\ell^s(\Z)}\leq \|f\|_{\ell^p(\Z)}\|g\|_{\ell^q(\Z)}.
 	$$
 	The use of Young's inequality in the proof changes, taking $s=2$ and $p,q$ according to the values of $H_1, H_2$. In each case there exist values of $p,q>1$ such that $|A_n|\to 0$ (see Lemma 6.1.7 in \cite{G24} for details). 
 	
 \end{proof}

\begin{proof}[Proof of Theorem \ref{CLT_hat_rho_eta}]
Taking $a_1=a_1(s)$, $a_2=a_2(s)$ and $a_3=a_3(s)$ given in \eqref{coefficients}, in Theorem \ref{VAR_SN} we proved that $\lim_{n\to \infty}\Var(\sqrt{n}(\hat \rho_n-\rho))=\sigma_\rho^2\in(0,+\infty)$. We apply Lemma \ref{gammaAn}, taking $\gamma_{ii}=r_{ii}$ and $\gamma_{ij}=r_{ij}$ for $i,j=1,2$. The expression for $\|\theta_n \otimes_1 \theta_n\|_{L^2}$ when $\theta_n$ is the kernel of $\hat \rho_n$, combined with Lemma \ref{gammaAn} implies that $(5)$ in Theorem \ref{FOURTH} holds. Then $\sqrt{n}(\hat \rho_n- \rho)\to N_{\rho}$, where $N_{\rho}\sim \mathcal N(0,\sigma_\rho^2)$. The same holds for $\hat \eta_{12, n}$, when $a_1=b_1(s)$, $a_2=b_2(s)$ and $a_3=b_3(s)$. Moreover, from straightforward computations it follows that
\begin{align*}
	\Cov(N_\rho, N_\eta)=\lim_{n\to \infty} n\E[(\hat\rho_n- \rho)(\hat \eta_{12, n}-\eta_{12})]<+\infty,
\end{align*}
and, by Theorem \ref{jointconvergence}, we have that $\sqrt{n}(\hat\rho_n- \rho,\hat \eta_{12, n}-\eta_{12})\overset{d}{\to}(N_\rho, N_{\eta})$.

\end{proof}
When $H=\frac 32$, the variance of the error changes but, under a different normalization, we have a analogous result. 
\begin{proof}[Proof of Theorem \ref{CLT_hat_rho_eta32}]
Let us prove the statement for $\hat \rho_n$. Since $\sqrt{n/{\log n}}(\hat\rho_n-\rho)=I_2(\theta_n)$, where $$\theta_n=\frac{1}{	\sqrt{n\log n }}\sum_{k=1}^n ( a_1 f_{k}^1\tilde \otimes f_{k}^2+a_2f_{k+s}^1\tilde \otimes f_{k}^2+a_3f_{k}^1\tilde     \otimes f_{k+s}^2),$$ then
\begin{align*}
	\Var(I_2(\theta_n))=\frac{1}{n\log n}\sum_{k_1, k_2=1}^n & \Big\< a_1 f_{k_1}^1\tilde \otimes f_{k_1}^2+a_2f_{k_1+s}^1\tilde \otimes f_{k_1}^2+a_3f_{k_1}^1\tilde     \otimes f_{k_1+s}^2,\\&\quad \quad
	a_1 f_{k_2}^1\tilde \otimes f_{k_2}^2+a_2f_{k_2+s}^1\tilde \otimes f_{k_2}^2+a_3f_{k_2}^1\tilde     \otimes f_{k_2+s}^2 \Big\>. 
\end{align*}
We study $\frac{ a_1^2}{\log(n)n} \sum_{k_1, k_2=1}^n \<f_{k_1}^1\tilde \otimes f_{k_1}^2\>\<f_{k_2}^1\tilde \otimes f_{k_2}^2\>$. We have
\begin{align*}
	&\frac{ a_1^2}{\log(n)n} \sum_{k_1, k_2=1}^n \<f_{k_1}^1\tilde \otimes f_{k_1}^2\>\<f_{k_2}^1\tilde \otimes f_{k_2}^2\>=\frac{ a_1^2}{\log(n)} \sum_{k=0}^n \Big(1-\frac{k}{n}\Big)\Big(r_{11}(k)r_{22}(k)+r_{12}(k)r_{21}(k)\Big).
\end{align*}
By Theorem \ref{Decay1} and Theorem \ref{cov}, for $H=\frac 32$, we have that 
\begin{align*}
	&\lim_{k\to +\infty} \frac{r_{11}(k)r_{22}(k)+r_{12}(k)r_{21}(k)}{k^{-1}}=\frac{\nu_1^2\nu_2^2}{4\a_1^2\a_2^2}\big((\rho^2-\eta_{12}^2)\frac {9}{16}+4(2H_1-1)(2H_2-1)H_1H_2\big)=\ell.
\end{align*}
By the condition for $\rho$ and $\eta_{12}$ in \eqref{domain}, it follows that $\ell\neq 0$ (for details, see Proposition 6.1.11 in \cite{G24}). Then
\begin{align*}
	&\lim_{n\to\infty}\frac{a_1^2}{\log n} \sum_{k=0}^{\infty} \Big(1-\frac{k}{n}\Big)\Big(r_{11}(k)r_{22}(k)+r_{12}(k)r_{21}(k)\Big)=\lim_{n\to\infty}\frac{a_1^2\ell}{\log n} \sum_{k=0}^{\infty} \frac 1k=a_1^2\ell
\end{align*} 
and $\lim_{n\to\infty} \frac{n}{\log n}\Var(\hat \rho_n-\rho)=(a_1+a_2+a_3)^2\ell$. We have to prove that $\|\theta_n\otimes_1 \theta_n\|_{L^2(\R^2;\R^2)}\to 0$. The proof is easier than proof of Lemma \ref{gammaAn}. We just prove that $$
\frac{1}{n^2\log n}\sum_{k_1,\ldots,k_4=1}^n r_{11}(k_1-k_2)r_{22}(k_2-k_3)r_{11}(k_3-k_4)r_{22}(k_4-k_1)\to 0.
$$ 
In the same way we prove that the whole $\|\theta_n\otimes_1\theta_n\|\to 0$. We observe that
\begin{align*}
&\Big|\frac{1}{n^2\log n}\sum_{k_1,\ldots,k_4=1}^n r_{11}(k_1-k_2)r_{22}(k_2-k_3)r_{11}(k_3-k_4)r_{22}(k_4-k_1)\Big|\\
&\leq C\frac{1}{(\log{n})^2}\int_{[0,1]^4} |y_1-y_2|^{2H_1-2}|y_2-y_3|^{2H_2-2}|y_3-y_4|^{2H_1-2}|y_4-y_1|^{2H_2-2}dy_1dy_2dy_3dy_4 \to 0,
\end{align*} 
for the other summands that form $\|\theta_n\otimes_1\theta_n\|$ the proof is analogous. 
The integral is finite because $2H_1-2<-1$ and $2H_2-2<-1$. The assumptions of Theorem \ref{FOURTH} hold then $\sqrt{\frac n{\log n}}(\hat\rho_n-\rho)\overset{d}{\to} N_\rho$, $\sqrt{\frac n{\log n}}(\hat\eta_{12,n}-\eta_{12})\to 
\overset{d}{\to} N_\eta$, where $N_\rho\sim \mathcal N(0,(a_1(s)+a_2(s)+a_3(s))^2\ell)$, $N_{\eta}\sim\mathcal N(0, (b_1(s)+b_2(s)+b_3(s))^2\ell)$ and, since $\lim_{n\to\infty}\frac n{\log n}\Cov(\hat \rho_n-\rho,\hat \eta_{12, n}-\eta_{12})$ exists, by Theorem \ref{jointconvergence} we can conclude that $\sqrt{\frac n{\log n}}(\hat \rho_n-\rho,\hat \eta_{12, n}-\eta_{12})\overset{d}{\to} (N_\rho, N_\eta)$. 

\end{proof}

We adapt  the approach in \cite[\S 7.3]{N13} to prove Theorem \ref{cumulantNC}. 
\begin{proof}[Proof of Theorem \ref{cumulantNC}]

Let us recall that $\tilde S_n=I_2(\theta_n)$ where 
$
\theta_n=n^{1-H}\sum_{k=1}^n ( a_1 f_{k}^1\tilde \otimes f_{k}^2+a_2f_{k+s}^1\tilde \otimes f_{k}^2+a_3f_{k}^1\tilde     \otimes f_{k+s}^2).
$
We can compute the cumulant of order $p$, $p\geq 2$, by exploiting the structure of $\tilde S_n$. Being $\tilde S_n=I_2(\theta_n)$ a double Wiener-It\^o integral and recalling that $\kappa_p(F)=(-1)^p \frac{\partial}{\partial^p t^p}|_{t=0} \log(\E[e^{itF}])$ it follows that 
$$
\kappa_p(\tilde S_n)= 2^{p-1}(p-1)! \frac{1}{n^{p(H-1)}} \sum_{k_1,\ldots, k_p=1}^n \sum_{h_1,\ldots, h_p=1}^{\infty}\prod_{i=1}^p \<\theta_n, e_{h_i}\otimes e_{h_{i+1}}\> 
$$ 
where $(e_h)_{h\in\N}$ is an orthonormal basis of $L^2(\R)$ (for the complete computations, see Theorem 6.1.14 in \cite{G24}). We study $\lim_{n\to\infty} \sum_{k_1,\ldots, k_p=1}^n \sum_{h_1,\ldots, h_p=1}^{\infty}\prod_{i=1}^p \<\theta_n, e_{h_i}\otimes e_{h_{i+1}}\>$. Substituting the expression of $\theta_n$, we obtain that $\kappa_p(\tilde S_n)$ is a finite sum of \begin{align}\label{genC}
	&\frac{2^{p-1} (p-1)!C(a_1,a_2,a_3)}{n^{p(H-1)} }\sum_{h_1, \ldots, h_p=1}^{\infty}  \sum_{k_1,\ldots, k_p=1}^n  \prod_{i=1}^p \<f_{ k_i^1}^{r_i^1}\otimes f_{ k_i^2}^{r_i^2}, e_{h_i}\otimes e_{h_i+1}\>\\
	&=\frac{2^{p-1} (p-1)!C(a_1,a_2,a_3)}{n^{p(H-1)} } \sum_{k_1,\ldots, k_p=1}^n \<f_{k_1^2}^{r_1^2}, f_{k_2^1}^{r_2^1}\>\<f_{k_2^2}^{r_2^2},f_{k_3^1}^{r_3^1}\>\cdots \<f_{k_{p-1}^2}^{r_{p-1}^2}, f_{k_p^1}^{r_{p}^1}\>\<f_{k_p^2}^{r_p^2}, f_{k_1^1}^{r_1^1}\>.
\end{align}
Here $C(a_1,a_2,a_3)=a_1^{p_1}a_2^{p_2}a_3^{p_3}$ with $p_1+p_2+p_3=p$ while for all $i=1,\ldots, p $ we have $(r_i^1, r_i^2)\in \{(1,2), (2,1)\}$ and  $(k_i^1, k_i^2)\in \{(k_i, k_i), (k_i+s, k_i), (k_i, k_i+s)\}$. There is a restriction on the couples $(r_i^2, r_{i+1}^1)$. We notice that 
\begin{itemize}
	\item if $(r_i^2, r_{i+1}^1)=(1,1)$, then $(r_{i+1}^2, r_{i+2}^1)\in\{(2,2), (2,1)\}$; 
	\item if $(r_i^2, r_{i+1}^1)=(2,2)$, then $(r_{i+1}^2, r_{i+2}^1)\in\{(1,1), (1,2)\}$;
	\item if $(r_i^2, r_{i+1}^1)=(1,2)$, then $(r_{i+1}^2, r_{i+2}^1)\in\{(1,2), (1,1)\}$;
	\item if $(r_i^2, r_{i+1}^1)=(2,1)$, then $(r_{i+1}^2, r_{i+2}^1)\in\{(2,1), (2,2)\}$;
	\item if $p$ is odd, there exists at least one couple $(r_i^2, r_{i+1}^2)\in\{(1,2), (2,1)\}$ and the number of couples in $\{(1,2), (2,1)\}$ is odd;
	\item if there exists a couple equal to $(1,1)$, then there exists a couple equal to $(2,2)$, and if the number of couples $(1,1)$ is $m$, then the number of couples $(2,2)$ is $m$.
\end{itemize} 
We split $\kappa_p(\tilde S_n)$ in two different sums. The first one is 
$$
 A_n= \frac1{n^{p(H-1)}}\sum_{h_1, \ldots, h_p=1}^{\infty}  \sum_{\underset{\exists i:|k_{i+1}-k_i|\leq s+2}{k_1,\ldots, k_p=1}}^n   \prod_{i=1}^p \<f_{ k_i^1}^{r_i^1}\otimes f_{ k_i^2}^{r_i^2}, e_{h_i}\otimes e_{h_i+1}\>. 
$$
Since 
			\begin{align*}
	& |A_n|\leq \frac {1}{n^{(H-1)p}}\Big|\sum_{\underset{|k_1-k_p|\leq s+2}{k_1,\ldots, k_p=1}}^n \<f_{k_1^1}^{i_{1}^1}, f_{k_p^2}^{i_{p}^2}\>\<f_{k_1^2}^{i_{1}^2}, f_{k_2^1}^{i_2^1}\>\cdots \<f_{k_{p-1}^2}^{i_{p-1}^2}, f_{k_p^1}^{i_p^1}\>\Big|\\
	&=\frac {1}{n^{(H-1)p}}\Big|\sum_{\underset{|k_1-k_p|\leq s+2}{k_1, k_p=1}}^n \<f_{k_1^1}^{i_{1}^1}, f_{k_p^2}^{i_{p}^2}\>
	\sum_{h_1, \ldots, h_{p-1}=1}^{+\infty} \<f_{k_1^2}^{i_{1}^2}, e_{h_1}\> \<f_{k_2^1}^{i_2^1}, e_{h_1}\>\cdots \<f_{k_{p-1}^2}^{i_{p-1}^2}, e_{h_{p-1}}\>\<f_{k_p^1}^{i_p^1}, e_{h_{p-1}}\>\Big|\\
	&=\frac {1}{n^{(H-1)p}}\Big|\sum_{\underset{|k_1-k_p|\leq s+2}{k_1,\ldots, k_p=1}}^n \<f_{k_1^1}^{i_{1}^1}, f_{k_p^2}^{i_{p}^2}\> \sum_{h_1, \ldots, h_{p-1}=1}^{+\infty} \<f_{k_1^2}^{i_{1}^2}, e_{h_1}\> \<f_{k_2^1}^{i_2^1}\otimes f_{k_2^2}^{i_2^2}, e_{h_1}\otimes e_{h_2}\>\cdots\\
	&\cdots\<f_{k_{p-1}^1}^{i_{p-1}^1}\otimes f_{k_{p-1}^2}^{i_{p-1}^2}, e_{h_{p-2}}\otimes e_{h_{p-1}}\> \<f_{k_p^1}^{i_p^1}, e_{h_{p-1}}\>\Big|\\
	&=\frac {1}{n^{(H-1)2}}\Big|\sum_{\underset{|k_1-k_p|\leq s+2}{k_1, k_p=1}}^n \<f_{k_1^1}^{i_{1}^1}, f_{k_p^2}^{i_{p}^2}\> \Big\<\cdots\Big( f_{k_1^2}^{i_1^2}\otimes_1\Big(\frac{1}{n^{H-1}}\sum_{k_2=}^n f_{k_2^1}^{i_2^1}\otimes f_{k_2^2}^{i_2^2}\Big)\otimes_1\cdots\\
	&\cdots\otimes_1\Big(\frac1{n^{H-1}}\sum_{k_{p-1}}^n f_{k_{p-1}^1}^{i_{p-1}^1}\otimes f_{k_{p-1}^2}^{i_{p-1}^2}\Big)\Big), f_{k_p^1}^{i_p^1}\Big\> \\
	&\leq \frac {1}{n^{(H-1)2}}\sum_{\underset{|k_1-k_p|\leq s+2}{k_1, k_p=1}}^n   \|f_{k_1^1}^{i_{1}^2}\|\| f_{k_p^2}^{i_{p}^1}\| \Big\|\frac{1}{n^{H-1}}\sum_{k_2=1}^n f_{k_2^1}^{i_2^1}\otimes f_{k_2^2}^{i_2^2}\Big\|\cdots\Big\|\frac1{n^{H-1}}\sum_{k_{p-1}}^n f_{k_{p-1}^1}^{i_{p-1}^1}\otimes f_{k_{p-1}^2}^{i_{p-1}^2}\Big\|\\
	&\leq \frac{C}{n^{(H-1)2}L^p(n)} \sum_{\underset{|k_1-k_p|\leq s+2}{k_1, k_p=1}}^n  1\leq \frac{C_2}{n^{2H-3}}
\end{align*}
when $H>\frac 32$, then $A_n \to 0$. Then we study 
$$
B_n=\lim_{n\to+\infty} \frac{1}{n^{p(H-1)}}\sum_{k_1,\ldots, k_p=1, \forall i |k_{i+1}-k_i|\geq s+3}^n \prod_{i=1}^{p} \<f_{k_{i}^2}^{r_i^2}, f_{k_{i+1}^1}^{r_{i+1}^1}\>.
$$
We recall that $\<f_{k_{i}^2}^{j}, f_{k_{i+1}^1}^{j}\>=r_{jj}(k_{i+1}^1-k_i^2)$, $j=1,2$ and $\<f_{k_{i}^2}^{1}, f_{k_{i+1}^1}^{2}\>=r_{12}(k_{i}^2-k_{i+1}^1)$ and also that $|r_{jj}(k)|\leq |r_{jj}(0)|, |r_{12}(k)|\leq |r_{12}(0)|$ and, by Theorem \ref{cov} and Theorem \ref{Decay1}, there exist $\lim_{k\to \infty} k^{2-2H_j}r_{jj}(k)=\ell_{jj}$, $\lim_{k\to \infty} k^{2-H}r_{12}(k)=\ell_{12}$, then we can write 
\begin{align*}
&r_{11}(k)=k^{2H_1-2} L_{11}(k)\quad{r_{22}(k)=k^{2H_2-2}L_{22}(k)}	\\
&r_{12}(k)=k^{H-2} L_{12}(k)\quad{r_{21}(k)=k^{H-2}L_{21}(k)}.
\end{align*}   
Then $
B_n=\frac{1}{n^p} \sum_{\underset{\forall i |k_{i+1}-k_i|\geq s+3}{k_1,\ldots, k_p=1}}^n \prod_{i=1}^{p}\Big(\frac{|k_{i+1}^2-k_i^1|}{n}\Big)^{-\b_i} L^i(k_{i+1}^2-k_i^1)$
where $\b_i\in\{2-2H_1, 2-2H_2, 2-H\}$ for all $i$ and $\sum_{i=1}^n \beta_i =p(2-H)$. Then
\begin{align*}
B_n&=\int_{\R_+^p}\sum_{\underset{\forall i |k_{i+1}-k_i|\geq s+3}{k_1,\ldots, k_p=1}}^n  \prod_{i=1}^{p} \Big(\frac{|k_{i+1}^2-k_i^1|}{n}\Big)^{-\b_i} L^i(k_{i+1}^2-k_i^1)\1_{[\frac{k_1-1}n, \frac{k_1}n)}(x_1) \cdots  \1_{[\frac{k_p-1}n, \frac{k_p}n)}(x_p) dx\\
&=\int_{\R_+^p} \lambda_n(x_1,\ldots, x_p)dx_1\ldots dx_p,
\end{align*}
and
	$$
|\lambda_n(x_1,\ldots, x_p)| \leq C(s+2)^{p(2-H)} \1_{[0,1]^p}(x_1,\ldots, x_p) \prod_{i=1}^p |x_{i+1}-x_i|^{-\b_i}.
$$
It follows from the fact that the functions $L_{11}$, $L_{22}$, $L_{12}$ and $L_{21}$ are bounded. 
Then we apply Lebesgue's Theorem, noticing that
			\begin{align*}
	&\lim_{n\to +\infty} \lambda_n(x_1,\ldots, x_p)\\
	&= \sum_{\underset{\forall i |k_{i+1}-k_i|\geq s+3}{k_1,\ldots, k_p=1}}^n \prod_{i=1}^{p} \Big(\frac{|k_{i+1}^2-k_i^1|}{n}\Big)^{-\b_i} L^i(k_{i+1}^2-k_i^1)\1_{[\frac{k_1-1}n, \frac{k_1}n)}(x_1) \cdots  \1_{[\frac{k_p-1}n, \frac{k_p}n)}(x_p) \\
	&= \prod_{i=1}^p\Big ( \ell_{21}\1_{\underset{ x_{i+1}>x_i}{\b_i=2-H}}+\ell_{12}\1_{\underset{x_{i+1}\leq x_1}{\b_i=2-H} }+\ell_{11}\1_{\b_i=2-2H_1}+\ell_{22}\1_{\b_i=2-2H_2} \Big)|x_{i+1}-x_i|^{-\b_i}\1_{[0,1]}(x_i) .
\end{align*}
Then, by adding together all the sums that form $\kappa_p(\tilde S_n)$, we obtain the first part of the statement. 
The analysis of the variance is approached recalling that $\kappa_2(\tilde S_n)=\Var(\tilde S_n)$. In this case we can compute explicitly the integrals appearing in the limit, obtaining the values in the statement. By the conditions in \eqref{domain}, it follows that, for $H>\frac 32$, that limit is not $0$.  
\end{proof}
\begin{proof}[Proof of Theorem \ref{th:nogaus}]
	As a consequence of Theorem \ref{cumulantNC}, for suitable choices of $a_1, a_2, a_3$, for all $p\geq 2$,
	 \begin{align*}
	\lim_{n\to \infty}\kappa_p( n^{2-H}(\hat\rho_n-\rho))=2^{p-1}(a_1(s)+&a_2(s)+a_3(s))^p\sum_{i_1, \ldots, i_p=1}^2\\& \sum_{\underset{i_2\neq j_1,\ldots, i_p\neq j_{p-1}, i_1\neq j_p}{j_1,\ldots, j_p=1}}^2\int_{[0,1]^p}  z_{i_1 j_1}(x_1, x_2) z_{i_2 j_2}(x_2, x_3)\ldots z_{i_p j_p}(x_p, x_1) dx
\end{align*}
and
\begin{align*}
	\lim_{n\to \infty}\kappa_p( n^{2-H}(\hat\eta_{12,n}-\eta_{12}))=2^{p-1}(b_1(s)+&b_2(s)+b_3(s))^p\sum_{i_1, \ldots, i_p=1}^2 \\& \sum_{\underset{i_2\neq j_1,\ldots, i_p\neq j_{p-1}, i_1\neq j_p}{j_1,\ldots, j_p=1}}^2\int_{[0,1]^p}  z_{i_1 j_1}(x_1, x_2) z_{i_2 j_2}(x_2, x_3)\ldots z_{i_p j_p}(x_p, x_1) dx
\end{align*}
where 
$$
z_{11}(x,y)=  \frac{2\nu_1^2H_1(2H_1-1)}{\a_1^2} |x-y|^{2H_1-2}
$$
$$
z_{22}(x,y)= \frac{2\nu_2^2H_2(2H_2-1)}{\a_2^2} |x-y|^{2H_2-2},
$$
$$
z_{12}(x, y)=  \frac{2\nu_1\nu_2H(H-1) }{\a_1\a_2} \times \begin{cases}
(\rho-\eta_{12})(x-y)^{H-2} \quad{x>y}\\
(\rho+\eta_{12})  (y-x)^{H-2} \quad{x\leq y}
\end{cases}
$$
$$
z_{21}(x, y)=  \frac{2\nu_1\nu_2H(H-1) }{\a_1\a_2} \times \begin{cases}
(\rho+\eta_{12})(x-y)^{H-2} \quad{x>y}\\
(\rho-\eta_{12})  (y-x)^{H-2} \quad{x\leq y}.
\end{cases}
	$$
	Moreover, for all $\varepsilon >0$,  
	\begin{align*}
	&\P\Big(n^{2-H}|\hat\rho_n-\rho|>\sqrt{\frac{\sup_n \kappa_2(n^{2-H}(\hat\rho_n-\rho))}{\varepsilon}}\Big)\leq \frac{\Var(n^{2-H}(\hat\rho_n-\rho))\varepsilon}{\sup_n \kappa_2(n^{2-H}(\hat\rho_n-\rho))}\leq \varepsilon. 
	\end{align*}
	The same holds for $\hat \eta_{12,n}$. Then the sequence is tight, and there exists a subsequence that converges in distribution. The limit is given by $Z_\rho=I_2(f_\rho)+N_\rho$, where $I_2(f_\rho)$ is a double Wiener-It\^o integral and $N_\rho$ is an independent (of $I_2(f_\rho)$) Gaussian random variable. This fact is proved in \cite{PN12}. Being $Z_\rho$ determined by its cumulants, we can apply Proposition 5.2.2 in \cite{NP12} (Method of moments and cumulants) that implies the convergence of the whole sequence to $Z_\rho$. It is not easy to establish that 
		$\lim_{n\to \infty}\kappa_4(n^{2-H}(\hat\rho_n-\rho))\neq 0$; if it is equal to $0$, the Fourth Moment theorem holds (Theorem \ref{FOURTH}), then the limit is Gaussian (i.e. $f_\rho=0$).  
	
\end{proof}

Let us now collect the proofs of the results related to the high frequency estimators. For some details of the proofs, we refer to Section 7 in \cite{G24}.
\begin{proof}[Proof of Lemma \ref{short-time-inverse}]
This is a direct consequence of Lemma \ref{shorttime}, inverting the asymptotic relations for $\Cov(Y_t^1, Y_{t+s}^2)$ and $\Cov(Y_{t+s}^1. Y_t^2)$, $t\in \R$, $s\to 0$. 
\end{proof}
\begin{proof}[Proof of Proposition \ref{unbiasEst}]
	We have that
	\begin{align*}	
	\E[\tilde\rho_n]&=\frac{1}{\nu_1\nu_2n\Delta_n^H}\sum_{k=0}^{n-1} \E\Big[\Big(Y_{(k+1)\Delta_n}^1-Y_{k\Delta_n}^1\Big)\Big(Y_{(k+1)\Delta_n}^2-Y_{k\Delta_n}^2\Big)\Big]\\
	&=\frac{1}{\nu_1\nu_2n\Delta_n^H}\sum_{k=0}^{n-1} \Big(2\Cov(Y_0^1,Y_0^2)-\Cov(Y_{\Delta_n}^1, Y_0^2)-\Cov(Y_0^1,Y_{\Delta_n}^2)\Big)\\
	&=\frac{2\Cov(Y_0^1,Y_0^2)-\Cov(Y_{\Delta_n}^1, Y_0^2)-\Cov(Y_0^1,Y_{\Delta_n}^2)}{\nu_1\nu_2\Delta_n^H}\\
	&=\rho+O(\Delta_n^{\min(1,2-H)})\overset{n\to \infty}{\to} \rho.
\end{align*} 
The same holds for $\tilde \eta_{12, n}$.
\end{proof}
Now we focus on the results related to $\tilde \rho_n$. We recall that \begin{equation}\label{decom}
Y_{(k+1)\Delta_n}^i=Y_{k\Delta_n}^ie^{-\a_i\Delta_n}+\xi_{(k+1)\Delta_n}^i,
\end{equation}
where $\xi_{k\Delta_n}^i=\nu_i\int_{(k-1)\Delta_n}^{k\Delta_n} e^{-k\Delta_n \a_i} e^{\a_i u} dB_u^{H_i}$. Moreover 
$$
\xi_{k\Delta_n}^i=\nu_i(B_{k\Delta_n}^{H_i}-B_{(k-1)\Delta_n}^{H_i})+R_{k\Delta_n}^{i},
$$
where $\Var(R_{k\Delta_n}^i)=C(H_i)\Delta_n^{2H_i+2}+o(\Delta_n^{2H_i+2})$ and $o(\Delta_n^{2H_i+1})$ does not depend on $k$. Using \eqref{decom}, we can write
\begin{align}\label{Repr_rho}
	\tilde \rho_n-\rho&=\frac{1}{\nu_1\nu_2n\Delta_n^H}\sum_{k=1}^n (Y_{(k+1)\Delta_n}^1-Y_{k\Delta_n}^1)(Y_{(k+1)\Delta_n}^2-Y_{k\Delta_n}^2)-\rho\\
	&=\frac{(e^{-\a_1\Delta_n}-1)(e^{-\a_2\Delta_n}-1)}{\nu_1\nu_2n\Delta_n^H}\sum_{k=1}^{n-1}Y_{k\Delta_n}^1 Y_{k\Delta_n}^2+\frac{e^{-\a_1\Delta_n}-1}{\nu_1\nu_2n\Delta_n^H} \sum_{k=1}^{n-1}Y_{k\Delta_n}^1 \xi_{(k+1)\Delta_n}^2\notag\\
	&+\frac{e^{-\a_2\Delta_n}-1}{\nu_1\nu_2n\Delta_n^H} \sum_{k=1}^{n-1}Y_{k\Delta_n}^2 \xi_{(k+1)\Delta_n}^1+\frac{1}{\nu_1n\Delta_n^H}\sum_{k=1}^{n-1} R_{(k+1)\Delta_n}^1 (B_{(k+1)\Delta_n}^2-B_{k\Delta_n}^2)\notag\\
	&\frac{1}{\nu_2n\Delta_n^H}\sum_{k=1}^{n-1} R_{(k+1)\Delta_n}^2 (B_{(k+1)\Delta_n}^1-B_{k\Delta_n}^1)+\frac{1}{\nu_1\nu_2n\Delta_n^H}\sum_{k=1}^{n-1} R_{(k+1)\Delta_n}^1R_{(k+1)\Delta_n}^2\notag\\
	&+\frac{1}{n\Delta_n^H}\sum_{k=1}^{n-1}  (B_{(k+1)\Delta_n}^1-B_{k\Delta_n}^1)(B_{(k+1)\Delta_n}^2-B_{k\Delta_n}^2)-\rho\notag
\end{align}
\begin{proof}[Proof of Theorem \ref{Cons_rho}]
The consistency of $\tilde \rho_n$ follows from the representation given in \eqref{Repr_rho}. From straightforward computations, it follows that each sum in the representation besides \\
$\frac{1}{n\Delta_n^H}\sum_{k=1}^{n-1}  (B_{(k+1)\Delta_n}^1-B_{k\Delta_n}^1)(B_{(k+1)\Delta_n}^2-B_{k\Delta_n}^2)$ converges to $0$ in $L^2(\P)$ and then in probability when Assumptions \eqref{AssumptionDelta_a} hold. Moreover, by Theorem \ref{cov_nfBm},  
\begin{align*}
&\frac{1}{n\Delta_n^H}\sum_{k=1}^{n-1}  \E[(B_{(k+1)\Delta_n}^1-B_{k\Delta_n}^1)(B_{(k+1)\Delta_n}^2-B_{k\Delta_n}^2)]=\frac{1}{n}\sum_{k=1}^{n-1}  \E[(B_{k+1}^1-B_{k}^1)(B_{k+1}^2-B_{k}^2)]=\rho,
\end{align*}
and 
\begin{align*}
&\Var\Big(\frac{1}{n\Delta_n^H}\sum_{k=1}^{n-1}  \E[(B_{(k+1)\Delta_n}^1-B_{k\Delta_n}^1)(B_{(k+1)\Delta_n}^2-B_{k\Delta_n}^2)]\Big)\leq\frac{C_1}{n^2}\sum_{k, h=1}^{n-1} |k-h|^{2H-4}\leq \frac{C_2}{n} \sum_{k=1}^{n-1} \frac 1{k^{4-2H}}
\end{align*}
and the right-hand side tends to $0$ for $H\in (0,2)$. For $H\in(0,1)$ we prove the consistency of $\tilde \eta_{12, n}$ in a similar way. 
\end{proof}
\begin{proof}[Proof of Theorem \ref{CLT_tilde_rho}]
	By following the proof of Theorem \ref{Cons_rho}, we deduce that $\Var(\tilde\rho_n-\rho)=O(n^{-1})$. We consider $\sqrt{n}(\tilde\rho_n-\rho)$. Besides $\frac{1}{n\Delta_n^H}\sum_{k=1}^{n-1}  (B_{(k+1)\Delta_n}^1-B_{k\Delta_n}^1)(B_{(k+1)\Delta_n}^2-B_{k\Delta_n}^2)-\rho$, the terms in \eqref{Repr_rho} multiplied by $\sqrt{n}$ converge to $0$ in probability, then in distribution when Assumptions \eqref{AssumptionDelta_a} and Assumptions \eqref{AssumptionDelta_b} hold. Thanks to the self-similarity of the mfBm, we have that
\begin{align*}
	&\frac{1}{\sqrt{n}\Delta_n^H}\sum_{k=1}^{n-1}  \Big((B_{(k+1)\Delta_n}^1-B_{k\Delta_n}^1)(B_{(k+1)\Delta_n}^2-B_{k\Delta_n}^2)-\rho\Big)\sim \frac{1}{\sqrt{n}}\sum_{k=1}^{n-1}  \Big((B_{k+1}^1-B_{k}^1)(B_{k+1}^2-B_{k}^2)-\rho\Big).
\end{align*}
	It converges to a Gaussian random variable by the same arguments as Theorem \ref{CLT_hat_rho_eta}, replacing $Y_k^1$ with $B_{k+1}^1-B_k^1$ and $Y_k^2$ with $B_{k+1}^2-B_k^2$. Also in this case the sequence can be written as a double Wiener-It\^o integral, with respect to a different kernel. Since for $i, j=1,2$, $i\neq j$, when $|k-h|\to \infty$ we have 
	\begin{align*}
	&\E[(B_{k+1}^i-B_k^i)(B_{h+1}^i-B_h^i)]\sim 	\E[Y_k^i Y_h^i] \sim |k-h|^{2H_i-2}\\
	&\E[(B_{k+1}^i-B_k^i)(B_{h+1}^j-B_h^j)]\sim 	\E[Y_k^i Y_h^j] \sim |k-h|^{H-2},
	\end{align*}   
we can use the same arguments to conclude that 
\begin{align*}
	&\lim_{n\to \infty}\Var\Big(\frac{1}{\sqrt{n}\Delta_n^H}\sum_{k=1}^{n-1}  \Big((B_{(k+1)\Delta_n}^1-B_{k\Delta_n}^1)(B_{(k+1)\Delta_n}^2-B_{k\Delta_n}^2)-\rho\Big)\Big)\\
	&=\Var(B_1^{H_1} B_1^{H_2})+2\sum_{k=1}^{+\infty} \Cov\Big(B_1^{H_1} B_1^{H_2}, (B_{k+1}^{H_1}-B_k^{H_1})(B_{k+1}^{H_2}-B_k^{H_2})\Big)
\end{align*}
and 
$$
\lim_{n\to\infty}\kappa_4\Big(\frac{1}{\sqrt{n}\Delta_n^H}\sum_{k=1}^{n-1}  \Big((B_{(k+1)\Delta_n}^1-B_{k\Delta_n}^1)(B_{(k+1)\Delta_n}^2-B_{k\Delta_n}^2)-\rho\Big)=0.
$$
The statement follows from Theorem \ref{FOURTH}.
\end{proof}
\begin{proof}[Proof of Theorem \ref{NU_1}]
	We consider $\hat \rho_n$ in \eqref{estRHO} when $Y^2=Y^1$, i.e. $\a_1=\a_2, \HH=H_1=H_2$, $\nu_1=\nu_2, \eta_{12}=0$ and $\rho=1$. By Theorem \ref{consistency}, we have that $\hat\rho_n\to \rho=1$ in $L^2(\P)$ and then in probability. Then $
	\frac{\hat \nu_n^2}{\nu^2}=\hat \rho_n\to 1   
	$
	and so $\hat  \nu_n^2 \to \nu^2 $ in $L^2(\P)$ and then in probability. The second part of the statement follows from Theorem \ref{S_n_conv} (or equivalently from Theorem \ref{CLT_hat_rho_eta}), having that, for $\HH<\frac 34$,  
	\begin{align*}
		\sqrt{n}(\hat \nu_n^2 -\nu^2)=\nu^2\sqrt{n}(\hat \rho_n-1)\overset{d}{\to} \nu^2 \mathcal N(0,\hat \sigma^2).
	\end{align*}
When $\HH=\frac 34$, as a consequence of Theorem \ref{CLT_hat_rho_eta32}, we have that $\sqrt{\frac{n}{\log n }}(\hat \nu_n^2-\nu^2)\to \mathcal N(0,\sigma^2)$, where $\sigma^2=\lim_{n\to\infty}\frac{n}{\log n }\Var(\hat \nu_n^2-\nu^2)$. 
Instead when $\HH>\frac 34$, the statement follows as a consequence of Theorem \ref{cumulantNC} and Theorem \ref{th:nogaus}. In this case we also provide the precise law of the limit random variable $R^{\HH}$. We have that
\begin{align*}
\kappa_p(R^{\HH})=2^{p-1}(p-1)!  \int_{[0,1]^p} \prod_{i=1}^p f(x_i, x_{i+1}) dx_1\ldots dx_p  
\end{align*}
where $x_{p+1}=x_1$ and 
$$
f(x,y)= \frac{2(1-e^{-\a s})\nu^2}{\a^{2}e^{-\a s}I(s)\beta(2-2H, H-1/2)}  |x-y|^{2\HH-2}.
$$ 
From standard computations, we have that
\begin{align*}
	\kappa_p(I_2(g))=2^{p-1}(p-1)!  \int_{[0,1]^p} \prod_{i=1}^p f(x_i, x_{i+1}) dx_1\ldots dx_p
\end{align*}
when $g(t,t')=\frac{2 (1-e^{-\a s})\nu^2}{\a^{2}e^{-\a s}I(s)\beta(2-2\HH, \HH-1/2)}\int_0^1(u-t)_+^{\HH-\frac 32}(u-t')_+^{\HH-\frac 32}du$ and $I_2$ denotes the double Wiener-It\^o integral. Then, being $R^{\HH}$ and $I_2(g)$ determined by their cumulants, $R^{\HH} \overset{d}{=} I_2(g)$. 
\end{proof}
\begin{proof}[Proof of Theorem \ref{NU_2}]
We consider $\tilde \rho_n$ in \eqref{hat_rho_n} when $Y^2=Y^1$, i.e. $\a_1=\a_2, \HH=H_1=H_2$, $\nu_1=\nu_2, \eta_{12}=0$ and $\rho=1$. Under these assumptions, we have that
$
\tilde\nu_n^2=\nu^2 \tilde \rho_n.
$
Theorem \ref{Cons_rho} ensures that $\tilde \rho_n \to \rho=1$ in $L^2(\P)$ and then in probability when $\Delta_n\to 0$ and $n\Delta_n\to +\infty$ as $n\to +\infty$. Then 
$
\tilde\nu_n^2=\nu^2 \tilde \rho_n\to \nu^2
$
in $L^2(\P)$ and in probability.
If we assume that $n\Delta_n^2 \to 0$ and $n\Delta_n^{4-4\HH}\to 0$, Assumptions \ref{AssumptionDelta_b} hold (there $\HH=H_1+H_2$ whereas in this setting we denote $\HH=H_1=H_2$.) Then the assumptions of Theorem \ref{CLT_tilde_rho} are verified, then, when $\HH<\frac 34$, we have 
$$
\sqrt{n}(\tilde \rho_n-1)\overset{d}{\to} N
$$
where $N\sim \mathcal N(0,\sigma^2)$ and $\sigma^2=\lim_{n\to \infty} \Var(\sqrt{n}(\tilde \rho_n-1))$. Then
\begin{align*}
	&\sqrt{n}(\tilde\nu_n^2-\nu^2)=\nu^2 \sqrt{n}(\tilde \rho_n-1)\overset{d}{\to} \nu^2 N=\tilde N
\end{align*}
where $\Var(\tilde N)=\nu^4 \Var(N)$.
\end{proof}

\section{Acknowledgements}
We are grateful to Lucia Caramellino, Stefano De Marco, Georgios Fellouris, Ivan Nourdin, Mikko Pakkanen, Tommaso Proietti for valuable comments and suggestions. 
We thank Jean-François Coeurjolly for sharing the code for simulating the multivariate fractional Brownian Motion.

Funding: PP was supported by the project PRICE, financed by the Italian Ministry of University and Research under the program PRIN 2022, Prot. 2022C799SX. RD and PP were supported by the project E83C25000470005 financed by University of Rome Tor Vergata.

\appendix

\section{Gaussian Chaos}

We recall here some facts related to Malliavin calculus on the Wiener space. The following remark recalls some properties of Hilbert spaces and tensor product.	

\begin{remark}
	Let $\H$ be a separable Hilbert space. For an integer $p\geq 2$, the Hilbert spaces $\H^{\otimes p}$ and $\H^{\odot p}$ are the $p$th \textit{tensor product} of $\H$ and the $p$th \textit{symmetric tensor product} of $\H$ respectively. If $f\in \H^{\otimes p}$, then $f=\sum_{i_1,\ldots, i_p=1}^{\infty} a(i_1, \ldots, i_p) e_{i_1}\otimes e_{i_2} \cdots \otimes e_{i_p}$, where $(e_{i_1}\otimes \cdots \otimes e_{i_p})_{i_1,\ldots,i_p=1}^{\infty}$ is an orthonormal basis of $\H$. The symmetrization $\tilde f$ of $f$ is the element of $\H^{\odot p}$ such that 
	$$
	\tilde f=\frac{1}{p!}\sum_{\sigma}\sum_{i_1,\ldots,i_p=1}^{\infty} a_{i_1,\ldots, i_p} e_{\sigma(i_1)}\otimes\cdots \otimes e_{\sigma(i_p)}. 
	$$ 
	where the first sum runs over all $\sigma$ permutation of $\{1,\ldots, p\}$. The $r$th contraction of two tensor products $e_{i_1}\otimes\cdots e_{i_p}$ and $e_{j_1}\otimes \cdots e_{j_q}$ is an element of $\H^{p+q-2r}$ such that
	\begin{align}\label{contraction}
		&(e_{i_1}\otimes\cdots e_{i_p})\otimes_{r}(e_{j_1}\otimes \cdots e_{j_q})\\
		&=\Big(\prod_{k=1}^{r}\<e_{i_k}, e_{j_k}\>_{\H}\Big) e_{i_{r+1}}\otimes \cdots e_{i_p}\otimes e_{j_{r+1}}\otimes \cdots e_{j_q}\notag.
	\end{align}
\end{remark}

Let us consider a complete probability space $(\Omega, \mathcal F, \P)$ and a real separable Hilbert space $\mathbb{H}$ with inner product denoted by $\<\cdot, \cdot\>_\mathbb{H}$. From now on we denote $L^2(\Omega):=L^2(\Omega, \mathcal F, \P)$.	 
\begin{definition}\label{IsonormalGaussian}
	An \textbf{isonormal Gaussian field} over $\mathbb{H}$ is a family $X=\{\, X(h) \,:\, h\in \mathbb{H}\}$ of centered jointly Gaussian random variables on $(\Omega, \mathcal F, \P)$, whose covariance structure is given by 
	$$
	\E[X(h)X(h' )] = \< h, h' \>_\mathbb{H}, \quad{\forall h, h'\in \mathbb{H}}.
	$$ 
\end{definition} 
The following example introduces the representation as an isonormal Gaussian process of the 2fOU. 

\begin{example}\label{Bi_isonormal}
	Let $W_1, W_2$ be two independent Brownian motions on $\R$ and denote $W=(W^1, W^2)$. Let us consider $\H=L^2(\R; \R^2)$ (the space of the functions $f$ from $\R$ to $\R^2$ such that $\int_{\R} \|f(t)\|^2 dt <\infty$). We have the isonormal Gaussian field $X$ on $\H$ given by the $L^2(\Omega)$-closure of the linear space generated by the $W$. Moreover, $X$ is the family of Ito's integrals with respect to $W$. 
	
	Now, let $\{f^i(t, \cdot)\}_{t\in \R, i=1,2}$ be a family of functions in $L^2(\R^2; \R^2)$. We define a bivariate Gaussian process $Y=(Y^1, Y^2)$ as		 
	\begin{equation}\label{BivPr}
		Y^i_t=\int_{\R} \< f^i(t,s),W(ds)\>_{\R^2}. 
	\end{equation}
	We can look at $(Y^i_t)$, $i=1,2$, $t\in\R$ as a particular expression for the field $X$: for $i=1,2$ and $t\in \R$, we consider $g_t^i\in L^2(\R;\R^2)$ such that $g_t^i=f^i(t,\cdot)$. Then $Y_t^i=X(g_t^i)$. Moreover the process $Y=(Y^1, Y^2)$ defined in \eqref{BivPr} is a centered Gaussian process.
	
\end{example}  

Now we introduce the multiple Wiener-It\^o integrals. We refer to \cite[\S 2.7]{NP12} for the formal definition. Here we use the following approach. 

\begin{definition}\label{def:mul.int}
	Let us consider an isonormal Gaussian process $X$ on a separable Hilbert space $\H$ with inner product $\<\cdot, \cdot\>_{\H}$. For $p \in \N$ we denote by $H_p$ the Hermite polynomial of order $p$ (details can be found in \cite[\S 1.3]{NP12}). The Wiener-It\^o integral of order $p$ is defined as
	$$
	I_p(h^{\otimes p})=H_p(X(h))
	$$ 
	with $h \in\H$ such that $\|h\|_{\H}=1$ and $h^{\otimes p}$ is the $p$th tensor product of $h$, i.e. $\underbrace{h\otimes\cdots h}_{p \text{ times}}$. 
\end{definition}   
The multiple integrals have the following properties (see \cite{NP12}):
\begin{itemize}
	\item \textit{Isometry property}: Fix integers $p,q\geq 1$ and $f\in \H^{\odot p}$ and $g\in\H^{\odot q}$, then
	$$
	\E[I_p(f)I_q(g)]=p!\<f,g\>_{\H^{\odot p}}
	$$
	when $p=q$, $0$ otherwise;
	\item \textit{Product formula}: let $p,q\geq 1$ and  $f\in \H^{\odot p}$ and $g\in\H^{\odot q}$, then
	\begin{equation}\label{product_formula}
		I_p(f)I_q(g)=\sum_{r=0}^{p\wedge q} r!\binom{p}{r}\binom{q}{r} I_{p+q-2r}(f \tilde \otimes_{r} g).
	\end{equation}
	
\end{itemize}

Let us recall the definitions of three probability metrics over the space of probability measures: the Kolmogorov distance ($d_K$), the Total Variation distance ($d_{TV}$) and the Wasserstein distance ($d_W$). For further details, we refer to \cite{NP12}. Let $X,Y$ be two real random variables. The Kolmogorov distance between $X$ and $Y$ is defined as 
\begin{equation*}
	d_{K}(X,Y):=\sup_{z\in\R} |\P(X\leq z)-\P(Y\leq z)|.
\end{equation*}
The Total Variation distance between $X$ and $Y$ is defined as
\begin{equation*}
	\dTV(X,Y) := \sup_{A\in \mathcal B(\mathbb R)}\left | \mathbb P(X\in A) - \mathbb P(Y\in A)\right |.
\end{equation*}
When $X,Y$ are integrable, the Wasserstein distance between $X$ and $Y$ is defined as 
\begin{equation*}
	\dW(X,Y) := \sup_{h\in \text{Lip}(1)} \left |\mathbb E[h(X)] - \mathbb E[h(Y)] \right |,
\end{equation*}
where $\text{Lip}(1)$ denotes the space of functions $h:\mathbb R\to \mathbb R$ which are Lipschitz continuous with Lipschitz constant $\le 1$.

\begin{proposition}\label{dWTVKgaussian}
	Let $N_1\sim\mathcal N(0,\sigma_1^2)$ and $N_2\sim \mathcal N(0,\sigma_2^2)$. Then
	\begin{align*}
		&d_K(N_1,N_2)\leq \frac{1}{\sigma_1^2\vee\sigma_2^2}|\sigma_1^2-\sigma_2^2|\\
		&\dW(N_1,N_2)\leq \frac{\sqrt{\frac{2}{\pi}}}{\sigma_1\vee\sigma_2}|\sigma_1^2-\sigma_2^2|\\
		&\dTV(N_1,N_2)\leq \frac{2}{\sigma_1^2\vee\sigma_2^2}|\sigma_1^2-\sigma_2^2|.
	\end{align*}
\end{proposition}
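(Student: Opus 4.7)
My plan is to obtain all three bounds in a uniform way via Stein's method, applied to the Gaussian with the \emph{larger} variance. Set $\sigma^2 := \sigma_1^2 \vee \sigma_2^2$, take a test function $h$ in the class appropriate for the given metric, and let $g_h$ denote the solution of the Stein equation
\[
\sigma^2 g_h'(y) - y\, g_h(y) = h(y) - \E h(N), \qquad N \sim \mathcal{N}(0,\sigma^2).
\]
Assume without loss of generality $\sigma_2^2 = \sigma^2 \geq \sigma_1^2$. I would then evaluate the Stein equation at $N_1$, take expectations, and apply the classical Gaussian integration-by-parts identity $\E[N_1 g_h(N_1)] = \sigma_1^2\, \E g_h'(N_1)$ (valid because $N_1$ is centered Gaussian with variance $\sigma_1^2$) to arrive at the clean identity
\[
\E h(N_1) - \E h(N_2) = (\sigma_2^2 - \sigma_1^2)\, \E g_h'(N_1),
\]
which yields $|\E h(N_1) - \E h(N_2)| \leq |\sigma_1^2 - \sigma_2^2|\, \|g_h'\|_\infty$.

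The three distance bounds then reduce to classical Stein-factor estimates on $\|g_h'\|_\infty$ for the centered Gaussian with variance $\sigma^2$: namely $\|g_h'\|_\infty \leq 1/\sigma^2$ when $h = \mathbf{1}_{(-\infty,z]}$, giving the $d_K$ bound with constant $1$ upon taking $\sup_z$; $\|g_h'\|_\infty \leq 2/\sigma^2$ when $h$ is bounded by $1$, giving the $\dTV$ bound with constant $2$ upon taking $\sup$ over Borel indicators; and $\|g_h'\|_\infty \leq \sqrt{2/\pi}/\sigma$ when $h$ is $1$-Lipschitz, giving the $\dW$ bound with constant $\sqrt{2/\pi}$ and the denominator $\sigma$ instead of $\sigma^2$. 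These Stein-factor bounds are standard and follow from the explicit integral representation of $g_h$ via the rescaling $y\mapsto y/\sigma$ that reduces them to the $\mathcal{N}(0,1)$ case treated in detail in \cite[\S 3]{NP12}.

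An alternative shortcut for the Wasserstein estimate is available via the direct coupling $N_i = \sigma_i Z$ with $Z\sim\mathcal{N}(0,1)$: it gives $\dW(N_1,N_2) \leq \E|N_1-N_2| = |\sigma_1-\sigma_2|\sqrt{2/\pi}$, and one concludes by $|\sigma_1-\sigma_2| = |\sigma_1^2 - \sigma_2^2|/(\sigma_1+\sigma_2) \leq |\sigma_1^2 - \sigma_2^2|/(\sigma_1 \vee \sigma_2)$. The main subtlety I anticipate is the choice of \emph{which} of the two Gaussians to use for the Stein equation: performing the computation with the opposite choice $\sigma^2 = \sigma_1^2 \wedge \sigma_2^2$ would only produce the weaker bound with the smaller variance in the denominator, so the key to the optimal constants stated in the proposition is to associate Stein's equation with the larger of the two variances throughout.
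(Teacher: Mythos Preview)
The paper does not actually prove this proposition: it is stated in the appendix as a recalled fact, with no proof given (the surrounding material is a summary of standard Stein--Malliavin tools from \cite{NP12}). So there is no ``paper's own proof'' to compare against here.

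That said, your argument is correct and is exactly the standard route. The identity
\[
\E h(N_1)-\E h(N_2)=(\sigma_2^2-\sigma_1^2)\,\E g_h'(N_1)
\]
via Gaussian integration by parts is the right mechanism, and the three Stein-factor bounds you quote ($1/\sigma^2$, $2/\sigma^2$, $\sqrt{2/\pi}/\sigma$) are precisely the ones obtained by rescaling the $\mathcal N(0,1)$ bounds in \cite[Ch.~3]{NP12}. Your observation that one must anchor the Stein equation at the \emph{larger} variance to land $\sigma_1^2\vee\sigma_2^2$ in the denominator is the only non-automatic point, and you have it right. The coupling shortcut for $\dW$ is also fine and arguably cleaner than going through Stein factors for that case.
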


Theorem 5.13 in \cite{NP12} gives a direct connection between stochastic calculus and probability metrics.

\begin{theorem}\label{principal_0}
	Let $F\in\mathbb D^{1,2}$ such that $\E[F]=0$ and $\E[F^2]=\sigma^2<+\infty$. Then we have for $N\sim \mathcal N(0,1)$  
	$$
	\dW(F, N)\leq\sqrt{ \frac2{\pi \sigma^2}}\E[|\sigma^2-\<DF, -DL^{-1}F\>_\H|].
	$$
	Also, assuming that $F$ has a density, we have
	\begin{align*}
		&\dTV(F,  N)\leq \frac 2{\sigma^2}\E[|\sigma^2-\<DF, -DL^{-1}F\>_\H|] \\
		&d_{K}(F,  N)\leq \frac 1{\sigma^2}\E[|\sigma^2-\<DF, -DL^{-1}F\>_\H|] 
	\end{align*}
	Moreover, if $F\in\mathbb D^{1,4}$, we have 
	$$
	\E[|\sigma^2-\<DF, -DL^{-1}F\>_\H|] \leq \sqrt{\Var(\<DF, -DL^{-1}F\>)}.
	$$
\end{theorem}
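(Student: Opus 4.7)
The plan is to prove this classical Stein--Malliavin estimate by combining Stein's method for Gaussian approximation with the integration-by-parts formula on Wiener space. I will treat the three distances simultaneously. The starting point is Stein's characterization: if $N\sim\mathcal N(0,\sigma^2)$, then for each test function $h$ in the class defining one of the three metrics (Lipschitz with constant one for $d_W$, bounded measurable for $d_{TV}$, indicator $\mathbf 1_{(-\infty,z]}$ for $d_K$), Stein's equation
$$\sigma^2 f'_h(x)-x f_h(x)=h(x)-\E[h(N)]$$
admits a bounded, absolutely continuous solution $f_h$ whose derivative satisfies the sharp bounds $\|f'_h\|_\infty\le\sqrt{2/(\pi\sigma^2)}$, $\|f'_h\|_\infty\le 2/\sigma^2$, $\|f'_h\|_\infty\le 1/\sigma^2$ respectively. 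These are classical ODE estimates (see, e.g., Proposition~3.5.1 in \cite{NP12}).

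The second ingredient is the Malliavin integration-by-parts formula. Since $F\in\mathbb D^{1,2}$ and $\E[F]=0$, we may write $F=LL^{-1}F=-\delta DL^{-1}F$, where $L$ is the Ornstein--Uhlenbeck generator and $\delta$ is the divergence operator (adjoint of $D$). Combining the duality $\E[\delta(u)G]=\E[\langle u,DG\rangle_\H]$ with the chain rule $D(f_h(F))=f'_h(F)DF$ yields
$$\E[Ff_h(F)]=\E\bigl[f'_h(F)\langle DF,-DL^{-1}F\rangle_\H\bigr].$$
Evaluating Stein's equation at $F$ and taking expectations then gives the key identity
$$\E[h(F)]-\E[h(N)]=\E\bigl[f'_h(F)\bigl(\sigma^2-\langle DF,-DL^{-1}F\rangle_\H\bigr)\bigr],$$
so that $|\E[h(F)]-\E[h(N)]|\le\|f'_h\|_\infty\,\E[|\sigma^2-\langle DF,-DL^{-1}F\rangle_\H|]$. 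Taking the supremum over the appropriate class of test functions and inserting the three bounds on $\|f'_h\|_\infty$ produces the three displayed inequalities. For $d_K$, the fact that $h=\mathbf 1_{(-\infty,z]}$ is not smooth is handled by regularization and a passage to the limit, which is why the statement assumes $F$ has a density.

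For the last inequality, I would first apply the Malliavin IBP with the identity test function $f(x)=x$ to obtain
$$\E\bigl[\langle DF,-DL^{-1}F\rangle_\H\bigr]=\E[F^2]=\sigma^2,$$
so the random variable $\sigma^2-\langle DF,-DL^{-1}F\rangle_\H$ is centered. Jensen's inequality then yields
$$\E\bigl[\bigl|\sigma^2-\langle DF,-DL^{-1}F\rangle_\H\bigr|\bigr]\le\sqrt{\Var\bigl(\langle DF,-DL^{-1}F\rangle_\H\bigr)},$$
and the hypothesis $F\in\mathbb D^{1,4}$ ensures (via Meyer-type inequalities) that $\langle DF,-DL^{-1}F\rangle_\H\in L^2(\P)$, so the right-hand side is finite.

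The main obstacle in a fully self-contained exposition would be justifying the Malliavin IBP identity for the possibly non-smooth Stein solutions $f_h$ (particularly in the $d_{TV}$ and $d_K$ cases) and verifying the sharp constants in the ODE bounds. Since both are classical and collected in \cite{NP12}, the proof here essentially amounts to assembling these ingredients in the stated order. This appendix result is then used downstream only as a black box.
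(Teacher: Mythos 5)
Your argument is the standard Stein--Malliavin proof and it is correct: the Stein equation bounds $\|f_h'\|_\infty\le\sqrt{2/(\pi\sigma^2)}$, $2/\sigma^2$, $1/\sigma^2$ for the Wasserstein, total variation and Kolmogorov test classes, the integration-by-parts identity $\E[Ff_h(F)]=\E[f_h'(F)\langle DF,-DL^{-1}F\rangle_\H]$ via $F=-\delta DL^{-1}F$, and the final Cauchy--Schwarz step using $\E[\langle DF,-DL^{-1}F\rangle_\H]=\sigma^2$ are exactly the ingredients needed. Note, however, that the paper does not prove this statement at all: it is recalled in the appendix as Theorem 5.1.3 of \cite{NP12} and used as a black box, so your reconstruction is consistent with (indeed, is) the cited proof rather than an alternative to anything in the paper; the only minor discrepancy is that the theorem as printed writes $N\sim\mathcal N(0,1)$, whereas the constants you derive (correctly) correspond to comparison with $\mathcal N(0,\sigma^2)$.
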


When $F=I_q(f)$ for $f\in \H^{\odot q}$, $q\geq 2$, Theorem 5.2.6 in \cite{NP12} ensures that
$$
\E[|\sigma^2-\<DF, -DL^{-1}F\>_\H|]\leq \sqrt{\Var\Big(\frac 1q\|DF\|_\H^2\Big)}
$$
and from Lemma 5.2.4 
\begin{align}\label{v_4_est}
	\Var\Big(\frac 1q\|DF\|_\H^2\Big)&=\frac1{q^2}\sum_{r=1}^{q-1}r^2 r!^2\binom{q}{r}^4 (2q-2r)!\|f\tilde \otimes_r f\|^2_{\H^{\otimes 2q-2r}}\notag \\
	&\leq \frac1{q^2}\sum_{r=1}^{q-1}r^2 r!^2\binom{q}{r}^4 (2q-2r)!\|f \otimes_r f\|^2_{\H^{\otimes 2q-2r}}.
\end{align}
Finally, we conclude that
\begin{align}\label{c_4_est}
	\Var\Big(\frac 1q\|DF\|_\H^2\Big)\leq \frac{q-1}{3q} \kappa_4(F)\leq (q-1)\Var\Big(\frac 1q\|DF\|_\H^2\Big)
\end{align}
where 
$$
\kappa_4(F)=\E[F^4]-3\E[F^2]^2
$$
is the fourth cumulant of $F$. Then 
\begin{theorem}\label{Principal}
	Let $\{F_n\}_{n\in\N}$ a sequence of random variables belonging to a fixed $q$-th Wiener chaos, for fixed integer $q\geq 2$. Then 
	\begin{align*}
		&d_M\Big(\frac{F_n}{\sqrt{\Var(F_n)}},N\Big) \leq C_{M}(q)\sqrt{\frac{\kappa_4(F_n)}{\Var(F_n)^2}} \\
	\end{align*}
	where $N\sim \mathcal N(0,1)$ and $M$ stays for $K, TV, W$. In particular, when 
	$
	\frac{\kappa_4(F_n)}{\Var(F_n)^2}\to 0
	$ 
	then 
	$$
	\frac{F_n}{\sqrt{\Var(F_n)}}\overset{d}{\to} N .
	$$
\end{theorem}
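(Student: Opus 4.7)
The plan is to assemble the bound by chaining together the three ingredients already stated just above the theorem (Theorem \ref{principal_0}, the estimate \eqref{v_4_est}, and the comparison \eqref{c_4_est}), after first reducing to the standardized setting $\sigma^{2}=1$. Concretely, set $\tilde F_{n}:=F_{n}/\sqrt{\Var(F_{n})}$; since membership in the $q$-th Wiener chaos is preserved under scalar multiplication, $\tilde F_{n}$ still belongs to the $q$-th chaos, satisfies $\E[\tilde F_{n}]=0$, $\Var(\tilde F_{n})=1$, and moreover $\tilde F_{n}\in\mathbb D^{1,4}$ automatically since all $L^{p}$ norms are equivalent on a fixed Wiener chaos (hypercontractivity).

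Next I would apply Theorem \ref{principal_0} to $F=\tilde F_{n}$ with $\sigma^{2}=1$, yielding for each metric $M\in\{K,\mathrm{TV},\mathrm{W}\}$ a bound of the form $d_{M}(\tilde F_{n},N)\le c_{M}\,\E[|1-\<D\tilde F_{n},-DL^{-1}\tilde F_{n}\>_{\H}|]$, where $c_{K}=1$, $c_{\mathrm{TV}}=2$, $c_{\mathrm{W}}=\sqrt{2/\pi}$. Since $\tilde F_{n}$ lives in the $q$-th chaos, $-L^{-1}\tilde F_{n}=q^{-1}\tilde F_{n}$, so $-DL^{-1}\tilde F_{n}=q^{-1}D\tilde F_{n}$ and the inner product simplifies to $q^{-1}\|D\tilde F_{n}\|_{\H}^{2}$. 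Using Cauchy--Schwarz (as invoked right after Theorem \ref{principal_0}) gives
\[
\E\Bigl[\Bigl|1-\tfrac{1}{q}\|D\tilde F_{n}\|_{\H}^{2}\Bigr|\Bigr]\le\sqrt{\Var\Bigl(\tfrac{1}{q}\|D\tilde F_{n}\|_{\H}^{2}\Bigr)}.
\]

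Now I would invoke the key comparison \eqref{c_4_est}, which bounds this variance by $\tfrac{q-1}{3q}\,\kappa_{4}(\tilde F_{n})$. Combining,
\[
d_{M}(\tilde F_{n},N)\le c_{M}\sqrt{\tfrac{q-1}{3q}}\,\sqrt{\kappa_{4}(\tilde F_{n})}=:C_{M}(q)\sqrt{\kappa_{4}(\tilde F_{n})}.
\]
Finally, the cumulant of order four scales as $\kappa_{4}(\lambda X)=\lambda^{4}\kappa_{4}(X)$, so $\kappa_{4}(\tilde F_{n})=\kappa_{4}(F_{n})/\Var(F_{n})^{2}$, which yields exactly the announced inequality.

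For the convergence statement, I would observe that $d_{\mathrm{TV}}$ (and also $d_{K}$, $d_{\mathrm{W}}$) metrizes weak convergence on bounded sequences of probability measures on $\R$ for test sets/functions of the prescribed type; more elementarily, convergence to $0$ in $d_{K}$ (which is what \eqref{c_4_est} delivers under the hypothesis $\kappa_{4}(F_{n})/\Var(F_{n})^{2}\to 0$) implies convergence of distribution functions at every point where the limit is continuous, and since the Gaussian law has a continuous distribution function this gives $\tilde F_{n}\overset{d}{\to}N$. There is no genuine obstacle in the proof: the only delicate point is the algebraic identity $-DL^{-1}F=q^{-1}DF$ on the $q$-th chaos, and the chain \eqref{v_4_est}--\eqref{c_4_est}, both of which are stated or cited above; the scaling step in $\kappa_{4}$ must be applied carefully but is routine.
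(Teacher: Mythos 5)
Your proposal is correct and assembles the bound exactly as the paper intends: it is the chain of Theorem \ref{principal_0}, the identity $-DL^{-1}F=q^{-1}DF$ on the $q$-th chaos, the Cauchy--Schwarz step, and the comparison \eqref{v_4_est}--\eqref{c_4_est}, followed by the homogeneity $\kappa_4(\lambda X)=\lambda^4\kappa_4(X)$ to undo the standardization. The only detail worth making explicit is that the $d_{\mathrm{TV}}$ and $d_K$ bounds of Theorem \ref{principal_0} require $\tilde F_n$ to have a density, which holds for any nonzero element of a fixed Wiener chaos.
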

A fundamental consequence of the above theorem is the Fourth-Moment Theorem (Theorem 5.2.7 in \cite{NP12}).
\begin{theorem}[Fourth-Moment Theorem]\label{FOURTH}
	Let $F_n=I_q(f_n)$, $n\geq 1$, be a sequence of random variables belonging to the $q$th chaos of $X$, for some fixed integer $q\geq 2$ (so that $f_n\in \H^{\odot q}$). Assume, moreover, that $\E[F_n^2]\to \sigma^2>0$ as $n\to+\infty$. Then, as $n\to +\infty$ the following assertions are equivalent:
	\begin{enumerate}
		\item $F_n$ converges in distribution to $N\sim \mathcal N(0,\sigma^2)$;
		\item $\E[F_n^4]	\to 3\sigma^4$ or equivalently $\kappa_4(F_n)\to 0$;
		\item $\Var(\|DF_n\|^2_\H)\to 0$;
		\item $\|f_n\tilde \otimes_r f_n\|_{\H^{\otimes (2q-2r)}}\to 0$, for all $r=1,\ldots, q-1$;
		\item $\|f_n \otimes_r f_n\|_{\H^{\otimes (2q-2r)}}\to 0$, for all $r=1,\ldots, q-1$.
	\end{enumerate}
\end{theorem}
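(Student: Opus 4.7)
The plan is to establish a cycle of implications among the five conditions, exploiting the fact that for elements of a fixed Wiener chaos the fourth cumulant, the variance of the Malliavin norm $\|DF\|^2_\H$, and the squared contraction norms all control one another up to explicit constants depending only on $q$. The Stein--Malliavin machinery packaged in Theorem \ref{Principal} then converts smallness of any of these into a quantitative Gaussian approximation, which closes the loop in one direction; in the other direction, hypercontractivity will be needed to upgrade distributional convergence to convergence of moments.

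First I would handle $(3) \Rightarrow (2) \Rightarrow (1)$. The two-sided sandwich \eqref{c_4_est}, namely
\[
\Var\Big(\tfrac{1}{q}\|DF\|^2_\H\Big) \leq \tfrac{q-1}{3q}\kappa_4(F) \leq (q-1)\Var\Big(\tfrac{1}{q}\|DF\|^2_\H\Big),
\]
gives $(2) \Leftrightarrow (3)$ for free, while Theorem \ref{Principal} applied together with the standing assumption $\Var(F_n)\to \sigma^2 > 0$ turns $\kappa_4(F_n) \to 0$ into $F_n \overset{d}{\to} \mathcal N(0,\sigma^2)$, i.e.\ $(2) \Rightarrow (1)$. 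For $(3) \Leftrightarrow (4)$ I would appeal to the explicit expansion \eqref{v_4_est}, which writes $\Var(q^{-1}\|DF_n\|^2_\H)$ as a finite linear combination with strictly positive coefficients of the nonnegative quantities $\|f_n \tilde\otimes_r f_n\|^2_{\H^{\otimes(2q-2r)}}$, $r=1,\ldots,q-1$; since each term is nonnegative, the sum vanishes iff each summand does. The equivalence $(4) \Leftrightarrow (5)$ follows from the elementary inequality $\|f_n \tilde\otimes_r f_n\| \leq \|f_n \otimes_r f_n\|$ in one direction, and in the other from the standard algebraic estimate expressing $\|f_n \otimes_r f_n\|^2$ as a nonnegative combination of the symmetrized contraction norms $\|f_n \tilde\otimes_s f_n\|^2$ for $s = 1,\ldots, q-1$, using that $\|f_n\|_{\H^{\otimes q}}$ stays bounded thanks to $\Var(F_n)\to\sigma^2$.

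The hard part will be closing the cycle via $(1) \Rightarrow (2)$, since convergence in distribution alone does not control fourth moments. My plan here is to invoke the hypercontractivity of the Ornstein--Uhlenbeck semigroup on Wiener space, which yields $\|I_q(g)\|_{L^p} \leq (p-1)^{q/2} \|I_q(g)\|_{L^2}$ for every $p \geq 2$ and every $g \in \H^{\odot q}$. Taking $p = 5$ together with boundedness of $\Var(F_n)$ gives $\sup_n \E[|F_n|^5] < \infty$, so the family $(F_n^4)_n$ is uniformly integrable; combined with $F_n \overset{d}{\to} \mathcal N(0,\sigma^2)$ this promotes to $\E[F_n^4] \to 3\sigma^4$, that is, $\kappa_4(F_n) \to 0$. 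Chaining these four blocks closes the cycle and yields the equivalence of all five conditions.
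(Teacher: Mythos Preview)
The paper does not actually prove Theorem~\ref{FOURTH}: it is stated in the appendix as a known result and attributed to \cite[Theorem~5.2.7]{NP12}, with the surrounding facts \eqref{v_4_est}, \eqref{c_4_est} and Theorem~\ref{Principal} also quoted without proof. So there is no ``paper's own proof'' to compare against; your proposal is effectively a reconstruction of the standard Nourdin--Peccati argument from the ingredients the appendix records.

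As such, your scheme is correct and coincides with the textbook proof. The chain $(3)\Leftrightarrow(2)$ via \eqref{c_4_est}, $(2)\Rightarrow(1)$ via Theorem~\ref{Principal}, $(3)\Leftrightarrow(4)$ via the equality in \eqref{v_4_est}, and $(1)\Rightarrow(2)$ via hypercontractivity and uniform integrability are exactly how \cite{NP12} proceeds. The one place where your write-up is loose is the direct implication $(4)\Rightarrow(5)$: there is no simple ``algebraic estimate expressing $\|f_n\otimes_r f_n\|^2$ as a nonnegative combination of the $\|f_n\tilde\otimes_s f_n\|^2$'' alone, and the boundedness of $\|f_n\|$ does not enter. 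What one actually uses is the explicit fourth-cumulant identity
\[
\kappa_4\big(I_q(f)\big)=(q!)^2\sum_{r=1}^{q-1}\binom{q}{r}^{2}\|f\otimes_r f\|^2_{\H^{\otimes(2q-2r)}}+\sum_{r=1}^{q-1}(r!)^2\binom{q}{r}^{4}(2q-2r)!\,\|f\tilde\otimes_r f\|^2_{\H^{\otimes(2q-2r)}},
\]
obtained from the product formula \eqref{product_formula} together with the combinatorial expansion of $(2q)!\|\widetilde{f\otimes f}\|^2$. Since all coefficients are positive, $(2)\Rightarrow(5)$ directly; combined with the equivalences you already have, $(4)\Rightarrow(3)\Rightarrow(2)\Rightarrow(5)$ closes the loop cleanly. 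With this small correction your argument is complete and matches the reference the paper cites.
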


The last result that we recall shows that, if we consider a random vector sequence, whose components are Wiener-It\^o integrals, then the componentwise convergence to Gaussian variables always implies the joint convergence.

\begin{theorem}[Theorem 6.2.3 in \cite{NP12}]\label{jointconvergence}
	Let $d\geq 2$ and $q_1, \ldots , q_d \geq 1$ be some fixed integers. Consider vectors $F_n= (F_{1,n}, \ldots, F_{d,n}) = (I_{q_1}( f_{1,n}),\ldots , I_{q_d} ( f_{d,n}))$, $n \geq 1$, with $f_{i,n}\in \H^{\odot q_i}$. Let $C \in \mathcal M_d(\R)$ be a symmetric non-negative definite matrix, and let $N \sim N_d (0,C)$. Assume that
	$$
	\lim_{n\to \infty} \E[F_{i,n}F_{j,n}]=C(i,j), \quad{1\leq i,j\leq d}.
	$$
	Then, as $n\to \infty$, the following two conditions are equivalent:
	\begin{itemize}
		\item[a)] $F_n$ converges in law to $N$.
		\item[b)]  For every $i=0,\ldots, d$, $F_{i,n}$ converges in law to $\mathcal N(0, C(i,i))$.
	\end{itemize}
\end{theorem}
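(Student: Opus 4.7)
The implication $(a) \Rightarrow (b)$ is immediate by the continuous mapping theorem applied to the coordinate projections $\pi_i\colon \R^d \to \R$. I focus on the converse $(b) \Rightarrow (a)$.

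By the Cram\'er--Wold device, it suffices to show that, for every $\lambda \in \R^d$, the linear combination $G_n(\lambda) := \sum_{i=1}^d \lambda_i F_{i,n}$ converges in law to $\mathcal N(0,\lambda^\top C\lambda)$. The difficulty is that $G_n(\lambda)$ is a sum of elements from distinct Wiener chaoses of orders $q_1,\dots,q_d$, and not an element of a single chaos, so Theorem \ref{FOURTH} does not apply directly. The underlying mechanism, however, still works: convergence of each Wiener-chaos component to a Gaussian limit is controlled by the vanishing of contraction norms of the associated kernels, and this behavior lifts to the linear combination $G_n(\lambda)$ once all relevant cross-contractions are also shown to vanish.

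The plan proceeds in three steps. \emph{Step 1:} For each $i$, the hypothesis $F_{i,n} = I_{q_i}(f_{i,n}) \to \mathcal N(0,C(i,i))$, combined with Theorem \ref{FOURTH} (which applies since $F_{i,n}$ lies in the single chaos of order $q_i$), forces
$$
\|f_{i,n} \otimes_r f_{i,n}\|_{\H^{\otimes(2q_i-2r)}} \to 0, \qquad r = 1,\dots,q_i-1.
$$
\emph{Step 2:} A Cauchy--Schwarz-type estimate for tensor contractions, in a suitable form exploiting the symmetry of the kernels, transfers the self-contraction bounds to the mixed contractions, yielding $\|f_{i,n}\otimes_r f_{j,n}\|_{\H^{\otimes(q_i+q_j-2r)}} \to 0$ for all $i \neq j$ and $1\leq r <q_i\wedge q_j$. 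In the extreme case $r = q_i = q_j$, the contraction is just the scalar $q_i!\,\langle f_{i,n}, f_{j,n}\rangle = \E[F_{i,n}F_{j,n}]$, which by hypothesis converges to $C(i,j)$. \emph{Step 3:} Expanding $G_n(\lambda)^p$ by iterating the product formula \eqref{product_formula} and computing cumulants, one finds that for $p \geq 3$ every summand contributing to $\kappa_p(G_n(\lambda))$ is controlled by an $L^2$-norm of an iterated contraction of the $f_{i,n}$'s, and hence vanishes in the limit by Steps 1--2. Since $\kappa_1(G_n(\lambda)) = 0$ and $\kappa_2(G_n(\lambda)) = \sum_{i,j}\lambda_i\lambda_j\, \E[F_{i,n}F_{j,n}] \to \lambda^\top C\lambda$, the method of cumulants yields $G_n(\lambda)\to \mathcal N(0,\lambda^\top C\lambda)$, as required.

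The main technical obstacle is Step 2: propagating the single-kernel fourth-moment estimates to the mixed contractions. This is done via a variant of the Cauchy--Schwarz inequality in the tensor product Hilbert space, exploiting the symmetry of the kernels. An arguably cleaner alternative is to package everything into the multivariate Stein--Malliavin bound of the form
$$
d_{W}\bigl(F_n, N\bigr) \leq c \sum_{i,j=1}^d \sqrt{\Var\bigl(\langle DF_{i,n}, -DL^{-1}F_{j,n}\rangle_{\H}\bigr)} + c\sum_{i,j=1}^{d}\bigl|C(i,j) - \E[F_{i,n}F_{j,n}]\bigr|,
$$
after which the chaos decomposition of each inner product $\langle DF_{i,n}, -DL^{-1}F_{j,n}\rangle_{\H}$ expresses its variance as a sum of norms of contractions $f_{i,n}\otimes_r f_{j,n}$, all controlled by Step 1 through the same Cauchy--Schwarz device.
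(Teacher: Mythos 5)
The paper does not prove this statement: it is imported verbatim as Theorem 6.2.3 of \cite{NP12} (the Peccati--Tudor theorem), so there is no internal proof to compare against. Your sketch reproduces the standard argument from that reference --- componentwise fourth-moment theorem to kill the self-contractions, the identity $\|f\otimes_r g\|^2=\langle f\otimes_{p-r}f,\,g\otimes_{q-r}g\rangle$ plus Cauchy--Schwarz to transfer this to the mixed contractions, and then either the cumulant method or the multivariate Stein--Malliavin bound --- and is essentially correct. The only imprecision is in Step~2: when $q_i\neq q_j$ you also need the boundary contraction $r=q_i\wedge q_j$ (which is not a scalar in that case); it is handled by the same device, since $\|f_{i,n}\otimes_{q_i}f_{j,n}\|^2\leq\|f_{i,n}\|^2\,\|f_{j,n}\otimes_{q_j-q_i}f_{j,n}\|$ for $q_i<q_j$ and the kernel norms stay bounded because $\E[F_{j,n}^2]$ converges.
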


\section{The univariate fractional Ornstein-Uhlenbeck process}\label{sec:1dimfou}

Here we recall the definition of the univariate fOU process and its main properties. In this discussion, we primarily follow the work by Cheridito et al. \cite{cheridito2003}. 
Let $(\Omega, \mathcal F, \P)$ be a probability space. Let us fix $\a\in \R_+$. Then for all $t>b$, $t,b\in\R$, the random variable
$$
X^b_t=\int_{b}^t e^{\a u} dB_u^H(\omega).
$$ 
exists as a pathwise Riemann-Stieltjes integral, as per Proposition A.1 in \cite{cheridito2003}. Moreover, it can be expressed as:
\begin{equation}\label{FormX}
	X_t = e^{\alpha t} B_t^H(\omega) - e^{\alpha b} B_b^H(\omega) - \alpha \int_{b}^t B_u^H(\omega) e^{\alpha u} du.
\end{equation}
Now, consider $\alpha, \nu > 0$, and $\psi \in L^0(\Omega)$. The solution to the Langevin equation
\begin{equation}\label{Langeq}
	Y_t^H = \psi - \alpha\int_{0}^t Y_s ds + \nu B_t^H, \quad t\geq 0
\end{equation}
exists as a path-wise Riemann-Stieltjes integral, and it is given by
$$
Y_t^{H, \psi}= e^{-\a t}\Big(\psi+\nu \int_{0}^t e^{\a u}dB_u^H \Big),\,\,\, t\geq 0.
$$
It is the unique almost surely continuous process that solves \eqref{Langeq}. In particular, the process
\begin{equation}\label{fOU1}
	Y_t^H = \nu\int_{-\infty}^t e^{-\alpha(t-u)} dB_u^H, \quad t\in \R,
\end{equation}
solves \eqref{Langeq} with the initial condition $\psi = Y_0^H$. From \eqref{FormX}, it follows that $Y_t^H$ has the following almost surely representation:
\begin{equation}\label{repY}
	Y_t^H = \nu \left( B_t^H - \alpha\int_{-\infty}^t B_u^H e^{-\alpha(t- u)} du \right).
\end{equation}
The process $Y^H$ is the stationary fOU process.  Let us recall Lemma 2.1 in \cite{cheridito2003}.

\begin{lemma}\label{repLemma21}
	Let $H\in (0,\frac 12)\cup (\frac 12, 1]$, $\a>0$ and $-\infty\leq a<b\leq c<d<+\infty$. Then
	$$
	\E\Big[\int_a^b e^{\a u} dB_u^H \int_c^de^{\a v}dB_v^H\Big]=H(2H-1)\int_a^b e^{\a u}\Big(\int_c^d e^{\a v}(v-u)^{2H-2}dv\Big) du.
	$$
\end{lemma}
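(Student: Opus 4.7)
The plan is to use the path-wise integration by parts identity \eqref{FormX} to express each Riemann--Stieltjes integral in terms of the values of $B^{H}$ at the endpoints together with a Lebesgue integral of $B^{H}$, then to compute the expectation term by term using Fubini and the fBm covariance function
\[
R(s,t):=\E[B_{s}^{H}B_{t}^{H}]=\tfrac{1}{2}\bigl(|s|^{2H}+|t|^{2H}-|t-s|^{2H}\bigr).
\]
The key analytic fact driving the identity is that, for $s\neq t$,
\[
\partial_{s}\partial_{t}R(s,t)=H(2H-1)|s-t|^{2H-2},
\]
so that morally $\E[\,dB_{u}^{H}\,dB_{v}^{H}\,]=H(2H-1)|u-v|^{2H-2}\,du\,dv$, which is the right-hand side once weighted by $e^{\alpha u}e^{\alpha v}$. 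The hypothesis $b\le c$ ensures $v-u\ge 0$ throughout the domain of integration, so no singularity arises on the diagonal of the product $[a,b]\times[c,d]$ except possibly at the single point $(b,c)$.

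First, by \eqref{FormX} one has
\[
X_{1}:=\int_{a}^{b}e^{\alpha u}\,dB_{u}^{H}=e^{\alpha b}B_{b}^{H}-e^{\alpha a}B_{a}^{H}-\alpha\int_{a}^{b}e^{\alpha u}B_{u}^{H}\,du,
\]
and an analogous expression for $X_{2}:=\int_{c}^{d}e^{\alpha v}\,dB_{v}^{H}$. Expanding the product $X_{1}X_{2}$, taking expectation, and applying Fubini, $\E[X_{1}X_{2}]$ becomes a linear combination of nine terms of the form $R(p,q)$ (with $p\in\{a,b\}\cup[a,b]$ and $q\in\{c,d\}\cup[c,d]$) with coefficients built from $\alpha$ and exponentials $e^{\alpha\cdot}$.

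Next, I would rewrite the right-hand side of the claim using two successive integrations by parts. Since $(v-u)^{2H-2}=\partial_{v}\bigl[(v-u)^{2H-1}/(2H-1)\bigr]=-\partial_{u}\partial_{v}\bigl[(v-u)^{2H}/(2H(2H-1))\bigr]$ for $v>u$, integrating the inner integral by parts in $v$ and the outer one in $u$, and using Fubini to justify the interchange (legitimate because $b\le c$ makes the integrand of $(v-u)^{2H}$ bounded on $[a,b]\times[c,d]$), one obtains an expression involving boundary terms $(d-a)^{2H},(d-b)^{2H},(c-a)^{2H},(c-b)^{2H}$ together with single and double Lebesgue integrals of $e^{\alpha\cdot}e^{\alpha\cdot}(v-u)^{2H}$. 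Comparing with the covariance expansion of Step 1, the $|p|^{2H}$ and $|q|^{2H}$ contributions in $R(p,q)$ cancel pairwise (they depend only on one of the two variables, and the expansion of $X_{1}X_{2}$ produces them with matching signs once the boundary contributions from the two integrations by parts are collected), while the $-\tfrac{1}{2}|q-p|^{2H}$ pieces exactly match the boundary and volume terms produced by the double integration by parts of the right-hand side. Matching the two expressions identity by identity yields the claim.

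The main obstacle I anticipate is the careful bookkeeping of signs and exponential factors in the boundary terms produced by the two integrations by parts, together with the verification that all the $|p|^{2H}$-type contributions coming from $R(p,q)=\tfrac{1}{2}(|p|^{2H}+|q|^{2H}-|q-p|^{2H})$ cancel. The degenerate case $b=c$ requires a short additional argument: when $H>1/2$ the integrand $(v-u)^{2H-2}$ remains integrable on $[a,b]\times[c,d]$, while when $H<1/2$ one first works with $b<c-\varepsilon$ and lets $\varepsilon\downarrow 0$, noting that both sides of the identity are continuous in the endpoints (the left-hand side by $L^{2}$-continuity of $X_{1}$ in $b$, the right-hand side by dominated convergence together with the integration-by-parts rewriting which removes the singular exponent).
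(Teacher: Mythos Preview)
The paper does not supply its own proof of this lemma: it is simply recalled as Lemma~2.1 of Cheridito, Kawaguchi and Maejima \cite{cheridito2003}. Your strategy---rewrite each stochastic integral via the pathwise integration-by-parts identity \eqref{FormX}, expand $\E[X_{1}X_{2}]$ using $R(s,t)=\tfrac12(|s|^{2H}+|t|^{2H}-|s-t|^{2H})$, and match against a double integration by parts of the right-hand side---is exactly the standard argument used in that reference, so there is no alternative route to compare against.

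Two minor remarks. First, the cancellation of the $|p|^{2H}$ and $|q|^{2H}$ terms that you flag as ``the main obstacle'' is in fact automatic: the signed measure underlying the integration-by-parts expansion of each $X_i$ has total mass zero (for instance $e^{\alpha b}-e^{\alpha a}-\alpha\int_a^b e^{\alpha u}\,du=0$), so any contribution to $R(p,q)$ depending on only one of the two variables disappears upon summing. Second, you handle the boundary case $b=c$ but not the case $a=-\infty$ that the statement explicitly allows; it goes through by the same limiting argument, since $e^{\alpha a}|a|^{H}\to 0$ and $\alpha\int_{-\infty}^{b}e^{\alpha u}|u|^{H}\,du<\infty$ for $\alpha>0$.
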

Now, let us provide an explicit expression for the autocovariance function and the variance of $Y_t^H$, where $t\in \R$. Due to the stationarity of $Y^H$, we recall that $\Var(Y_t^H) = \Var(Y_0^H)$ for all $t\in \R$. From \cite{PipirasTaqqu}, we have
\begin{equation}\label{PTformula}
	\Cov(Y^H_t, Y^H_{t+s}) = \nu^2\frac{\Gamma(2H+1)\sin\pi H}{2\pi}\int_{-\infty}^{+\infty} e^{\mathbf{i}sx} \frac{|x|^{1-2H}}{\alpha^2+x^2} dx.
\end{equation}
and it follows that the variance of the process is given by
\begin{equation}\label{varfOU1}
	\Var(Y_t^H)=\Var(Y^H_0)=\nu^2\frac{\Gamma(2H+1)}{2\a^{2H}}.
\end{equation}

Next, we recall Theorem 2.3 in \cite{cheridito2003}, which provides the asymptotic behavior of the autocovariance function of $Y^H$. Additionally, we present a result regarding the regularity of the covariance function.

\begin{theorem}\label{Decay1}[Theorem 2.3 in \cite{cheridito2003}]
	Let $H\in (0,\frac 12)\cup (\frac 12,1]$ and $N\in\N$. Let $Y^H$ the fOU in \eqref{fOU1}. Then for $t\in \R$ and $s\to \infty$, 
	\begin{equation}\label{cov1dim}
		\Cov(Y_t^H,Y_{t+s}^H)=\frac{1}{2}\nu^2 \sum_{n=1}^N \a^{-2n}\Big(\prod_{k=0}^{2n-1}(2H-k)   \Big)s^{2H-2n} +O(s^{2H-2N-2}).
	\end{equation}
	
\end{theorem}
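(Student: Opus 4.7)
The approach is essentially the one-dimensional specialization of the computation carried out in the proof of Theorem \ref{cov}. By stationarity of $Y^H$ we have $\Cov(Y_t^H, Y_{t+s}^H) = \Cov(Y_0^H, Y_s^H)$, and using the moving average representation \eqref{fOU1} we can write
\[
\Cov(Y_0^H,Y_s^H) = \nu^2 e^{-\alpha s}\,\E\!\left[\int_{-\infty}^0 e^{\alpha u}dB_u^H \int_{-\infty}^s e^{\alpha v}dB_v^H\right].
\]
Splitting the second integral as $\int_{-\infty}^0 + \int_0^s$, the first piece yields, by \eqref{varfOU1}, the bounded term $e^{-\alpha s}\nu^2\Gamma(2H+1)/(2\alpha^{2H})$, which is $O(e^{-\alpha s})$ and therefore already absorbed in the $O(s^{2H-2N-2})$ remainder for any $N$. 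For the second piece I would apply Lemma \ref{repLemma21} (the univariate version recalled above) to rewrite it as
\[
H(2H-1)\int_0^s e^{\alpha v}\!\int_{-\infty}^0 e^{\alpha u}(v-u)^{2H-2}\,du\,dv.
\]

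The next step is to simplify this double integral by the change of variables $y=v-u$, $z=v+u$, exactly as in the proof of Theorem \ref{cov}. Integrating out $z$ explicitly (using that the exponent in $z$ becomes $\alpha z$) and regrouping, one reduces the computation to integrals of the form
\[
e^{-\alpha s}\int_1^{\alpha s} y^{2H-2}e^y\,dy \quad\text{and}\quad e^{\alpha s}\int_{\alpha s}^{\infty} y^{2H-2}e^{-y}\,dy,
\]
up to an $O(e^{-\alpha s})$ error coming from the contribution of the region $y\le 1/\alpha$. The $H\in(0,1/2)$ case is handled by the same formula since Lemma \ref{repLemma21} covers both ranges.

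The third step is to apply Lemma 2.2 of \cite{cheridito2003}, which gives the full asymptotic expansion of precisely these two integrals: repeated integration by parts yields, for every $N$,
\[
e^{-\alpha s}\int_1^{\alpha s} y^{2H-2}e^y\,dy = \sum_{n=0}^{N}\alpha^{-(2H-1)}\Big(\prod_{k=0}^{n-1}(2H-2-k)\Big)(\alpha s)^{2H-2-n} + O(s^{2H-2-N-1}),
\]
and similarly for the second integral (the overall constant $\alpha^{-(2H-1)}$ is absorbed after combining). Summing the two expansions, all odd-order terms in $1/s$ cancel by parity (since the sign pattern from the two pieces is opposite on odd powers), leaving only the even powers $s^{2H-2n}$, which matches the index set in \eqref{cov1dim}. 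Collecting the constants $\nu^2 H(2H-1)/\alpha$ together with the telescoping product $\prod_{k=0}^{n-1}(2H-2-k)$ gives exactly the stated coefficient $\tfrac12 \nu^2\alpha^{-2n}\prod_{k=0}^{2n-1}(2H-k)$.

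The main bookkeeping obstacle is verifying that the odd-order terms indeed cancel and that the constants combine to produce the compact product $\prod_{k=0}^{2n-1}(2H-k)$ with the prefactor $1/2$; this is a mechanical but careful algebraic check. No deeper analytical input is needed beyond the Riemann–Stieltjes covariance identity in Lemma \ref{repLemma21} and the asymptotic expansion from \cite[Lem.~2.2]{cheridito2003}.
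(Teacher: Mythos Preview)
Your proposal is correct. Note that the paper does not actually give its own proof of Theorem \ref{Decay1}: it is simply recalled from \cite{cheridito2003} in the appendix, and the paper instead proves the multivariate generalization Theorem \ref{cov}, remarking afterwards that the univariate case is recovered by taking $\alpha_i=\alpha_j$, $H_i=H_j$, $\nu_i=\nu_j$, $\rho_{ij}=1$, $\eta_{ij}=0$. Your argument is precisely the one-dimensional specialization of that proof (stationarity, splitting off the $O(e^{-\alpha s})$ piece, the change of variables $y=v-u$, $z=v+u$, reduction to the two incomplete-Gamma-type integrals, and application of Lemma~2.2 in \cite{cheridito2003}); in particular, your observation that the odd-order terms cancel corresponds exactly to the fact that the coefficient $(-1)^n/\alpha_j^{n+1}+1/\alpha_i^{n+1}$ in \eqref{decCr} vanishes for odd $n$ when $\alpha_i=\alpha_j$.
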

The autocovariance decays as a power-law, particularly illustrating long-range dependence for $H\in(1/2, 1]$. Theorem \ref{Decay1} implies the ergodicity of the fOU process. We can also deduce that, when $|t-s|\to 0$, $\Cov(Y_t^H, Y_s^H)\to \Var(Y_0^H)$, because of the stationarity of $Y^H$. The following result holds.
\begin{lemma}\label{shorttime1dim}
	Let $H \neq 1/2$. Then, as $t\to s$
	\begin{align*}
		&	\Cov(Y^H_{t}, Y^H_s)=\\
		&=\Var(Y^H_0)-\frac{\nu^2}{2}|t-s|^{2H}+\frac{\a^2}{2}\Var(Y^H_0)|t-s|^2 - \frac{\a^2\nu^2|t-s|^{2H+2}}{4(H+1)(1+2H)} +o(|t-s|^{4}).
	\end{align*}
\end{lemma}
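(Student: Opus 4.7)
The plan is to adapt the argument used in the proof of Lemma \ref{shorttime} to the univariate setting. By stationarity, $\Cov(Y^H_t, Y^H_s)$ depends only on $\tau = |t-s|$, and from the integral representation \eqref{fOU1} one obtains the semi-group type identity
\[
Y^H_\tau = e^{-\alpha\tau} Y^H_0 + \nu\int_0^\tau e^{-\alpha(\tau-u)}\, dB^H_u.
\]
Multiplying by $Y^H_0$, taking expectations, and using Lemma \ref{repLemma21} to evaluate the cross term yields
\[
\Cov(Y^H_0, Y^H_\tau) = e^{-\alpha\tau}\Var(Y^H_0) + \nu^2 e^{-\alpha\tau} H(2H-1)\, I(\tau),
\]
where, after the change of variable $u \mapsto -u$,
\[
I(\tau) = \int_0^\tau e^{\alpha v}\int_0^\infty e^{-\alpha u}(u+v)^{2H-2}\,du\,dv.
\]

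The central step is the short-time expansion of $I(\tau)$ up to order $\tau^{2H+2}$. Following the proof of Lemma \ref{shorttime} specialised to $H_{ij}=2H$ and $\alpha_i=\alpha_j=\alpha$, one expands the integrand in powers of $v$, extracting the non-analytic contributions of orders $\tau^{2H}$, $\tau^{2H+1}$ and $\tau^{2H+2}$ from the singular behaviour at $v=0$, and the regular polynomial contributions of orders $\tau$, $\tau^2$ from Taylor expansion of the smooth part. When $H>1/2$ the reduction is immediate since $\int_0^\infty e^{-\alpha u} u^{2H-2}\,du = \alpha^{1-2H}\Gamma(2H-1)$; for $H<1/2$ an integration by parts (or equivalently the analytic continuation of the upper incomplete gamma function) reduces the singular case to the convergent one. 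The result is an expansion of the form
\[
I(\tau) = \frac{\tau^{2H}}{2H(1-2H)} - \frac{\alpha^{1-2H}\Gamma(2H)}{1-2H}\tau + \frac{\alpha\,\tau^{2H+1}}{2H(1-2H)} - \frac{\alpha^{2-2H}\Gamma(2H)}{1-2H}\tau^2 + c_H\,\alpha^2\,\tau^{2H+2} + o(\tau^{2H+2}),
\]
for an explicit rational function $c_H$ of $H$.

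To conclude, I would Taylor expand $e^{-\alpha\tau} = 1 - \alpha\tau + \tfrac{\alpha^2 \tau^2}{2} - \cdots$, multiply the two series, and collect by order. Using the identity $\nu^2 H\Gamma(2H) = \alpha^{2H}\Var(Y^H_0)$ arising from \eqref{varfOU1}, three algebraic cancellations occur: the coefficient of $\tau$ vanishes, the coefficient of $\tau^{2H+1}$ vanishes, and three distinct contributions at order $\tau^{2H+2}$ (one from $\nu^2 H(2H-1)\cdot c_H \alpha^2 \tau^{2H+2}$ itself, one from the cross-product $(-\alpha\tau)$ with the $\tau^{2H+1}$ coefficient of $\nu^2 H(2H-1) I(\tau)$, and one from $\tfrac{\alpha^2 \tau^2}{2}$ with the $\tau^{2H}$ coefficient of $\nu^2 H(2H-1) I(\tau)$) combine to yield the announced $-\alpha^2\nu^2/(4(H+1)(1+2H))$. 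The coefficients of $\tau^{2H}$ and $\tau^2$ follow similarly, the latter simplifying from three separate contributions to $\tfrac{\alpha^2}{2}\Var(Y^H_0)$.

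The main obstacle is the bookkeeping in this final collection: several contributions at the same order, each of size $O(\alpha^2\nu^2)$ individually, partially cancel to produce the much smaller final coefficient, and intermediate quantities of the form $(1-2H)^{-1}$ must be carefully tracked — they are always compensated by the $(2H-1)$ in the prefactor $\nu^2 H(2H-1)$, so the final expansion is regular across (though formally excluded from) $H=1/2$.
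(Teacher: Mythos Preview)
Your approach is correct and essentially coincides with the paper's. The paper does not give a separate proof of Lemma \ref{shorttime1dim}; it is stated in the appendix as a recalled result, and the remark after Lemma \ref{shorttime} notes that the univariate expansion is recovered from the multivariate one by taking $\alpha_i=\alpha_j=\alpha$, $H_i=H_j$, $\nu_i=\nu_j$, $\rho_{ij}=1$, $\eta_{ij}=0$. Your decomposition $\Cov(Y^H_0,Y^H_\tau)=e^{-\alpha\tau}\Var(Y^H_0)+\nu^2 e^{-\alpha\tau} H(2H-1)I(\tau)$, the expansion of $I(\tau)$, and the subsequent collection of terms are exactly the specialisation of the proof of Lemma \ref{shorttime} to this diagonal case; the cancellations you describe at orders $\tau$, $\tau^{2H+1}$ and $\tau^2$ are the right ones, and your use of the identity $\nu^2 H\Gamma(2H)=\alpha^{2H}\Var(Y^H_0)$ from \eqref{varfOU1} is precisely what makes them work.
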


\section{Some properties of the multivariate fractional Brownian Motion}\label{sec:app_mfbm}

\begin{remark}\label{cov_existence_mfbm}
The functions in \eqref{cH} and \eqref{cH1} define a covariance matrix if and only if the parameters $H_i$, $\rho_{ij}$, and $\eta_{ij}$, $i,j=1,\dots,d$, deliver a Hermitian matrix $Q$ with entries given by $[Q]_{ij}=\left(\Gamma(H_i+H_j+1)\tau_{ij}(1)\right)$, where
$$
\tau_{ij}(1)= 
\begin{cases}
\rho_{ij}\sin\left(\frac{\pi}{2}(H_i+H_j)\right)-\text{i}\eta_{ij}\cos\left(\frac{\pi}{2}(H_i+H_j)\right)		&\text{if } H_i+H_j\ne 1 \\ 
\rho_{ij}-\text{i}\frac{\pi}{2}\eta_{ij}												&\text{if } H_i+H_j= 1 \\ 
\end{cases},
$$
and $\text{i}$ is the imaginary unit, which is positive semi-definite. See Proposition 9 in \cite{multivar} for more.
\end{remark}

Let us focus here on a bivariate fractional Brownian motion (2fBm) $(B^{H_1}, B^{H_2})$, where we denote $\rho=\rho_{12}=\rho_{21}$ and $H=H_1+H_2$. Here, $H_1$ and $H_2$ denote the Hurst indexes of $B^{H_1}$ and $B^{H_2}$ respectively. Additionally, without loss of generality, we set $\sigma_1=\sigma_2=1$. 

\begin{remark}
	
	In the bivariate case, the condition given in Remark \ref{cov_existence_mfbm} reduces to $C_{12}=C_{21}<1$, where $C_{12}$ is called coherency function and is defined as follows: 
	when $H\neq 1$ 
	\begin{equation}\label{domain}
		C_{12}=\frac{\Gamma(H+1)^2}{\Gamma(2H_1+1)\Gamma(2H_2+1)} \frac{\rho^2 \sin^2 \big(\frac{\pi}{2}H\big)+\eta_{12}^2 \cos^2(\frac{\pi}{2}H)}{\sin \pi H_1 \sin \pi H_2}\leq 1,
	\end{equation}  
	and when $H=1$
	\begin{equation}\label{domainH1}
		C_{12}=\frac{1}{\Gamma(2H_1+1)\Gamma(2H_2+1)} \frac{\rho^2 +\frac{\pi^2}{4}\eta_{12}^2 }{\sin \pi H_1 \sin \pi H_2}\leq 1.
	\end{equation}
	Proposition 9 in \cite{multivar} establishes that \eqref{cH} and \eqref{cH1} indeed function as covariance functions when $C_{12}\leq 1$. Therefore, for given $H_1, H_2$, the parameter space of $\rho$ and $\eta_{12}$  is constrained by \eqref{domain} and \eqref{domainH1}. This parameter space forms the interior of the ellipse $\frac{\rho^2}{a^2}+\frac{\eta_{12}^2}{b^2}=1$  centered at the origin, with semi-axes length given by 
	$$
	a=\sqrt{\frac{\Gamma(2H_1+1)\Gamma(2H_2+1)\sin\pi H_1 \sin \pi H_2}{\Gamma(H+1)^2 \sin^2(\frac{\pi}2 H)}},
	$$ 
	and 
	$$
	b=\sqrt{\frac{\Gamma(2H_1+1)\Gamma(2H_2+1)\sin\pi H_1 \sin \pi H_2}{\Gamma(H+1)^2 \cos^2(\frac{\pi}2 H)}}.
	$$
\end{remark}

Let us recall that for a fixed $h\in\R$, the stationary property of the increments of the fBm ensures that the covariance of the increment process $(\overline B^{h, H_i}_t)_{t\in\R}$ with $\overline{B}^{h , H_i}_t=B^{H_i}_{t+h}-B^{H_i}_h$ is  
\begin{align*}
	\Cov(\overline{B}^{h , H_i}_t, \overline{B}^{h , H_i}_s)=\text{Cov}(B^{H_i}_{t+h}-B^{H_i}_h, B^{H_i}_{s+h}-B^{H_i}_h)=\text{Cov}(B^{H_i}_t, B^{H_i}_s).
\end{align*}

Therefore $\overline{B}^{h, H_i}$ is a fBm with Hurst index $H_i$. This property can be extended to the mfBm. 
\begin{lemma}\label{increments}
	Let $(B^{H_1}, \dots, B^{H_d})$ be the mfBm defined in \ref{fBm2}. For fixed $h\in \R$, let $(\overline{B}^{h, H_1},\dots,\overline{B}^{h, H_d})$ be the process defined as
	$$
	(\overline{B}_t^{h, H_1},\dots, \overline{B}_t^{h, H_d})=(B_{t+h}^{H_1}-B_h^{H_1},\dots,B_{t+h}^{H_d}-B_h^{H_d}), \,\,\, t\geq 0.
	$$
	Then $(\overline{B}^{h, H_1},\dots,\overline{B}^{h, H_d})$ is a mfBm as in Definition \ref{fBm2} with Hurst indices $(H_1, \dots, H_d)$. 
\end{lemma}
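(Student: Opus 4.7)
The plan is to verify each of the three defining properties in Definition \ref{fBm2} for the shifted process $(\overline{B}^{h, H_1},\overline{B}^{h, H_2})$. Since this process is a linear functional of the original mfBm, it is automatically centered and jointly Gaussian, so its law is pinned down by its cross-covariance function. The one-dimensional marginal property and the stationarity of joint increments will be immediate from the corresponding properties of the original mfBm: writing $\overline{B}^{h, H_i}_t = B^{H_i}_{t+h}-B^{H_i}_h$, stationarity of fBm increments shows that each $\overline{B}^{h, H_i}$ has the same finite-dimensional laws as $B^{H_i}$ (hence is a fBm of index $H_i$), and the joint increment $(\overline{B}^{h, H_1}_t-\overline{B}^{h, H_1}_s,\overline{B}^{h, H_2}_t-\overline{B}^{h, H_2}_s) = (B^{H_1}_{t+h}-B^{H_1}_{s+h},B^{H_2}_{t+h}-B^{H_2}_{s+h})$ has, by the stationary-increments property of the mfBm, a law that does not depend on $h$.

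The substantive step is the joint self-similarity. Rather than checking the vector-valued scaling relation head-on, I would prove the stronger statement that the shifted process has exactly the same cross-covariance function as the original mfBm:
\begin{equation*}
\Cov(\overline{B}^{h, H_i}_s,\overline{B}^{h, H_j}_t) = \Cov(B^{H_i}_s, B^{H_j}_t), \qquad s,t\in\R,\ i,j\in\{1,2\}.
\end{equation*}
Since both processes are centered Gaussian, identical covariances force identical finite-dimensional laws, and self-similarity then transfers automatically from the original mfBm to the shifted one. To establish the covariance identity, I would expand
\begin{equation*}
\Cov(\overline{B}^{h, H_i}_s, \overline{B}^{h, H_j}_t) = \Cov(B^{H_i}_{s+h}, B^{H_j}_{t+h}) - \Cov(B^{H_i}_{s+h}, B^{H_j}_h) - \Cov(B^{H_i}_h, B^{H_j}_{t+h}) + \Cov(B^{H_i}_h, B^{H_j}_h)
\end{equation*}
and plug in formula \eqref{cH} from Theorem \ref{cov_nfBm}. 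The terms carrying $|s+h|^{H_{ij}}$ and $|t+h|^{H_{ij}}$ cancel pairwise across adjacent covariances, while the four contributions involving $|h|^{H_{ij}}$ collapse via the identity $\tfrac{1}{2}(\rho_{ij}+\sig(h)\eta_{ij})+\tfrac{1}{2}(\rho_{ij}-\sig(h)\eta_{ij})=\rho_{ij}$ against $\Cov(B^{H_i}_h, B^{H_j}_h) = \rho_{ij}|h|^{H_{ij}}$. What remains is exactly the closed form in \eqref{cH} evaluated at the unshifted arguments $s,t$.

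The main obstacle is essentially bookkeeping: one must track the sign factors $\sig(s+h),\sig(t+h),\sig(h)$ appearing in the expansion and check that the coefficients multiplying $|s+h|^{H_{ij}}$, $|t+h|^{H_{ij}}$, $|h|^{H_{ij}}$ all vanish identically, regardless of the relative ordering of $s,t,h$. The singular case $H_{ij}=1$ is handled by the parallel computation with the logarithmic formula \eqref{cH1}; there the cancellations rely on the analogous collapse of the terms $(s+h)\log|s+h|$, $(t+h)\log|t+h|$, $h\log|h|$, leaving precisely the $\rho_{ij}$ and $\eta_{ij}$ pieces indexed by $s$, $t$ and $t-s$.
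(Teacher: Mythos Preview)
Your argument is correct, but the paper's proof is a one-liner that bypasses the explicit computation entirely. The paper simply invokes the stationary-increments property of the \emph{joint} process: since $(B^{H_1},B^{H_2})$ has stationary increments, the vector $(B^{H_1}_{t+h}-B^{H_1}_h,\,B^{H_2}_{s+h}-B^{H_2}_h)$ has the same law as $(B^{H_1}_t-B^{H_1}_0,\,B^{H_2}_s-B^{H_2}_0)=(B^{H_1}_t,B^{H_2}_s)$, hence $\E[\overline{B}^{h,H_1}_t\overline{B}^{h,H_2}_s]=\E[B^{H_1}_tB^{H_2}_s]$ immediately. That single identity, for a centered Gaussian process, forces equality of all finite-dimensional laws and therefore of all three defining properties at once. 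You already used joint stationarity of increments to verify the third item in Definition~\ref{fBm2}; the point is that the very same property also delivers the cross-covariance identity without opening up formula~\eqref{cH} or tracking any $\sig(\cdot)$ factors. Your explicit expansion does work (and confirms the cancellations directly, which is reassuring), but it is doing by hand what the stationary-increments axiom already guarantees abstractly, and it forces you to treat the case $H_{ij}=1$ separately, whereas the paper's argument is indifferent to the specific form of the covariance.
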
 
\begin{proof}
	From the stationarity of the increments, we have that, $\forall i,j=1,\dots,d$,
	\begin{align*}
		&\E[\overline{B}^{h , H_i}_t \overline{B}^{h, H_j}_s]=\E[  (B_{t+h}^{H_i}-B_h^{H_i})(B_{s+h}^{H_j}-B^{H_j}_h)  ]=\E[B_t^{H_i} B_s^{H_j}].
	\end{align*}
	It follows that, for every $h\in \R$, the process $(\overline{B}^{h,H_1}, \dots, \overline{B}^{h,H_d})$ is a mfBm as in Definition \ref{fBm2}.
\end{proof}

Let us finally recall the following theorem, which provides a moving average representation of $(B^{H_1},\dots, B^{H_d})$.
\begin{theorem}\label{MovAverageFBM}[Theorem 8 in \cite{multivar}]
	Let $(B^{H_1}, \dots, B^{H_d})$ be the mfBm in Definition \ref{fBm2}. For $(H_1, \dots, H_d) \in (0,1)^d$ and $H_i\neq \frac 12,\i=1,\dots,d$ there exists $M^+$, $M^{-}$ two $d\times d$ real matrices such that, for $i=1,\dots,d$, 
	\begin{align}\label{movinRep}
		&B^{H_i}_t=\sum_{j=1}^d\int_{\R} M_{i,j}^+((t-x)_+^{H_i-\frac 12}-(-x)_{+}^{H_i-\frac 12})+M_{i,j}^-((t-x)_-^{H_i-\frac 12}-(-x)_{-}^{H_i-\frac 12}) W_j(dx)
	\end{align}
	where $W=(W_1,\dots,W_d)$ is a Gaussian white noise with zero mean, independent components and covariance $\E[W_i(dx)W_j(dx)]=\delta_{ij} dx$. 
\end{theorem}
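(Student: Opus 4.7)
The plan is constructive: I would first write down, for arbitrary $2\times 2$ real matrices $M^+,M^-$, the candidate bivariate Gaussian process
\[
X^{(i)}_t = \sum_{j=1}^2 \int_\R \bigl[M^+_{ij}\, K^+_i(t,x) + M^-_{ij}\, K^-_i(t,x)\bigr]\, W_j(dx), \qquad i=1,2,
\]
with kernels $K^\pm_i(t,x) := (t-x)_\pm^{H_i-1/2}-(-x)_\pm^{H_i-1/2}$; then compute its covariance; and finally solve for $M^\pm$ so that the result matches the target covariance prescribed by Theorem \ref{cov_nfBm}. For $H_i\in(0,1)\setminus\{1/2\}$ one checks $K^\pm_i(t,\cdot)\in L^2(\R)$, so the white-noise integrals are well defined and $X^{(i)}$ is a centered Gaussian process; moreover each marginal $X^{(i)}$ is automatically $H_i$-self-similar with stationary increments and thus a univariate fBm.

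Using the $L^2$-isometry for white-noise integrals and the independence of $W_1,W_2$ gives
\[
\Cov\!\bigl(X^{(i)}_t,X^{(j)}_s\bigr) = \sum_{\varepsilon_1,\varepsilon_2\in\{+,-\}} \bigl(M^{\varepsilon_1}(M^{\varepsilon_2})^{\top}\bigr)_{ij} \int_\R K^{\varepsilon_1}_i(t,x)\,K^{\varepsilon_2}_j(s,x)\,dx.
\]
The four scalar kernel integrals are classical: via Beta-function evaluations of $\int_\R (a-x)_\pm^p(b-x)_\pm^q\,dx$, the pairings with $\varepsilon_1=\varepsilon_2$ yield a multiple of the symmetric combination $|t|^{H_{ij}}+|s|^{H_{ij}}-|t-s|^{H_{ij}}$, whereas those with $\varepsilon_1\neq\varepsilon_2$ yield a multiple of the antisymmetric combination carrying the $\operatorname{sgn}$-prefactors appearing in \eqref{cH}. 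The trigonometric coefficients that drop out ($\sin\pi H_i$, $\sin(\pi H_{ij}/2)$, $\cos(\pi H_{ij}/2)$) are exactly the ones visible in the coherence condition \eqref{domain}.

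This reduces the existence of a representation to the following algebraic question: setting
\[
\Sigma := M^+(M^+)^{\top}+M^-(M^-)^{\top}, \qquad \Delta := M^-(M^+)^{\top}-M^+(M^-)^{\top},
\]
can one realize the prescribed $\Sigma$ (a $2\times 2$ real symmetric matrix encoding $\sigma_i$ and $\rho_{ij}$) and the prescribed $\Delta$ (a $2\times 2$ real antisymmetric matrix encoding $\eta_{ij}$)?

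The main obstacle is this feasibility step. Packaging the unknowns as a single complex matrix $M := M^+ - \mathrm{i}\, M^-$, the system becomes $MM^* = \Sigma + \mathrm{i}\,\Delta$, so the task is to extract a matrix root of the Hermitian matrix $\Sigma+\mathrm{i}\,\Delta$. This is possible exactly when $\Sigma+\mathrm{i}\,\Delta\succeq 0$, and a direct $2\times 2$ determinant computation collapses this Hermitian positivity to the coherence inequality $C_{12}\le 1$ of \eqref{domain}/\eqref{domainH1}. Since $(\rho_{12},\eta_{12})$ come from an actual $2$-fBm they automatically satisfy this bound, so a Cholesky-type factor of $\Sigma+\mathrm{i}\,\Delta$ exists and its real and imaginary parts provide the required $M^\pm$. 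The restriction $H_i\neq 1/2$ is intrinsic: at $H_i=1/2$ the kernels $(t-x)_\pm^{H_i-1/2}$ both degenerate into indicators and $K^+_i(t,\cdot),K^-_i(t,\cdot)$ become linearly dependent, so the kernel class is no longer rich enough to realize the full parameter range.
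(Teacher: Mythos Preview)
The paper does not prove this statement: it is quoted as Theorem~8 of \cite{multivar} and used as a black box in Appendix~C. So there is no in-house argument to benchmark against; the relevant comparison is with the original proof in \cite{multivar}, and your constructive scheme (impose the moving-average form with free $M^\pm$, compute the covariance via the white-noise isometry, match against Theorem~\ref{cov_nfBm}) is exactly the strategy used there.

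One genuine inaccuracy in your write-up: the assertion that the same-sign pairings $\int K^{\varepsilon}_iK^{\varepsilon}_j$ produce a pure multiple of the symmetric combination and the mixed pairings a pure multiple of the antisymmetric one is false once $H_i\neq H_j$. For $H_i=H_j$ the integral $\int K^+_i(t,\cdot)K^+_j(s,\cdot)$ is symmetric in $(t,s)$ and your split is correct, but for $H_i\neq H_j$ it is not, and each of the four integrals contributes to \emph{both} the $\rho$-part and the $\eta$-part of \eqref{cH}, with $(H_i,H_j)$-dependent Beta coefficients. Using the reflection identity $K^-_i(t,x)=K^+_i(-t,-x)$ one finds the correct structure
\[
c_1(A+D)_{ij}+c_3(B+C)_{ij}=\tfrac{\sigma_i\sigma_j}{2}\rho_{ij},\qquad
c_2(A-D)_{ij}+c_4(B-C)_{ij}=\tfrac{\sigma_i\sigma_j}{2}\eta_{ij},
\]
with $A=M^+(M^+)^{\!\top}$, $D=M^-(M^-)^{\!\top}$, $B=M^+(M^-)^{\!\top}$, $C=B^{\top}$ and $c_1,\dots,c_4$ explicit functions of $(H_i,H_j)$; in general $c_2,c_3\neq 0$. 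Consequently your clean reduction to $MM^*=\Sigma+\mathrm{i}\Delta$ does not hold as stated in the time domain. It \emph{is} the right organising principle --- it is precisely the spectral-domain factorisation of the Hermitian cross-spectral density, whose positive semidefiniteness is the coherence bound \eqref{domain} --- but to make the time-domain argument rigorous you must either (i) pass to the spectral side, factor there, and Fourier-invert the factor (this is how \cite{multivar} proceeds), or (ii) keep track of the four Beta coefficients and show that after a diagonal row rescaling of $M^\pm$ the residual system is a bona fide Hermitian factorisation. Either route closes the gap; neither is hard, but the work cannot be skipped.
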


\bibliographystyle{abbrv}
\bibliography{roughvol}
\end{document}